    \newcommand{\BA}{{\mathbb {A}}} 
    \newcommand{\BC}{{\mathbb {C}}}
     \newcommand{\sH}{{\mathscr {H}}}
    \newcommand{\CC}{{\mathcal {C}}}
    \newcommand{\CM}{{\mathcal {M}}}
    \newcommand{\CS}{{\mathcal {S}}} 
    \newcommand{\CW}{{\mathcal {W}}}
     \newcommand{\fo}{{\mathfrak{o}}}  \newcommand{\fp}{{\mathfrak{p}}}
    \newcommand{\RU}{{\mathrm {U}}}
    \newcommand{\diag}{{\mathrm {diag}}}\newcommand{\Mat}{{\mathrm {Mat}}}
    \newcommand{\lenth}{{\mathrm {\lenth}}}
     \newcommand{\GL}{{\mathrm{GL}}}
    \newcommand{\Hom}{{\mathrm{Hom}}} 
    \newcommand{\Ind}{{\mathrm{Ind}}}
    \renewcommand{\Re}{{\mathrm{Re}}}
     \newcommand{\rht}{{\mathrm{ht}}}
\renewcommand{\mod}{\ \mathrm{mod}\ }
\newcommand{\Supp}{\mathrm{Supp}}
 \newcommand{\SL}{{\mathrm{SL}}}
\newcommand{\vol}{{\mathrm{vol}}}  
\newcommand{\Char}{{\mathrm{Char}}}
 \newcommand{\Sp}{{\mathrm{Sp}}}
    \newcommand{\bx}{{\bf {x}}}   \newcommand{\bm}{{\bf {m}}}   \newcommand{\bn}{{\bf {n}}} \newcommand{\bt}{{\bf {t}}}
    \newcommand{\bW}{{\bf {W}}}
    \newcommand{\wt}{\widetilde}  
    \newcommand{\pair}[1]{\langle {#1} \rangle}
    \newcommand{\wpair}[1]{\left\{{#1}\right\}}
    \newcommand{\ov}{\overline}
    \newcommand{\incl}{\hookrightarrow}
     \newcommand{\ra}{\rightarrow}
    \theoremstyle{plain}
       \newtheorem*{theorem*}{Local Converse Theorem for $\Sp_{2r}$}
    \newtheorem{thm}{Theorem}[section] \newtheorem{cor}[thm]{Corollary}
    \newtheorem{lem}[thm]{Lemma}  \newtheorem{prop}[thm]{Proposition}
    \numberwithin{equation}{section}
\title{A local converse theorem for $\Sp_{2r}$}
\author{Qing Zhang}
\subjclass[2010]{11F70, 22E50}
\keywords{gamma factors, Howe vectors, local converse theorem}
\address{School of Mathematics, Sun Yat-Sen University, Guangzhou, China, 510275}
\email{zhang.qing@yahoo.com}
\begin{document}

\maketitle
\begin{abstract}
In this paper, we prove the local converse theorem for $\Sp_{2r}(F)$ over a $p$-adic field $F$. More precisely, given two irreducible supercuspidal representations of $\Sp_{2r}(F)$ with the same central character such that they are generic with the same additive character and they have the same gamma factors when twisted with generic irreducible representations of $\GL_n(F)$ for all $1\le n\le r$, then these two representations must be isomorphic. Our proof is based on the local analysis of the local integrals which define local gamma factors. A key ingredient of the proof is certain partial Bessel function property developed by Cogdell-Shahidi-Tsai recently. The same method can give the local converse theorem for $\RU(r,r)$. 
\end{abstract}


\section{Introduction}
Let $F$ be a $p$-adic field and $\psi$ be a non-trivial additive character of $F$. Let $G=\Sp_{2r}(F)$ be the symplectic group of rank $r$ over $F$ and let $U$ be the maximal unipotent subgroup of a fixed Borel subgroup of $G$. We can define a generic character $\psi_U$ of $U$ from $\psi$.

The purpose of this paper is to prove the following 
\begin{theorem*}
 Let $\pi,\pi_0$ be two irreducible $\psi_U$-generic supercuspidal representations of $\Sp_{2r}(F)$ with the same central character. If $\gamma(s,\pi\times \tau,\psi)=\gamma(s,\pi_0\times \tau,\psi) $ for all irreducible generic representations $\tau$ of $\GL_k(F)$ and for all $k$ with $1\le k\le r$, then $\pi\cong \pi_0$.
\end{theorem*}

Here the local $\gamma$-factors are defined from the local functional equations of local zeta integrals which were considered in \cite{GePS87, GiRS97, GiRS98}. In \cite{Ka}, it is proved that these gamma factors agree with the local gamma factors arising from Langlands-Shahidi method. In particular, the local gamma factors satisfy multiplicativity. Thus in the conditions of the above theorem, one only needs to twist by irreducible generic supercuspidal representations of $\GL_k(F)$. 

With similar proof, the above local converse theorem also holds for quasi-split unitary group $\RU(r,r)$. Here the local gamma factors are defined from the local functional equations of local zeta integrals which were considered in \cite{BAS}.

The above theorem was conjectured by D. Jiang, see \cite{Jng, JngN}. To the author's understanding, by considering local descent map from $\GL_{2r+1}$ to $\Sp_{2r}$, Jiang and Soudry \cite[Theorem A7]{JngS} can reduce the above theorem to the Jacquet's local converse conjecture for $\GL_n$, which was recently proved by Chai \cite{Ch} and Jacquet-Liu \cite{JL} independently. But it seems that the proof of Jiang and Soudry is still not published. On the other hand, modulo the results of \cite{Ch, JL}, the above local converse theorem is equivalent to the irreducibility of the local descent from $\GL$ to $\Sp$ which is claimed in \cite[Theorem A7]{JngS}. Thus our proof of the above theorem can serve as an alternative proof of the irreducibility of the local descent map. In \cite{ST}, Soudry and Tanay considered the local descent from $\GL_{2r}$ to $\RU(r,r)$ and proved the descent is irreducible, which could reduce the above local converse theorem to the $\GL$ case. The details of a proof of the local converse theorem for $\RU(r,r)$ using descent recently appeared in \cite{M}, where the local gamma factors are Shahidi local gamma factors.

In any case, it seems valuable to give a proof of the local converse theorem for $\Sp_{2r}$ and $\RU(r,r)$ within the context of these groups themselves, i.e., without utilizing the local converse theorem for $\GL$. Furthermore, the method itself of our proof seems interesting and should be applicable to more local problems. 

We now briefly discuss the method we used. Certain local converse theorems for small symplectic and unitary groups were considered in \cite{ChZ, Zh1,Zh2,Zh3}, where the main tools are Howe vectors developed by Baruch \cite{Ba95,Ba97}. The difficulty to generalize the proof to higher rank groups is certain ``stability" property of Bessel functions, see Lemma \ref{lem4.3} (3) and the remark after it. Recently, Cogdell-Shahidi-Tsai \cite{CST17} developed a theory of partial Bessel functions which enabled them to prove stability of the exterior square local gamma factors for $\GL_n$. It turns out the method used in \cite{CST17} is general enough such that it can be applied to our case, which can overcome the difficulty mentioned above. As mentioned above, we believe the method used here might have more local applications. 

This paper is organized as follows. In section \ref{sec3}, we review the definitions of the local gamma factors. In section \ref{sec4}, we review the main tools we will use later: Howe vectors and the theory of Cogdell-Shahidi-Tsai on partial Bessel functions. In section \ref{sec5}, we prepare some materials which will be used in the proof of the main theorem. The proofs of the local converse theorem are given in sections \ref{sec6},\ref{sec7}. In the final section \ref{sec8}, we briefly discuss the local converse theorem for $\RU(r,r)$.

\section*{Acknowledgment }

I would like to thank my advisor Jim Cogdell for many useful discussions, constant support and encouragement over the years. I also would like to thank Friedrich Kopp for answering me questions on mathoverflow.net, which gave the proof of Proposition \ref{prop2.5} (2). I appreciate Baiying Liu for pointing to me the reference \cite{JngS}, and Dani Szpruch for useful communications on metaplectic groups. I thank Dihua Jiang, David Soudry, and Freydoon Shahidi for their interest and encouragement on this work. I am very grateful to the anonymous referee for his/her careful reading of this article and many useful suggestions which helped to improve this paper both linguistically and mathematically. 

\section{Notations}
Let $F$ be a $p$-adic field, i.e., a local field of characteristic 0. Denote by $\fo, \fp,\varpi$ the ring of integers of $F$, the maximal ideal of $\fo$, and a fixed generator of $\fp$ respectively.\\

Given two positive integers $n,m$, denote by $I_n$ the identity matrix of rank $n$, and by $\Mat_{m\times n}(F)$ the ring of $m\times n$ matrices with coefficients in $F$.\\

Let $r$ be a positive integer, $J_r=\begin{pmatrix} &&&1\\&&1&\\ &\iddots&&\\ 1&&&\end{pmatrix}$, and $w^{r}_{\ell}=\begin{pmatrix}&J_r\\ -J_r& \end{pmatrix}$. Let $F^{2r}$ be the space of row vectors of dimension $2r$. We endow $F^{2r}$ the symplectic form $\pair{~,~}$ defined by $w_{\ell}^r$, i.e., 
$$\pair{v_1,v_2}=2v_1\cdot w_{\ell}^r \cdot {}^t\!v_2, \quad v_1,v_2\in F^{2r}. \quad  \footnote{ Unlike the literature \cite{GiRS97, GiRS98, Ka}, we add an extra $2$ in front of the symplectic form, which will be used to simplify certain Weil representation formulas.}$$
The group $\Sp_{2r}(F)$ is defined to be the isometry group of $\pair{~,~}$, i.e., 
$$\Sp_{2r}(F)=\wpair{g\in \GL_{2r}(F): g\cdot w^{r}_{\ell}\cdot {}^t\! g=w^{r}_{\ell}}.$$ 
For $a\in \GL_r(F)$ and $b\in \Mat_{r\times r}(F)$ with $ bJ_r=J_r {}^t\!b $, we denote
$$ \bm_r(a)=\begin{pmatrix} a&\\ &a^*\end{pmatrix}\in \Sp_{2r}(F), \bn_r(b)=\begin{pmatrix}I_r& b\\ &I_r \end{pmatrix}, $$
where $a^*=J_r{}^t\!a^{-1} J_r$.

Let $M_r=\wpair{\bm_r(a),a\in \GL_r(F)}, N_r=\wpair{\bn_r(b): b\in \Mat_{r\times r}(F), J_r{}^t\! b =bJ_r},$ and $P_r=M_rN_r$, which is the Siegel parabolic subgroup of $\Sp_{2r}(F)$.

For $n<r$, denote $w_{r-n,n}=\begin{pmatrix}&I_{r-n} &&\\ I_n &&&\\ &&&I_n\\ &&I_{r-n}& \end{pmatrix}$, and $w_n=\begin{pmatrix} &I_n\\ -I_n \end{pmatrix}\in \Sp_{2n}(F)$. For $n<r$, we embed $\Sp_{2n}(F)$ into $\Sp_{2r}(F)$ by $g\mapsto \diag(I_{r-n}, g, I_{r-n})$. We will identify elements $g\in \Sp_{2n}(F)$ with elements of $\Sp_{2r}(F)$ under this embedding. By this convention, we have 
$$\bm_{n}(a)=\bm_{r}\begin{pmatrix}I_{r-n} &\\ &a \end{pmatrix}, a\in \GL_{n}(F).$$

We denote
 $$\tilde w^r_n=w_{r-n,n}^{-1} w_n w_{r-n,n}=\begin{pmatrix} &&I_n\\ &I_{2(r-n)} & \\ -I_n&& \end{pmatrix}.$$
 
 Let $Q_n^r=L_n^r V_n^r$ be the parabolic subgroup of $\Sp_{2r}(F)$ with Levi subgroup
 $$L_n^r=\wpair{\bm_r(\diag(a,a_{n+1},\dots,a_r)), a\in \GL_n(F), a_i\in \GL_1(F),n+1\le i\le r}.$$
 Note that $w_{r-n,n}^{-1}M_nw_{r-n,n}\subset L_n^r, w_{r-n,n}^{-1}P_n w_{r-n,n}\subset Q_n^r$. In fact, we have 
 $$w_{n-r,n}^{-1}\bm_n(a)w_{n-r,n}=\bm_r(\diag(a,1,\dots,1))\in L_n^r.$$
 For $a\in \GL_n(F)$, we will write $\bt_n(a)=\bm_r(\diag(a,1,\dots,1))\in L_n^r$.
 
 Let $B^r=A^rU^r$ be the upper triangular Borel subgroup of $\Sp_{2r}(F)$ with maximal torus $$A^r=\wpair{\diag(a_1,a_2,\dots,a_r,a_r^{-1},\dots,a_1^{-1}),a_i\in F^\times, 1\le i\le r},$$ and maximal unipotent $U^r$. We will also write $U^r$ as $U_{\Sp_{2r}}$ when we want to emphasize its dependence on the group $\Sp_{2r}$. For an integer $i$ with $1\le i\le r$, let $\alpha_i$ be the simple root defined by 
 $$\alpha_i( \diag(a_1,a_2,\dots,a_r,a_r^{-1},\dots,a_1^{-1}))=a_i/a_{i+1}, 1\le i\le r-1,$$
 and 
 $$\alpha_r( \diag(a_1,a_2,\dots,a_r,a_r^{-1},\dots,a_1^{-1}))=a_r^2. $$
 Let $\Delta^r=\wpair{\alpha_i,1\le i\le r}$ be the set of simple roots.  Any root $\beta$ of $\Sp_{2r}(F)$ can be uniquely written as $\beta=\sum_{i=1}^r c_i\alpha_i,$ with $c_i\in \wpair{0,\pm1,\pm2}$. The height of $\beta$ is defined to be $\rht(\beta)=\sum_i c_i$.

 For a root $\beta$ of $\Sp_{2r}(F)$, let $U^r_\beta$ be the root space of $\beta$ and $\bx_\beta:F\ra U_\beta$ be a fixed isomorphism.

\section{Review of the definition of the local gamma factors}\label{sec3}
In this section, we will give a review of definitions of local gamma factors for $\Sp_{2r}\times \GL_n$ following \cite{Ka}.

\subsection{The metaplectic group $\wt{\Sp}_{2n}(F)$}
Let $\wt{\Sp}_{2n}(F)$ be the metaplectic double cover of $\Sp_{2n}(F)$ defined by the Rao cocycle \cite{Rao93}, which is a map $c: \Sp_{2n}(F)\times \Sp_{2n}(F)\ra \wpair{\pm 1}$. The group $\wt{\Sp}_{2n}(F)$ is then realized as $\Sp_{2n}(F)\times \wpair{\pm 1}$ with multiplication 
$$(g_1,\epsilon_1)(g_2,\epsilon_2)=(g_1g_2, \epsilon_1 \epsilon_2 c(g_1,g_2)).$$
We state here certain formulas we need for Rao cocycles. Let $\bx: \Sp_{2n}(F)\ra F^\times/F^{\times,2}$ be the map defined in \cite[Lemma 5.1]{Rao93}. The Rao cocycle is defined in \cite[Theorem 5.3]{Rao93}. 

 A typical element $\tilde g\in \wt{\Sp}_{2n}(F)$ is of the form $(g,\epsilon), g\in \Sp_{2n}(F), \epsilon\in \wpair{\pm 1}$. An element $g\in \Sp_{2n}(F)$ is identified with $(g,1)\in \wt{\Sp}_{2n}(F)$ (the map $g\mapsto (g,1)$ is not a group homomorphism). The map $(g,\epsilon)\mapsto g$ defines the double cover projection map $\wt{\Sp}_{2n}(F)\ra \Sp_{2n}(F)$. For a subset $S\subset \Sp_{2n}(F)$, we denote by $\widetilde S$ the inverse image of $S$ under the projection $\wt{\Sp}_{2n}(F)\ra \Sp_{2n}(F)$.

Let $\psi$ be a non-trivial additive character of $F$. For $a\in F^\times$, let $\psi_a$ be the character of $F$ defined by $\psi_a(x)=\psi(ax)$.  Let $\gamma(\psi)$ be the Weil index of $x\mapsto \psi(x^2)$, and let
$$\gamma_{\psi}(a)=\frac{\gamma(\psi_a)}{\gamma(\psi)}, a\in F^\times,$$
see \cite[Appendix]{Rao93}.  We have the property $\gamma_{\psi^{-1}}(x)=\gamma_{\psi}^{-1}(x),x\in F^\times$. 

Let $\tilde F^\times$ be the double cover of $F^\times$ defined by the Hilbert symbol, i.e., $\tilde F^\times =F^\times \times \wpair{\pm 1}$ with multiplication $(a_1,\epsilon_1)\cdot (a_2, \epsilon_2)=(a_1a_2, \epsilon_1 \epsilon_2 (a_1,a_2)_F), a_1,a_2\in F^\times, \epsilon_1,\epsilon_2\in F^\times$. Here $(~,~)_F$ is the Hilbert symbol of $F$. 

The function $\gamma_\psi$ satisfies the property $\gamma_\psi(ab)=\gamma_\psi(a)\gamma_\psi(b)(a,b)_F$, see \cite[Theorem A.4]{Rao93}. Thus $\gamma_\psi$ extends to a character of $\tilde F^\times$ by $\gamma_\psi(a,\epsilon)=\epsilon \gamma_\psi(a)$.

\subsection{Weil representations of $\wt{\Sp}_{2n}(F)$}
Let $F^{2n}$ be the space of $2n$-dimensional row vectors. Recall that $F^{2n}$ is endowed with a symplectic form $\pair{~,~}$ defined by $\pair{v_1,v_2}=2v_1\cdot w_{\ell}^n \cdot {}^t\! v_2$. 

 Let $ \sH_n=F^{2n}\oplus F$ be the $(2n+1)$-dimension Heisenberg group. The group operation is defined by 
$$[v_1,t_1]\cdot [v_2, t_2]=\left[v_1+v_2, t_1+t_2+\frac{1}{2}\pair{v_1,v_2} \right], v_1,v_2\in F^{2n}, t_1,t_2\in F.$$
The group $\sH_n$ can be embedded into $ U_{\Sp_{2n+2}}$ by 
$$[(x,y),t ]\mapsto \begin{pmatrix}1&x&y &t\\ &I_n&&J_n{}^t\! y \\ &&I_n&-J_n{}^t\! x\\ &&&1 \end{pmatrix}, (x,y\in F^n, t\in F).$$
Denote $X_n=\wpair{[(x,0),0]: x\in F^n}$ and $Y_n=\wpair{[(0,y),0], y\in F^n}$. For $n<r$, we will identify $X_n, Y_n$ with a subgroup of $\Sp_{2r}$ under the above identification and the embedding $\Sp_{2n+2}\ra \Sp_{2r}$. 

Let $\psi$ be a nontrivial additive character $F$. Let $\omega_\psi$ be the Weil representation of $\sH_n\rtimes \wt{\Sp}_{2n}$ realized on $\CS(F^n)$, the Bruhat-Schwarts functions on the row space $F^n$. We have the following formulas:
\begin{align*}
\omega_\psi([(x,0),z])\phi(\xi)&=\psi(z)\phi(\xi+x), \\
\omega_\psi([(0,y),0])\phi(\xi)&=\psi(2\xi J_n {}^t\! y)\phi(\xi),\\
\omega_\psi((\bm_n(a),\epsilon))&=\epsilon \gamma_\psi(\det(a))|\det(a)|^{1/2}\phi(\xi a),\\
\omega_\psi((\bn_n(b),\epsilon))\phi(\xi)&=\epsilon\psi(\xi J_n {}^t\! b {}^t\! \xi)\phi(\xi),\\
\omega_\psi(w_n)\phi(\xi)&=\beta_\psi \int_{F^n} \phi(x)\psi(2x J_n {}^t\! \xi)dx.
\end{align*}
Here $\phi\in \CS(F^r)$, and $\beta_\psi$ is certain fixed eighth root of unity. Recall that $w_n=\begin{pmatrix} &I_n\\ -I_n& \end{pmatrix}$. These formulas look a little bit different from that in \cite{GiRS98, Ka} since we used a little bit different symplectic form. For these formulas, see \cite{Ku}.

\subsection{Genuine induced representation of $\wt{\Sp}_{2n}(F)$}
Recall that $P_n=M_nN_n$ is the Siegel parabolic subgroup of $\Sp_{2n}(F)$ and $\wt{M}_n$ is the inverse image of $M_n$ in $\wt{\Sp}_{2n}(F)$. Then $\wt{M}_n$ is the double cover of $M_n\cong \GL_n(F)$ defined by the Hilbert symbol $(~,~)_F$. Let $\tau$ be an irreducible representation of $\GL_n(F)\cong M_n$, and $\psi$ be a nontrivial additive character of $F$. Let $s\in \BC$, and let $\tau_s\otimes \gamma_\psi^{-1}$ be the genuine representation of $\wt{M}_n$ defined by 
$$\tau_s\otimes \gamma_\psi^{-1}((a,\epsilon))=\epsilon \gamma_\psi^{-1}(\det(a))|\det(a)|^s\tau(a), a\in \GL_n(F), \epsilon \in \wpair{\pm 1}.$$
For $s\in \BC$, we consider the induced representation 
$$\tilde I(s,\tau,\psi)=\Ind_{\wt{P_n}}^{\wt{\Sp}_{2n}(F)}(\tau_{s-1/2}\otimes \gamma_\psi^{-1}) .$$
A typical element $f_s\in \tilde I(s,\tau,\psi)$ is a smooth function from $\wt{\Sp}_{2n}(F)$ to the space of $\tau$, which satisfies the relation 
$$f_s((\bm_n(a), \epsilon) u \tilde g)=\epsilon \delta_{P_n}^{1/2}(a) |\det(a)|^{s-1/2}\gamma_\psi^{-1}(\det(a)) \tau(a)f_s(\tilde g), $$
for $a\in \GL_n(F), \epsilon \in \wpair{\pm 1},u\in N_n, \tilde g\in \wt{\Sp}_{2n}(F)$. Here $\delta_{P_n}$ is the modulus character of $P_n$. 

Let $\psi_{U_{\GL_n}}$ be the generic character of the standard upper triangular unipotent subgroup $U_{\GL_n}$ of $\GL_n(F)$ defined by 
$$\psi_{U_{\GL_n}}( u)=\psi\left(\sum_{i=1}^{n-1} u_{i,i+1}\right). $$
Assume that $\tau$ is a generic irreducible representation of $\GL_n(F)$. We fix a nonzero Whittaker functional 
$$\lambda\in \Hom_{U_{\GL_n}}(\tau,\psi_{U_{\GL_n}}).  $$
Given an element $f_s\in \tilde I(s,\tau,\psi)$, we consider the $\BC$-valued function on $\wt{\Sp}_{2n}(F)\times \GL_n(F)$:
$$\xi_{f_s}(\tilde g,a)=\lambda(\tau(a)f_s(\tilde g)).$$
From the quasi-invariance of $f_s$, we have the following relation
$$\xi_{f_s}( (\bm_n(u),\epsilon) u' \tilde g,I_n)= \epsilon\psi_{U_{\GL_n}}(u)\xi_{f_s}(\tilde g,I_n),$$
for $u\in U_{\GL_n}, u'\in N_n, \epsilon \in \wpair{\pm 1}$. We denote 
$$\wt{V}(s,\tau,\psi)=\wpair{\xi_{f_s}, f_s\in \tilde I(s,\tau,\psi)}.$$
Here we remark that in the notation $\wt{V}(s,\tau,\psi)$, there are two places depending on $\psi$. The first one is $\gamma_\psi$ and the second one is the fixed Whittaker functional $\lambda\in \Hom_{U_{\GL_n}}(\tau,\psi_{U_{\GL_n}})$. If we replace $\psi$ by a different $\psi'$, one should change $\psi$ to $\psi'$ in both places. \\

Let $\tau^*$ denote the representation of $\GL_n(F)$ defined by $\tau^*(a)=\tau(a^*)$, where $a^*=J_n {}^t\! a^{-1}J_n\in \GL_n(F)$. Note that $\tau^*$ is isomorphic to the contragredient representation of $\tau$.

There is a (standard) intertwining operator $M(s,\tau,\psi): \wt{V}(s,\tau,\psi)\ra \wt{V}(1-s,\tau^*, \psi)$ defined by 
$$M(s,\tau,\psi)\xi_s(\tilde g, a)=\int_{N_n}\xi_s(w_n^{-1}u \tilde g, d_na^*)du,$$
where $d_n=\diag(-1,1,\dots,(-1)^n)\in \GL_n(F)$. It is a standard fact that this intertwining operator $M(s,\tau,\psi)$ is well-defined for $\Re(s)>>0$ and has a meromorphic continuation. 

\subsection{The local zeta integral and local gamma factors}
Let $\psi$ be a nontrivial additive character of $F$. Let $U=U^r$ be the unipotent subgroup of the upper triangular Borel subgroup of $\Sp_{2r}(F)$. We define a generic character $\psi_U$ of $U$ by 
$$\psi_U(u)=\psi\left(\sum_{i=1}^r u_{i,i+1}\right), u=(u_{i,j})\in U.$$

Let $\pi$ be an irreducible $\psi_U$-generic representation of $\Sp_{2r}(F)$ and let $\CW(\pi,\psi_U)$ be its $\psi_U$-Whittaker model. Let $n$ be a positive integer with $1\le n\le r$ and $\phi\in \CS(F^n)$. Let $\tau$ be an irreducible generic representation of $\GL_n(F)$ and $\xi_s\in \wt{V}(s,\tau,\psi^{-1})$. Then for $x\in X_n$, the function on $\wt{\Sp}_{2n}(F)$
$$\tilde g\mapsto \omega_{\psi^{-1}}(\tilde g)\phi(x) \xi_s(\tilde g, I_n)$$
descends to a function on $\Sp_{2n}(F)$. For $W\in \CW(\pi,\psi_U)$, the Shimura type integral for $\pi\times \tau$ is defined by 
\begin{align}\label{eq1.1}
&\Psi(W,\phi,\xi_s)\\
=&\left\{\begin{array}{lll}\int_{U^n\setminus \Sp_{2n}}\int_{R^{r,n}}\int_{X_n} W(w_{r-n,n}^{-1}(rxg)w_{r-n,n})\omega_{\psi^{-1}}(g)\phi(x)\xi_s(g,I_n)dxdrdg, & n<r,\\
                                                              \int_{U^r\setminus \Sp_{2r}} W(g)\omega_{\psi^{-1}}(g)\phi(e_r)\xi_s(g,I_r)dg, & n=r.        \end{array}\right.\nonumber \end{align}

Here $$R^{r,n}=\wpair{\bm_r \begin{pmatrix} I_{r-n-1} & &y\\ &1&\\ &&I_n \end{pmatrix} \in \Sp_{2r}(F)},$$
and $e_r=(0,\dots,0,1)\in F^r$. We apologize to use $r$ twice: one for the rank in $\Sp_{2r}$, the other one for element in $R^{r,n}$. Hopefully the meaning of $r$ is clear from the context. \\
\noindent \textbf{Remark:} These Shimura type integrals were first considered in \cite{GeJ78} when $n=r=1$, and then in \cite{GePS87} when $n=r\ge 1$. The most general cases were considered in \cite{GiRS97} (when $n=1$) and \cite{GiRS98} (for general $n,r$). Note that when $r<n$, similar Shimura type integrals can be defined. Since we are not going to use those cases, we did not include these integrals here. \\

The basic properties of these integrals about convergence, non-vanishing and meromorphic continuation were dealt with in \cite{GiRS97, GiRS98}. 

 \begin{thm}
There exists a meromorphic function $\gamma(s,\pi\times \tau,\psi)$ such that 
$$\Psi(W,\phi, M(s,\tau,\psi)\xi_s)=\gamma(s,\pi\times \tau,\psi)\Psi(W,\phi, \xi_s),$$
for all $W\in \CW(\pi,\psi_U), \phi \in \CS(F^n), \xi_s\in \wt{V}(s,\tau,\psi)$. 
\end{thm}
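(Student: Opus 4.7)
The plan is to deduce the functional equation by the standard uniqueness-of-trilinear-forms argument of Piatetski-Shapiro and Rallis. First I record the analytic input already established in \cite{GePS87, GiRS97, GiRS98}: for $\Re(s)$ sufficiently large, $\Psi(W,\phi,\xi_s)$ converges absolutely, is a rational function in $q^{-s}$, and admits meromorphic continuation to all of $\BC$; the intertwining operator $M(s,\tau,\psi)$ enjoys the same properties. Consequently both $\Psi(W,\phi,\xi_s)$ and $\Psi(W,\phi, M(s,\tau,\psi)\xi_s)$ make sense as meromorphic functions of $s$, and it suffices to prove the functional equation for $\Re(s)$ in some open strip where Bernstein's continuation principle then forces the proportionality constant to be rational in $q^{-s}$.

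Next I would fix $s$ outside a discrete bad set and view each side as a trilinear form on
$$\CW(\pi,\psi_U)\otimes \CS(F^n)\otimes \wt V(s,\tau,\psi^{-1}).$$
Both satisfy the same equivariance: $(U^n,\psi_U)$-quasi-invariance inherited from the Whittaker model of $\pi$; compatibility with the Heisenberg group $\sH_n\hookrightarrow \Sp_{2r}$ together with the Weil representation of $\wt{\Sp}_{2n}(F)$ on $\CS(F^n)$; and the quasi-invariance built into the induced representation $\tilde I(s,\tau,\psi^{-1})$. The key step is a local multiplicity-one statement: the space of such trilinear forms is at most one-dimensional for generic $s$. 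One proves this by unfolding along the double-coset decomposition of $P_n\backslash \Sp_{2n}$ (plus the extra $R^{r,n}$ direction when $n<r$), and showing that only the open double coset supports a nonzero functional while the boundary strata contribute trivially because $\tau$ is irreducible generic.

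Granted this uniqueness, the two trilinear forms $\Psi(W,\phi,\xi_s)$ and $\Psi(W,\phi, M(s,\tau,\psi)\xi_s)$ lie in the same one-dimensional space, so their ratio defines a meromorphic function $\gamma(s,\pi\times\tau,\psi)$, independent of $W$, $\phi$, and $\xi_s$. The main obstacle is the multiplicity-one assertion: controlling contributions from non-open double cosets in the Bruhat decomposition. I expect to handle this orbit by orbit via Frobenius reciprocity, showing that on each non-open orbit the stabilizer contains a unipotent subgroup on which $\psi_U$ is nontrivial while the Whittaker functional of $\tau$ is trivial, forcing the corresponding local form to vanish. The technical work of isolating the open orbit and proving the necessary vanishing on all other strata is precisely what is carried out in \cite{GiRS97, GiRS98}, so I would invoke those unfolding computations rather than redo them.
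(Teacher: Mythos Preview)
Your overall strategy---reduce to a uniqueness statement for the relevant trilinear (equivalently, bilinear) functionals and conclude proportionality---is exactly the approach the paper takes. But you have misidentified the input that makes it work. The references \cite{GiRS97,GiRS98} carry out the \emph{global} unfolding that shows the Rankin--Selberg integral is Eulerian; they do not establish the \emph{local} multiplicity-one statement you need. The equivariance conditions you list on the trilinear form (Heisenberg action, Weil representation of $\wt{\Sp}_{2n}$, quasi-invariance from the induced representation) are precisely those defining a Fourier--Jacobi functional, and the assertion that such functionals form a space of dimension at most one is the uniqueness of Fourier--Jacobi models. That is a substantially deeper fact than an orbit-by-orbit Bruhat analysis and was proved only recently by Sun \cite{Su} (see also \cite{GaGP}); this is exactly what the paper invokes, with the packaging in \cite[\S\S3--4]{Ka}.

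So the gap is not in the architecture of your argument but in the justification of its linchpin: the vanishing on non-open strata that you hope to read off from \cite{GiRS97,GiRS98} is not available there, and a naive Frobenius-reciprocity argument along $P_n\backslash \Sp_{2n}$ does not by itself control the Jacobi-group contributions. Replace that step with an appeal to \cite{Su,GaGP} (or to \cite{Ka}, which does the bookkeeping) and your proof is correct and coincides with the paper's.
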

The proof of the existence of the local gamma factors depends on the uniqueness of Fourier-Jacobi models for $\Sp_{2n}(F)$, which was recently proved in \cite{GaGP, Su}. Some more details can be found in \cite[$\S$3,4]{Ka}.\\
\noindent \textbf{Remark:} Unlike in the \cite{Ka} case, we did not normalize our intertwining operator. Let $\Gamma(s,\pi\times \tau,\psi)$ be the normalized local gamma factor which is defined on \cite[p.408]{Ka}. It is known that $\Gamma(s,\pi\times \tau,\psi)$ and $\gamma(s,\pi\times \tau,\psi)$ differ by a factor which only depends on $\tau$ and the central character of $\pi$. Thus if the local converse theorem is true if one uses $\gamma(s,\pi\times \tau,\psi)$, then it is also true if one uses $\Gamma(s,\pi\times \tau,\psi)$ in the statement. On the other hand, it is shown in \cite{Ka} that the gamma factors $\Gamma(s,\pi\times \tau,\psi)$ have multiplicativity properties. Thus one only has to use the twists by supercuspidal representations for $\GL_n(F)$ in the local converse theorem.

\section{Howe vectors and partial Bessel functions}\label{sec4}
Fix a positive integer $r$ and denote $G=\Sp_{2r}(F)$. We will ignore the sup-script $r$ from various notations. For example we will write $\tilde w_n^r$ as $\tilde w_n$.

 Let $Z$ be the center of $G$. Note that $Z=\wpair{\pm I_{2r}}.$ Let $\omega$ be a character of $Z$ and let $C_c^\infty(G,\omega)$ be the space of compactly supported smooth functions $f$ on $G$ such that $f(zg)=\omega(z)f(g), z\in Z,g\in G$. Note that if $\pi$ is an irreducible super-cuspidal representation of $G$ with central character $\omega$, then the space $\CM(\pi)$ of matrix coefficients of $\pi$ is a subspace of $C_c^\infty(G,\omega)$.

Recall that $U$ is the maximal unipotent subgroup of the Borel subgroup $B$. Let $\psi$ be an unramified non-trivial additive character of $F$ and let $\psi_U$ be the corresponding generic character of $U$. Denote $C^\infty(G,\psi_U,\omega)$ the space of functions $W$ on $G$ such that $W(zg)=\omega(z)W(g)$, $W(ug)=\psi_U(u)W(g)$ for all $z\in Z,u\in U,g\in G$, and there exists an open compact subgroup $K$ of $G$ such that $W(gk)=W(g)$ for all $g\in G, k\in K$. Note that if $\pi$ is a $\psi_U$-generic irreducible representation of $G$ with central character $\omega$, then its $\psi_U$-Whittaker model $\CW(\pi,\psi_U)$  is a subspace of $C^\infty(G,\psi_U,\omega)$.

\subsection{Howe vectors}

In this subsection, we give a review on Howe vectors following \cite{Ba95}. The proofs can be found in \cite{Ba95}.

Let $m>0$ be a positive integer and $K_m^{\Sp_{2r}}=(I_{2r}+\textrm{Mat}_{2r\times 2r}(\fp^m))\cap \Sp_{2r}(F)$ be the standard congruence subgroup of $\Sp_{2n}(F)$. Let $\psi$ be a fixed additive character of $F$ with conductor $\fo$. Consider the character $\tau_m$ of $K_m$ defined by
$$\tau_m((k_{ij}))=\psi\left(\varpi^{-2m} (\sum_{i=1}^{r} k_{i,i+1})\right).$$
It is easy to check that $\tau_m$ is indeed a character. Let 
$$e_m=\diag(\varpi^{-m(2r-1)}, \varpi^{-m(2r-3)},\dots, \varpi^{-m}, \varpi^m, \dots, \varpi^{m(2r-1)})\in \Sp_{2r}(F)$$
and $H^r_m=e_mK^{\Sp_{2r}}_m e_m^{-1}$. Define a character $\psi_m$ on $H^r_m$ by $\psi_m(h)=\tau_m(e_m^{-1}h e
_m), h\in H^r_m$. If the group $\Sp_{2r}$ is fixed, we will write $H_m$ for $H_m^r$. Denote $U^r_m=U^r\cap H^r_m$. Note that we have $U^r=\cup_{m\ge 1}U^r_m$. 

In matrix form we have 
$$H_m=\begin{pmatrix}1+\fp^m & \fp^{-m} &\fp^{-3m} & \fp^{-5m}& \dots & \fp^{-(4r-3)m}\\ 
                                      \fp^{3m}& 1+\fp^m & \fp^{-m} & \fp^{-3m}& \dots & \fp^{-(4r-5)m} \\
                                      \fp^{5m} & \fp^{3m} & 1+\fp^m &\fp^{-m}& \dots & \fp^{-(4r-7)m}\\
                                      \fp^{7m} &\fp^{5m} & \fp^{3m} &1+\fp^m &\dots&\fp^{-(4r-9)m}\\
                                      \dots & \dots & \dots & \dots &\dots& \dots \\
                                      \fp^{(4r-1)m} & \fp^{(4r-3)m} & \fp^{(4r-5)m}&\fp^{(4r-7)m} & \dots & 1+\fp^m
                              \end{pmatrix}\cap \Sp_{2r}(F).$$

Recall that, for a root $\gamma$, $U_\gamma$ denotes the corresponding root space, and $\rht(\gamma)$ denotes the height of $\gamma$. Denote $U_{\gamma,m}=U_{\gamma}\cap H_m$ and denote by $\Sigma^+$ the set of all positive roots of $G$.
\begin{lem}\label{lem2.1}
\begin{enumerate}
\item The two characters $\psi_U$ and $\psi_m$ agree on $U_m=U\cap H_m$. 
\item For a positive root $\gamma$ of $\Sp_{2n}$, then $$U_{\gamma,m}=\wpair{\bx_{\gamma}(x): x\in \fp^{-(2\rht(\gamma)-1)m}},$$
and $$U_{-\gamma,m}=\wpair{\bx_{-\gamma}(x): x\in \fp^{(2\rht(\gamma)+1)m }}.$$
Moreover, we have $$U_m=\prod_{\gamma\in \Sigma^+}U_{\gamma,m},$$
where the product on the right side can be taken in any fixed order of $\Sigma^+$.
\end{enumerate}
\end{lem}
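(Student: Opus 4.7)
The plan is to reduce everything to a direct calculation of how conjugation by $e_m$ rescales each root subgroup of $\Sp_{2r}(F)$. Writing $e_m=\diag(a_1,\dots,a_r,a_r^{-1},\dots,a_1^{-1})$ with $a_i=\varpi^{-m(2r-2i+1)}$, I will first check that every simple root takes the same value on $e_m$:
$$\alpha_i(e_m)=a_ia_{i+1}^{-1}=\varpi^{-2m}\quad(1\le i\le r-1),\qquad \alpha_r(e_m)=a_r^2=\varpi^{-2m}.$$
Since any positive root is uniquely $\gamma=\sum_i c_i\alpha_i$ with $\rht(\gamma)=\sum_i c_i$, this immediately gives $\gamma(e_m)=\varpi^{-2m\rht(\gamma)}$ for every positive $\gamma$.

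For part (1), I will take $u=(u_{ij})\in U_m$ and compute entrywise
$$(e_m^{-1}ue_m)_{i,i+1}=a_i^{-1}u_{i,i+1}a_{i+1}.$$
The key observation is that $a_i^{-1}a_{i+1}=\varpi^{2m}$ for every $i$ with $1\le i\le r$: for $i<r$ this is the reciprocal of $\alpha_i(e_m)$ above, while for $i=r$ one uses $a_{r+1}=a_r^{-1}=\varpi^m$ directly, so that $a_r^{-1}a_{r+1}=\varpi^{2m}$ as well. Substituting into the definition of $\tau_m$ then gives
$$\psi_m(u)=\tau_m(e_m^{-1}ue_m)=\psi\!\left(\varpi^{-2m}\sum_{i=1}^r\varpi^{2m}u_{i,i+1}\right)=\psi_U(u),$$
which is the first claim.

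For part (2), I will use $H_m=e_mK_me_m^{-1}$ together with the fact that $e_m$ normalizes every root subgroup. Since $\bx_{\pm\gamma}(x)\in K_m$ if and only if $x\in\fp^m$ (the standard congruence condition read off the matrix entries), conjugating $U_\gamma\cap K_m$ by $e_m$ replaces $\fp^m$ with $\gamma(e_m)\fp^m=\fp^{m-2m\rht(\gamma)}=\fp^{-(2\rht(\gamma)-1)m}$, and similarly for $-\gamma$ one gets $\fp^{m+2m\rht(\gamma)}=\fp^{(2\rht(\gamma)+1)m}$, giving the claimed descriptions of $U_{\gamma,m}$ and $U_{-\gamma,m}$. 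For the ordered product decomposition, I will invoke the standard factorization $U\cap K_m=\prod_{\gamma\in\Sigma^+}(U_\gamma\cap K_m)$ (a routine property of congruence subgroups of a split reductive group, proved by peeling off one root subgroup at a time in any fixed height-compatible order), and conjugate by $e_m$ to obtain $U_m=\prod_{\gamma\in\Sigma^+}U_{\gamma,m}$ in any fixed order of $\Sigma^+$.

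No step here is truly deep; the one item requiring honest attention is the uniformity $\alpha_i(e_m)=\varpi^{-2m}$ across both the short simple roots ($i<r$) and the long simple root $\alpha_r=2\varepsilon_r$. That uniformity is by design and is exactly why this particular diagonal $e_m$ is chosen: it is precisely what is needed to make the Howe character $\psi_m$ restrict to the fixed generic character $\psi_U$ on $U_m$.
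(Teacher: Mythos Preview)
Your argument is correct. The paper does not supply its own proof of this lemma; it simply states that the proofs can be found in \cite{Ba95}. Your direct verification---computing $\alpha_i(e_m)=\varpi^{-2m}$ uniformly for all simple roots, deducing $\gamma(e_m)=\varpi^{-2m\,\rht(\gamma)}$, and then reading off both the agreement $\psi_m|_{U_m}=\psi_U|_{U_m}$ and the description of $U_{\pm\gamma,m}$ from the conjugation $H_m=e_mK_me_m^{-1}$---is exactly the intended elementary calculation, and your appeal to the standard ordered factorization of $U\cap K_m$ for the product decomposition is appropriate.
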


Let $\omega$ be a character of $Z$ and we have defined the space $C^\infty(G,\psi_U,\omega)$. Given a function $W\in C^\infty(G,\psi_U,\omega)$ such that $W(1)=1$, and a positive integer $m>0$, we consider the function $W_m$ on $G$ defined by
$$W_m(g)=\frac{1}{\vol(U_m)}\int_{U_m}\psi_m(u)^{-1}W(gu)du.$$
Let $C=C(W)$ be an integer such that $W$ is fixed by $K_C^{\Sp_{2r}}$ on the right side, then a function $W_m$ with $m\ge C$ is called a \textbf{Howe vector} following \cite{Ba95, Ba97}.

\begin{lem}\label{lemma22}
We have
\begin{enumerate}
\item $W_{m}(1)=1;$
\item if $m\ge C$, then $W_m(gh)=\psi_m(h)W_m(g)$, for all $h\in H_m.$
\end{enumerate}
\end{lem}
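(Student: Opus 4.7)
The plan is to verify (1) by direct substitution and (2) by combining the Iwahori-type decomposition of $H_m$ with the right $K_C$-invariance of $W$.

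For (1), since $W\in C^\infty(G,\psi_U,\omega)$ with $W(1)=1$, we have $W(u)=\psi_U(u)$ for $u\in U$; by Lemma~\ref{lem2.1}(1), $\psi_U$ and $\psi_m$ agree on $U_m$, so the integrand $\psi_m(u)^{-1}W(u)$ is identically $1$ on $U_m$ and $W_m(1)=1$.

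For (2), the key structural input is the Iwahori-type decomposition $H_m = H_m^- A_m U_m$, obtained by conjugating the analogous decomposition of $K_m$ by $e_m$, where $H_m^- = U^-\cap H_m$ and $A_m = A\cap H_m = A\cap K_m$. Using the valuation estimates of Lemma~\ref{lem2.1}(2) one checks $H_m^- A_m \subset K_m$, and a direct matrix computation shows $\psi_m$ is trivial on $H_m^- A_m$ (after conjugation by $e_m$ the super-diagonal entries vanish), so that $\psi_m(h)=\psi_m(h_+)$ when $h=h_-h_0h_+$. The argument splits into two cases. If $h=h_+\in U_m$, the substitution $u\mapsto h_+^{-1}u$ in the defining integral of $W_m(gh_+)$ immediately gives $\psi_m(h_+)W_m(g)$. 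If $h\in H_m^- A_m$, then for each $u\in U_m$ I would factor $hu=u'h'$ in root coordinates with $u'\in U_m$ and $h'\in H_m^- A_m\subset K_m\subset K_C$ (using $m\ge C$). The right $K_C$-invariance of $W$ then gives $W(ghu)=W(gu')$, and the change of variables $u\mapsto u'$ on $U_m$ should be measure-preserving and satisfy $\psi_m(u)=\psi_m(u')$, which would give $W_m(gh)=W_m(g)$, and hence $W_m(gh)=\psi_m(h)W_m(g)$ combined with the first case.

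The main obstacle is the character-preservation claim $\psi_m(u)=\psi_m(u')$. By Lemma~\ref{lem2.1}(2) the root-subgroup coordinates of $u\in U_m$ lie in $\fp^{-(2\rht\gamma-1)m}$, while those of $h^-\in H_m^-$ lie in $\fp^{(2\rht\gamma+1)m}$. Solving $hu=u'h'$ recursively along a fixed ordering of $\Sigma^+$ shows that $u'-u$, in the simple-root coordinates (the only ones seen by $\psi_m$), lies in $\fp^m$ after the $\varpi^{-2m}$ scaling built into $\tau_m$; since $\psi$ has conductor $\fo$ and $m\ge 1$, this correction is killed by $\psi$. The same valuation bookkeeping also shows that $u\mapsto u'$ maps $U_m$ bijectively to itself with Jacobian of absolute value $1$. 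Once these estimates are carried out, the change of variables goes through and part (2) follows.
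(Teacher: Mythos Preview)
Your argument is correct and follows the same Iwahori-decomposition strategy that Baruch uses (the paper defers to \cite{Ba95}). But you are making your ``main obstacle'' harder than it is. The paper states, just before defining $\psi_m$, that $\tau_m$---hence $\psi_m$---is a \emph{character} of the group $H_m$; since you have already checked that $\psi_m$ is trivial on $H_m^-A_m$, the identity $\psi_m(u)=\psi_m(u')$ follows in one line from $\psi_m(hu)=\psi_m(u'h')$ together with $\psi_m(h)=\psi_m(h')=1$, and no root-by-root valuation analysis is needed. Likewise the Jacobian claim is automatic once you observe that $A_mH_m^-$ is a subgroup of $H_m$ (as $A_m$ normalizes $H_m^-$), so $u\mapsto u'$ is nothing but left translation by $h$ on the compact homogeneous space $H_m/(A_mH_m^-)\cong U_m$. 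Baruch packages these observations more cleanly: one first shows that for $m\ge C$ the defining average over $U_m$ equals the average over all of $H_m$, namely $W_m(g)=\vol(H_m)^{-1}\int_{H_m}\psi_m(h)^{-1}W(gh)\,dh$ (using $A_mH_m^-\subset K_m\subset K_C$ exactly as you do), after which part (2) is simply left translation on $H_m$ combined with the character property of $\psi_m$.
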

The proof of the above lemma can be found in \cite[Lemma 3.2]{Ba95}.

By (2) of Lemma \ref{lemma22}, for $m\ge C$, the function $W_{m}(g)$ satisfies the relation
\begin{equation}\label{eq2.1}W_{m}(u gh)=\psi_U(u)\psi_m(h)W_{m}(g), \forall u\in U, h\in H_m, g\in \Sp_{2n}(F). \end{equation}
Due to this relation, we also call $W_{m}$ a partial Bessel function.\\

Given $f\in C_c^\infty(G,\omega)$, we consider 
$$W^f(g)=\int_U \psi_U^{-1}(u)f(ug)du.$$
Note that the above integral is well-defined since $Ug$ is closed in $G$ and $f$ has compact support in $G$. Since $f$ is locally constant and has compact support, one can find a positive integer $C=C(f)$ such that $W^f(gk)=W^f(g)$ for all $g\in G, k\in K_C$. Thus $W^f\in C^\infty(G,\psi_U,\omega)$. We take a function $f\in C_c^\infty(G,\omega)$ such that $W^f(1)=1$. Given a positive integer $m>C(f)$, we consider the corresponding partial Bessel function 
\begin{equation}\label{eq2.2}B_m(g,f):=(W^f)_m(g)=\frac{1}{\vol(U_m)}\int_{U\times U_m}\psi_U^{-1}(u)\psi_m^{-1}(u')f(ugu')dudu'.\end{equation}

\subsection{Bruhat order} 
Let $\bW=\bW_G$ denote the Weyl group of $G=\Sp_{2r}(F)$. Denote by $e$ the unit element in $\bW$. For $w\in \bW$, denote $C(w)=BwB$. The Bruhat order on $\bW$ is defined by $w\le w'$ iff $C(w)\subset \overline{C(w')}$. We have $$\ov{C(w')}=\coprod_{w: w\le w'}C(w).$$

For $w\in \bW$, we denote $\Omega_{w}=\coprod_{w'\ge w}C(w')$. For any $w\in \bW$, $C(w)$ is closed in $\Omega_w$. 
\begin{prop}\label{prop2.5}
\begin{enumerate}
\item If $w,w'\in \bW$ with $w'>w$, then $\Omega_{w'}$ is an open subset of $\Omega_w$.  In particular,  for any $w\in \bW$, the set $\Omega_w$ is open in $G=\Omega_{e}$.
\item Let $P$ be a standard parabolic subgroup of $G$ and $w\in \bW$, then $PwP\cap \Omega_w$ is closed in $\Omega_w$. 
\end{enumerate}
\end{prop}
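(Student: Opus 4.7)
For part (1), the plan is to use the Bruhat closure formula $\overline{C(v)}=\bigsqcup_{u\le v}C(u)$ to realize $G\setminus\Omega_{w'}$ as a finite union of cell closures. Writing $G\setminus\Omega_{w'}=\bigsqcup_{v\not\ge w'}C(v)$ and observing that $u\le v$ together with $v\not\ge w'$ forces $u\not\ge w'$ by transitivity of the Bruhat order, I obtain $G\setminus\Omega_{w'}=\bigcup_{v\not\ge w'}\overline{C(v)}$, a finite union of closed sets. Hence $\Omega_{w'}$ is open in $G$. Since $w'\ge w$ implies $\Omega_{w'}\subseteq\Omega_w$, it is also open in $\Omega_w$, and the ``in particular'' claim is the special case $w=e$.

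For part (2), the plan is to realize $PwP\cap\Omega_w$ as the intersection of $\Omega_w$ with the closed set $\overline{PwP}$. First I would use that $PwP=\bigsqcup_{x\in W_PwW_P}C(x)$ is the $P\times P$-orbit decomposition refining the Bruhat cells, where $W_P$ denotes the Weyl group of the Levi of $P$. Letting $w_{\max}$ be the longest element of the double coset $W_PwW_P$, the cell $C(w_{\max})$ is the unique top-dimensional stratum of $PwP$ and is open dense in it, whence $\overline{PwP}=\overline{C(w_{\max})}=\bigsqcup_{x\le w_{\max}}C(x)$ is closed in $G$. Consequently $\overline{PwP}\cap\Omega_w$ is closed in $\Omega_w$, and it suffices to prove the equality $\overline{PwP}\cap\Omega_w=PwP\cap\Omega_w$, i.e., that any $x\in\bW$ with $w\le x\le w_{\max}$ lies in $W_PwW_P$.

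The key technical step, and where I expect the main obstacle to lie, is the monotonicity of the projection $\pi\colon\bW\to{}^{P}\bW^{P}$ sending $v$ to the minimal length representative of its double coset $W_PvW_P$: namely, $a\le b$ implies $\pi(a)\le\pi(b)$. Granting this, the chain $w\le x\le w_{\max}$ yields $\pi(w)\le\pi(x)\le\pi(w_{\max})=\pi(w)$, hence $\pi(x)=\pi(w)$, and therefore $x\in W_PwW_P$ as desired. To establish the monotonicity of $\pi$, I would factor it as $\pi=\pi_L\circ\pi_R$, where $\pi_R$ and $\pi_L$ denote the projections to minimal length representatives of $\bW/W_P$ and $W_P\backslash\bW$ respectively; each of these is monotonic by classical Bruhat theory of Coxeter groups, and the identity $\pi=\pi_L\circ\pi_R$ is verified by checking that $\pi_L(\pi_R(v))$ is a lower bound in Bruhat order for every element of $W_PvW_P$ while lying in the double coset itself. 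This factorization-based argument is presumably what underlies the MathOverflow discussion referenced in the acknowledgments.
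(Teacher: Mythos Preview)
Your argument for part (1) is correct and essentially the same as the paper's: both rely on the closure formula $\overline{C(v)}=\bigsqcup_{u\le v}C(u)$ together with transitivity of the Bruhat order. The paper phrases it as showing $\Omega_w\setminus\Omega_{w'}$ is closed in $\Omega_w$, while you show directly that $G\setminus\Omega_{w'}$ is closed in $G$; these are the same computation.

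For part (2) your approach is correct but genuinely different from the paper's. The paper does not prove the interval property of $W_PwW_P$ at all: it simply cites Proposition~2 of \cite{BKPST}, which asserts that $\{w'\in\bW: C(w')\subset PwP\}$ is a Bruhat interval $[w_{\min},w_{\max}]$, and then observes that $w,w_1\in[w_{\min},w_{\max}]$ and $w\le w_2\le w_1$ force $w_2\in[w_{\min},w_{\max}]$. You instead propose to \emph{reprove} the relevant direction of that interval statement via monotonicity of the double-coset projection $\pi$. This is a valid and more self-contained route; what it buys is independence from the external reference, at the cost of a Coxeter-theoretic lemma.

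One point to sharpen: your verification of $\pi=\pi_L\circ\pi_R$ (``$\pi_L(\pi_R(v))$ is a lower bound for every element of $W_PvW_P$'') is not quite the right thing to check, and as stated would be circular. The clean statement you need is that $\pi_L$ preserves $\bW^P$, i.e., if $u\in\bW^P$ then $\pi_L(u)\in\bW^P$. This is true: write $u=a u_0$ with $a\in W_P$, $u_0=\pi_L(u)\in{}^P\bW$; if $u_0s<u_0$ for some simple $s\in W_P$, then an exchange-condition argument forces $u_0s=s'u_0$ for some simple $s'\in W_P$, whence $s'u_0<u_0$, contradicting $u_0\in{}^P\bW$. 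Once you have $\pi_L(\pi_R(v))\in{}^P\bW\cap\bW^P$ and $\pi_L(\pi_R(v))\in W_PvW_P$, uniqueness of the minimal double-coset representative gives $\pi_L(\pi_R(v))=\pi(v)$, and then monotonicity of $\pi$ follows from that of $\pi_L$ and $\pi_R$.
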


\begin{proof}
(1) We will show that $\Omega_w-\Omega_{w'}$ is closed in $\Omega_w$. Note that $ \Omega_w-\Omega_{w'}$ is a union of Bruhat cells. For $C(w_1)\subset \Omega_w-\Omega_{w'}$, we have $w_1>w$ but $w_1$ is not bigger than or equal to $w'$. We have $\ov{C(w_1)}=\coprod_{w_2\le w_1}C(w_2)$, and thus $$\ov{C(w_1)}\cap \Omega_w=\coprod_{w_1\ge w_2\ge w}C(w_2).$$
For $w_2$ with $w_1\ge w_2\ge w$, $w_2$ is not bigger than or equal to $w'$ (otherwise, we will have $w_1\ge w'$). Thus $C(w_2)\subset \Omega_w-\Omega_{w'}$. This implies that $\ov{C(w_1)}\cap \Omega_w \subset  \Omega_w-\Omega_{w'}$ and hence $\Omega_w-\Omega_{w'}$ is closed.

(2) Note that $PwP$ is a union of Bruhat cells. Thus it suffices to show that if $C(w_1)\subset PwP\cap \Omega_w$, then $\ov{C(w_1)}\cap \Omega_w\subset PwP\cap \Omega_w$. Given $C(w_2)\subset \ov{C(w_1)}\cap \Omega_w $, we need to show that $C(w_2)\subset PwP\cap \Omega_w$. Note that $C(w_2)\subset \ov{C(w_1)}\cap \Omega_w$ implies that $w_1\ge w_2\ge w$. By Proposition 2 of \cite{BKPST}, the set $D$ of elements $w'\in \bW$ such that $C(w')\subset PwP$ forms a Bruhat interval, i.e., there exists $w_{\min}, w_{\max}\in D$ such that $w'\in D$ if and only if $w_{\min}\le w'\le w_{\max}$. By the assumption $C(w_1)\subset PwP$, we get $w_1\in D$. Since $w\in D$ and $w_1\ge w_2\ge w$, we get $w_2\in D$, i.e., $C(w_2)\subset Pw_2P$. This completes the proof.
\end{proof}

For a character $\omega$ of $Z$, and a subspace $X$ of $G$ which is $Z$ invariant, we denote by $C_c^\infty(X,\omega)$ the space of smooth compactly supported functions $f$ on $X$ such that $f(zx)=\omega(z)f(x)$. Let $Y$ be a $Z$-invariant closed subset of $X$, we have the following exact sequence 
\begin{align}\label{eq2.3}
0\ra C_c^\infty(X-Y, \omega)\ra C_c^\infty(X,\omega)\ra C_c^\infty(Y,\omega)\ra 0,
\end{align}
where $C_c^\infty(X-Y,\omega)\ra C_c^\infty(X,\omega)$ is the map induced by zero extension, and the map $C_c^\infty(X,\omega)\ra C_c^\infty(Y,\omega) $ is the restriction map. See \cite[$\S$1.8]{BZ76}.

 Since $\Omega_w$ is open in $G$, we have $C_c^\infty(\Omega_w,\omega)\subset C_c^\infty(G,\omega)$. Since $C(w)$ is closed in $\Omega_w$, we have the exact sequence 
\begin{align}
\label{eq2.4}0\ra C_c^\infty(\Omega_w-C(w),\omega)\ra C_c^\infty(\Omega_w,\omega)\ra C_c^\infty(C(w),\omega)\ra 0.
\end{align}

\subsection{Weyl elements which support Bessel functions}

 Let $B(G)$ be the subset of $\bW$ which consists Weyl elements which can support Bessel functions, i.e., $w\in B(G)$ if and only if for every simple root $\alpha\in \Delta$ we have $w\alpha>0$ implies $w\alpha\in \Delta$. Recall that $w_{\ell}=\begin{pmatrix} &J_r\\ -J_r& \end{pmatrix}$, which represents the long Weyl element in $\bW$. Then we know that $w\in B(G)$ if and only if $w_{\ell}w$ is the long Weyl element of a standard Levi subgroup of $G$. For $w\in B(G)$, let $P_w=M_wN_w$ be the standard parabolic subgroup such that $w_{\ell}w=w_{\ell}^{M_w}$, where $M_w$ is the Levi subgroup of $P_w$ and $w_{\ell}^{M_w}$ is the long Weyl element in $M_w$. Let $\theta_w$ be the subset of $\Delta$ which consists all simple roots in $M_w$. Then we have 
$$\theta_w=\wpair{\alpha\in \Delta| w\alpha >0}\subset \Delta.$$
The assignment $w\mapsto \theta_w$ defines a bijection between the set $B(G)$ and subsets of $\Delta$. Given a subset $\theta\subset \Delta$, we will write $w_\theta$ for the corresponding element in $B(G)$. We then have $w_{\emptyset}=w_{\ell}, w_{\Delta}=e$.
\begin{lem}[Proposition 2.1 of \cite{CPSS05}]\label{lem2.6}
Let $w,w'\in B(G)$. Then $w' \le w$ iff $M_w\subset M_{w'}$ iff $\theta_w\subset \theta_{w'}$. 
\end{lem}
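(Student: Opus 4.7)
My plan is to establish the three equivalences in Lemma~\ref{lem2.6} by first handling the easy equivalence between the Levi-inclusion and root-subset conditions, and then converting the Bruhat-order statement into a statement about long elements of Levi subgroups, where it is classical.

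\textbf{Step 1 (Easy equivalence).} I would begin by noting that standard Levi subgroups of $G$ are in bijection with subsets of $\Delta$ via $M \leftrightarrow \theta(M)$, where $\theta(M)$ is the set of simple roots appearing in $M$. Since $\theta_w$ is defined to be $\theta(M_w)$, the equivalence $M_w \subset M_{w'} \Longleftrightarrow \theta_w \subset \theta_{w'}$ is immediate.

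\textbf{Step 2 (Translating to long elements).} For the equivalence with $w' \le w$, I would exploit the defining relation $w_{\ell}w = w_{\ell}^{M_w}$ for $w \in B(G)$. The strategy is to reduce the claim to two well-known facts about the Weyl group:
\begin{enumerate}
\item[(i)] For standard Levis $M, M'$, one has $M \subset M'$ if and only if $w_{\ell}^M \le w_{\ell}^{M'}$ in the Bruhat order of $\bW$.
\item[(ii)] Left multiplication by the long Weyl element reverses the Bruhat order: for $x, y \in \bW$, $x \le y$ if and only if $w_{\ell}y \le w_{\ell}x$.
\end{enumerate}
Fact (i) follows from the subword characterization of Bruhat order: a reduced expression for $w_{\ell}^M$ uses only simple reflections in $\theta(M) \subset \theta(M')$, and one checks that it can be completed to a reduced expression for $w_{\ell}^{M'}$; conversely, an inclusion of long elements forces the corresponding simple root sets to be nested because $\ell(w_{\ell}^M)$ equals the number of positive roots of $M$. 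Fact (ii) is standard and follows, for example, from the length identity $\ell(w_{\ell}x) = \ell(w_{\ell}) - \ell(x)$ together with the exchange condition.

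\textbf{Step 3 (Assembling the chain).} Putting these together, I would chain the equivalences as follows:
\begin{align*}
\theta_w \subset \theta_{w'} \iff M_w \subset M_{w'} &\iff w_{\ell}^{M_w} \le w_{\ell}^{M_{w'}} \\
&\iff w_{\ell}w \le w_{\ell}w' \iff w' \le w,
\end{align*}
using Step~1 for the first equivalence, Fact~(i) for the second, the defining identity $w_{\ell}w = w_{\ell}^{M_w}$ for the third, and Fact~(ii) for the last.

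The only step requiring real care is Fact~(i), since it is the place where the combinatorics of Bruhat order genuinely enters; the rest is bookkeeping. The main obstacle I would expect is verifying that the subword/length argument for Fact~(i) goes through cleanly for the type $C_r$ root system used here, but since the general statement is a classical consequence of the strong exchange condition and is valid in every finite Coxeter group, no type-specific issues should arise. Thus the proof reduces to citing standard Coxeter-theoretic results applied via the bijection $w \mapsto w_{\ell}w = w_{\ell}^{M_w}$ between $B(G)$ and the set of long elements of standard Levi subgroups.
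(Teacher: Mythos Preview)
Your proof is correct. The paper itself does not prove this lemma at all; it simply cites it as Proposition~2.1 of \cite{CPSS05} and moves on. You have supplied the actual argument, which is the standard one: pass from $w$ to $w_\ell w = w_\ell^{M_w}$, use that left multiplication by $w_\ell$ is an order-reversing involution on $(\bW,\le)$, and use that long elements of standard parabolic subgroups are comparable in Bruhat order precisely when the corresponding simple-root sets are nested.

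One small sharpening: your justification of the converse direction in Fact~(i) via ``an inclusion of long elements forces the corresponding simple root sets to be nested because $\ell(w_\ell^M)$ equals the number of positive roots of $M$'' is not quite the argument. The clean way is via supports: for any $x\le y$ in a Coxeter group one has $\supp(x)\subset\supp(y)$, and $\supp(w_\ell^{M})=\theta(M)$ (since $w_\ell^M\in W_{\theta(M)}$ and if its support were a proper subset $\theta'\subsetneq\theta(M)$ then $\ell(w_\ell^M)\le\ell(w_\ell^{\theta'})<\ell(w_\ell^{\theta(M)})$, a contradiction). With that adjustment the chain in Step~3 is airtight.
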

If $w,w'\in B(G)$ with $w>w'$, we set (following Jacquet \cite{Ja16}) 
$$d_B(w,w')=\max\wpair{m| \textrm{ there exist } w_i\in B(G) \textrm{ with } w=w_m>w_{m-1}>\cdots>w_0=w'}.$$
The number $d_B(w,w')$ is called the Bessel distance between $w$ and $w'$. We now consider all elements $w\in \bW$ with $d_B(w,e)=1$. By Lemma \ref{lem2.6}, if $d_B(w,e)=1$, then there exists a simple root $\gamma\in \Delta$ such that $w=w_{\Delta-\wpair{\gamma}}$. There are totally $r$ such Weyl elements. Actually one can check that they are represented by $\tilde w_n$: 
$$\tilde w_{n}=w_{\Delta-\wpair{\alpha_n}} .$$
Here we didn't distinguish an element in $G$ and the Weyl element it represents. 

For $w_1,w_2\in \bW$ with $ w_1<w_2$, denote by $[w_1,w_2]$ the closed Bruhat interval $\wpair{w\in \bW| w_1\le w\le w_2}$.

Recall that $Q_n=L_nV_n$ is the parabolic subgroup of $\Sp_{2r}$ with the Levi subgroup 
$$L_n=\wpair{\bm_r(a,a_{n+1},\dots, a_r), a\in \GL_n(F), a_i\in F^\times, n+1\le i\le r}.$$

\begin{lem}\label{lem2.7}
The set $w\in \bW$ such that $C(w)\subset Q_n \tilde w_n Q_n$ is the Bruhat interval $[w_{\min},w_{\max}]$ with $w_{\min}=\tilde w_n$ and $w_{\max}=w_{\ell}^{L_n}\tilde w_n $, where $w_\ell^{L_n}$ is the long Weyl element in $L_n \cong \GL_n(F)\times (\GL_1(F))^{r-n}$.
\end{lem}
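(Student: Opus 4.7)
The plan is to first invoke the same result of \cite{BKPST} used in the proof of Proposition \ref{prop2.5}, which yields that the set $\{w\in \bW : C(w)\subset Q_n\tilde w_n Q_n\}$ is a Bruhat interval $[w_{\min},w_{\max}]$ in $\bW$; as a set this interval coincides with the $(W_{L_n},W_{L_n})$-double coset of $\tilde w_n$ in $\bW$. The remaining task is to pin down the two endpoints.

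The core computation is the action of $\tilde w_n$ on the maximal torus $A^r$. Reading off the matrix $\tilde w_n=\begin{pmatrix}&&I_n\\ &I_{2(r-n)}&\\ -I_n&&\end{pmatrix}$, one verifies directly that conjugation by $\tilde w_n$ sends the diagonal entry $a_i$ to $a_{n+1-i}^{-1}$ for $1\le i\le n$ and fixes $a_j$ for $n+1\le j\le r$. Consequently, for every simple root $\alpha_i$ of $L_n$ (i.e.\ for $1\le i\le n-1$) one obtains $\tilde w_n(\alpha_i)=\alpha_{n-i}$, so $\tilde w_n$ permutes the set of simple roots of $L_n$ and therefore normalizes the Weyl group $W_{L_n}$ inside $\bW$. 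This in particular collapses the double coset to a single left coset: $W_{L_n}\tilde w_n W_{L_n}=W_{L_n}\tilde w_n$.

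Since $\tilde w_n^{-1}$ also sends every element of $\{\alpha_1,\dots,\alpha_{n-1}\}$ to a positive simple root, $\tilde w_n$ is the unique shortest representative of the coset $W_{L_n}\tilde w_n$, and the length function satisfies $\ell(u\tilde w_n)=\ell(u)+\ell(\tilde w_n)$ for every $u\in W_{L_n}$. This gives $w_{\min}=\tilde w_n$ immediately, and taking $u=w_\ell^{L_n}$ yields $w_{\max}=w_\ell^{L_n}\tilde w_n$. The only slightly delicate step is the matrix-level verification that $\tilde w_n$ acts on the simple roots of $L_n$ by $\alpha_i\mapsto \alpha_{n-i}$; once this is in hand, the remainder is a standard combination of the cited Bruhat-interval result with elementary properties of minimal and maximal coset representatives in a finite Coxeter group.
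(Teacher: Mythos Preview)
Your proof is correct and follows essentially the same route as the paper: both invoke \cite{BKPST} to see that $\{w:C(w)\subset Q_n\tilde w_nQ_n\}=W_{L_n}\tilde w_n W_{L_n}$ is a Bruhat interval, and both identify $w_{\min}=\tilde w_n$ via the fact that $\tilde w_n^{-1}$ sends the simple roots $\alpha_1,\dots,\alpha_{n-1}$ of $L_n$ to positive roots. The only minor difference is in how $w_{\max}$ is extracted: the paper quotes \cite[Corollary 3]{BKPST} to write every element of the double coset uniquely as $\tilde w_n w'$ with additive lengths, whereas you make the slightly stronger observation that $\tilde w_n(\alpha_i)=\alpha_{n-i}$, so $\tilde w_n$ actually normalizes $W_{L_n}$ and the double coset collapses to the single coset $W_{L_n}\tilde w_n$, from which $w_{\max}=w_\ell^{L_n}\tilde w_n$ is immediate. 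Your explicit root computation also makes the final equality $\tilde w_n w_\ell^{L_n}=w_\ell^{L_n}\tilde w_n$ (which the paper checks separately) automatic.
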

\begin{proof}
By \cite[Proposition 2]{BKPST}, we know that the set $D=\wpair{w\in \bW| C(w)\subset Q_n \tilde w_n Q_n}$ is a Bruhat interval $[w_{\min},w_{\max}]$. Note that the set of simple roots in $L_n$ is $\tilde\theta=\wpair{\alpha_1,\dots,\alpha_{n-1}}$. Let $\bW_{\tilde \theta}$ be the Weyl group of $L_n$. It is known that $D=W_{\tilde \theta}\cdot \tilde w_n\cdot W_{\tilde \theta}$, see \cite[Corollaire 5.20]{BT65}. Since $\tilde\theta\subset \Delta-\wpair{\alpha_n}=\theta_{\tilde w_n}$, we have $\tilde w_n (\tilde \theta)>0 $. Since the Weyl element represented by $\tilde w_n$ has order 2, we have $\tilde w_n^{-1}(\tilde \theta)>0$. Then by \cite[Proposition 1.1.3]{Ca} or \cite[Proposition 2 (2)]{BKPST}, we have $w_{\min}=\tilde w_n$. On the other hand, by \cite[Corollary 3]{BKPST}, we know that every $w\in W_{\tilde \theta}\cdot \tilde w_n\cdot W_{\tilde \theta}$ can be uniquely written as $w=\tilde w_n w'$ with $w'\in W_{\tilde \theta}$, and $\ell(w)=\ell(\tilde w_n)+\ell(w')$, where $\ell$ denotes the length of a Weyl element. Thus $w_{\max}=\tilde w_n w_{\ell}^{L_n}$. One can check easily that $\tilde w_n w_{\ell}^{L_n}=w_{\ell}^{L_n} \tilde w_n$.
\end{proof}

For $w\in B(G)$, denote 
$$A_w=\wpair{a\in A| \alpha(a)=1 \textrm{ for all } \alpha\in \theta_w},$$
which is the center of $M_w$. Note that $M_e=G$ and $A_e=Z$.


\subsection{Cogdell-Shahidi-Tsai's theory on partial Bessel functions } In this subsection, we review certain basic properties of partial Bessel functions developed by Cogdell, Shahidi and Tsai \cite{CST17} recently, which has fundamental importance in the proof of our local converse theorem.
\begin{thm}[Cogdell-Shahidi-Tsai] \label{thm2.8}
\begin{enumerate}
\item Let $w\in B(G)$, $m>0$ and $f\in C_c^\infty(\Omega_w,\omega)$. Suppose $B_m(aw,f)=0$ for all $a\in A_w$. Then there exists $f_0\in C_c^\infty(\Omega_w-C(w), \omega)$ which only depends on $f$, such that for sufficiently large $m$ depending only on $f$, we have $B_m(g,f)=B_m(g,f_0)$ for all $g\in G$. 
\item Let $w\in B(G)$. Let $\Omega_{w,0}$ and $\Omega_{w,1}$ be $U\times U$ and $A$-invariant open sets of $\Omega_w$ such that $\Omega_{w,0}\subset \Omega_{w,1}$ and $\Omega_{w,1}-\Omega_{w,0}$ is a union of Bruhat cells $C(w')$ such that $w'$ does not support a Bessel function, i.e., $w'\notin B(G)$. Then for any $f_1\in C_c^\infty(\Omega_{w,1}, \omega)$ there exists $f_0\in C_c^\infty(\Omega_{w,0},\omega)$ such that for all sufficiently large $m$ depending only on $f_1$, we have 
$B_m(g,f_0)=B_m(g,f_1),$ for all $g\in G$.
\end{enumerate}
\end{thm}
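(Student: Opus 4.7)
My plan is to analyze $B_m(g,f)$ as a function of $f \in C_c^\infty(\Omega_w,\omega)$ by using the short exact sequence (2.4) to separate the contributions of the open subvariety $\Omega_w - C(w)$ and the closed Bruhat cell $C(w)$, while exploiting the transformation property (2.1) to pin down how $B_m$ behaves on $C(w)$ itself. Throughout, the fact that $\Omega_w$ is $U \times U$-invariant guarantees that $B_m(\cdot, f)$ is supported in $\Omega_w$ whenever $f$ is, which is what allows the cell-by-cell bookkeeping to be well-posed.

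For part (1), I would first restrict $f$ to $\bar f \in C_c^\infty(C(w), \omega)$ via (2.4). Since $w \in B(G)$, we have $w_\ell w = w_\ell^{M_w}$, so the cell $C(w) = BwB$ factors compatibly with $M_w$, and the characters $\psi_U$ on $U$ and $\psi_m$ on $H_m$ line up along the Levi $M_w$. A direct calculation using (2.1), together with Lemma \ref{lem2.1}(2) for the root-subgroup decomposition of $U_m$, shows that for $m$ sufficiently large (depending only on the support of $f$) the contribution of $\bar f$ to $B_m(g,f)|_{C(w)}$ is governed entirely by its values on the representatives $wA_w$. The hypothesis $B_m(aw, f) = 0$ for every $a \in A_w$ therefore kills this contribution, and the exactness of (2.4) lets us lift the difference to a function $f_0 \in C_c^\infty(\Omega_w - C(w), \omega)$ with the same $B_m(\cdot, f)$.

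For part (2), I would peel off the finitely many cells $C(w') \subset \Omega_{w,1} - \Omega_{w,0}$ one at a time, starting from a cell that is minimal in the Bruhat order (hence closed in $\Omega_{w,1}$, by Proposition \ref{prop2.5}(1)). For such a $w' \notin B(G)$ there is a simple root $\alpha \in \Delta$ with $w'\alpha > 0$ but $w'\alpha \notin \Delta$; the root subgroup $U_{w'\alpha}$ then produces a character mismatch between $\psi_U$ and its conjugate by $w'$, yielding the standard Bessel-vanishing on $C(w')$. Consequently the hypothesis of part (1), with $w$ replaced by $w'$, is automatically satisfied on the $C(w')$-piece of $f_1$, so applying part (1) on an appropriate open neighborhood of $C(w')$ inside $\Omega_{w,1}$ lets us replace $f_1$ by a function supported off $C(w')$. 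Iterating over the finitely many such cells yields $f_0 \in C_c^\infty(\Omega_{w,0}, \omega)$ with the required property.

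The main obstacle is the assertion in part (1) that $B_m(g,f)|_{C(w)}$ is determined, for sufficiently large $m$, by its values at $wA_w$. This requires a careful Bruhat-decomposition analysis of the twisted averaging integral (2.2): one must control how the characters $\psi_U$ and $\psi_m$ interact under the root-subgroup decomposition of $U$ and $H_m$, and verify that $m$ is large enough that $\psi_m$ factors through the center $A_w$ of $M_w$ on the relevant congruence piece, so that the $U \times U_m$-averaging truly isolates the data on $wA_w$. A secondary technical point is uniformity of $m$ in part (2); but since only finitely many cells must be removed, taking the maximum of the bounds from successive applications of part (1) suffices.
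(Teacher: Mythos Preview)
The paper does not prove this theorem at all: its entire proof is a citation to Lemmas~5.13 and~5.14 of \cite{CST17}, with the remark that although those results are stated for $\GL_n$, the arguments are general enough to apply here. So there is no argument in the paper against which to compare your sketch in detail; your outline is an attempt to reconstruct the content of \cite{CST17} rather than of the present paper.

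That said, your plan for part~(2) has a genuine gap. You propose to invoke part~(1) ``with $w$ replaced by $w'$'' for $w'\notin B(G)$, but part~(1) is stated only for $w\in B(G)$: the Levi $M_{w'}$ and the torus $A_{w'}$ are defined via $w_\ell w' = w_\ell^{M_{w'}}$, which has no meaning when $w'\notin B(G)$. The actual argument in \cite{CST17} for part~(2) does not reduce to part~(1). Instead, one shows directly that if $h$ is supported on a single closed cell $C(w')$ with $w'\notin B(G)$, then $B_m(g,h)=0$ for \emph{all} $g\in G$ once $m$ is large (not merely at distinguished torus points), using precisely the simple-root mismatch you identified: the root subgroup $U_\alpha$ contributes a nontrivial character via $\psi_U$, while its conjugate $U_{w'\alpha}$ with $w'\alpha$ positive but non-simple contributes trivially, forcing the averaged integral to vanish. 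The exact sequence then lets you strip $C(w')$ from the support without altering $B_m$. This runs parallel to the mechanism in part~(1) but is logically independent of it as stated.
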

\begin{proof}
Part (1) is \cite[Lemma 5.13]{CST17} and part (2) is \cite[Lemma 5.14]{CST17}. Note that although the results in \cite{CST17} were proved in a different setting, their proof is in fact general enough to include our case.
\end{proof}

\begin{cor}\label{cor2.9}
 Let $f_0,f\in C_c^\infty(G,\omega)$ with $W^f(1)=W^{f_0}(1)=1 $. Then for each $i$ with $1\le i\le r$, there exist functions $f_{\tilde w_i}\in C_c^\infty(\Omega_{\tilde w_i}, \omega)$ such that for sufficiently large $m$ (depending only on $f,f_0$) we have 
$$B_m(g,f)-B_m(g,f_0)=\sum_{i=1}^r B_m(g,f_{\tilde w_i}).$$
\end{cor}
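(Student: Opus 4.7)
The plan is to set $F := f - f_0 \in C_c^\infty(G,\omega)$ and use Theorem \ref{thm2.8} twice to successively replace $F$ by functions supported in smaller open subsets, culminating in the open set $U_1 := \bigcup_{i=1}^r \Omega_{\tilde w_i}$, before decomposing by a partition of unity subordinate to the cover $\{\Omega_{\tilde w_i}\}$. By linearity it suffices to build $f_{\tilde w_i} \in C_c^\infty(\Omega_{\tilde w_i},\omega)$ with $B_m(g, f) - B_m(g,f_0) = B_m(g,F) = \sum_{i=1}^r B_m(g, f_{\tilde w_i})$ for all large $m$. By the linearity of $f \mapsto W^f$ and of the averaging in \eqref{eq2.2}, together with the hypothesis $W^f(1) = W^{f_0}(1) = 1$ and Lemma \ref{lemma22}(1), one has $(W^F)_m(1) = W^F(1) = 0$ for $m \ge \max(C(f), C(f_0))$.

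First I would apply Theorem \ref{thm2.8}(1) at $w = e \in B(G)$. For any $z \in Z = A_e$, the $\omega$-equivariance of $W^F$ gives $B_m(z, F) = \omega(z)(W^F)_m(1) = 0$, so the hypothesis of the theorem is met, yielding $F_1 \in C_c^\infty(\Omega_e - C(e), \omega) = C_c^\infty(G - B, \omega)$ with $B_m(g,F) = B_m(g, F_1)$ for sufficiently large $m$.

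Next I would apply Theorem \ref{thm2.8}(2), still at $w = e$, with $\Omega_{w,1} = G - B$ and $\Omega_{w,0} = U_1$. Both sets are open, $U \times U$- and $A$-invariant (being unions of Bruhat cells), and $\Omega_{w,0} \subset \Omega_{w,1}$. The crux of the argument is the combinatorial verification that every cell $C(w')$ appearing in $\Omega_{w,1} - \Omega_{w,0}$ satisfies $w' \notin B(G)$: if, for contradiction, $w' = w_\theta \in B(G)$ lies in this difference, then $w' > e = w_\Delta$ forces $\theta \subsetneq \Delta$ by Lemma \ref{lem2.6}, and picking any $\alpha_i \in \Delta - \theta$ gives $\theta \subset \Delta - \{\alpha_i\}$, i.e.\ $w_\theta \ge w_{\Delta - \{\alpha_i\}} = \tilde w_i$, contradicting $w' \notin U_1$. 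Theorem \ref{thm2.8}(2) then produces $F_2 \in C_c^\infty(U_1, \omega)$ with $B_m(g, F_1) = B_m(g, F_2)$ for sufficiently large $m$.

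Finally, I would decompose $F_2$ using a $Z$-equivariant partition of unity subordinate to the open cover $\{\Omega_{\tilde w_i}\}$ of $U_1$: since the image of $\supp(F_2)$ in $Z \backslash G$ is compact, it can be covered by finitely many open compact subsets each lying in some $Z \backslash \Omega_{\tilde w_i}$, and the characteristic functions of these subsets (pulled back $\omega$-equivariantly) decompose $F_2 = \sum_{i=1}^r f_{\tilde w_i}$ with $f_{\tilde w_i} \in C_c^\infty(\Omega_{\tilde w_i}, \omega)$. Summing then yields the desired identity for all sufficiently large $m$. I expect the main technical step to be the combinatorial claim in the third paragraph, which pins down precisely why the elements $\tilde w_i$ are the ones that appear; the rest is a direct two-step invocation of the Cogdell--Shahidi--Tsai machinery followed by a standard partition argument in the $p$-adic setting.
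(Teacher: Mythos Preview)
Your proposal is correct and follows essentially the same route as the paper's proof: apply Theorem~\ref{thm2.8}(1) at $w=e$ to pass from $G$ to $G-B$, then Theorem~\ref{thm2.8}(2) to pass to $\bigcup_{i=1}^r\Omega_{\tilde w_i}$, and finish with a partition of unity. In fact you supply slightly more detail than the paper, which simply asserts the combinatorial claim that $\Omega_{e,1}-\Omega_{e,0}$ contains no Bessel-supporting Weyl cells; your argument via Lemma~\ref{lem2.6} is exactly the intended verification.
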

This is the analogue of \cite[Proposition 5.3]{CST17}.
\begin{proof}
For $m\ge \max\wpair{C(f),C(f_0)}, t\in Z$, we have 
$$B_m(t,f)=\frac{1}{\vol(U_m)}\int_{U\times U_m}f(utu')\psi_U^{-1}(u)\psi_m^{-1}(u')dudu'=\omega(t)W^f(1)=\omega(t). $$
Similarly, we have $B_m(t,f_0)=\omega(t)$ for $t\in Z$.
Thus we get $B_m(t,f)=B_m(t,f_0)$ for all $t\in A_e=Z$ and $m\ge \max\wpair{C(f),C(f_0)}$. By applying Theorem \ref{thm2.8} (1) to the Weyl element $e$, there exists a function $f_1\in C_c^\infty(G-B, \omega)$ such that $B_m(g,f-f_0)=B_m(g,f_1)$ for all $g\in G$ and $m$ large. Denote 
$$\Omega_{e,0}=\bigcup_{w\in B(G), w\ne 1}\Omega_w=\bigcup _{w\in B(G), d_B(w,1)=1}\Omega_w=\bigcup_{1\le i\le r}\Omega_{\tilde w_i},$$
and $\Omega_{e,1}=G-B$. Then we have $\Omega_{e,0}\subset \Omega_{e,1}$, and $\Omega_{e,1}-\Omega_{e,0}$ is a union of Bruhat cells $C(w')$ such that $w'\notin B(G)$.

By Theorem \ref{thm2.8} (2), there exists a function $f_2\in C_c^\infty(\Omega_{e,0},\omega)$ such that 
$$B_m(g,f-f_0)=B_m(g,f_1)=B_m(g,f_2), $$
for all $g\in G$ and $m$ large enough. As in the proof of \cite[Proposition 5.3]{CST17}, we can follow Jacquet \cite{Ja16} to write
$$f_2=\sum_{i}f_{\tilde w_i} \textrm{ with } f_{\tilde w_i}\in C_c^\infty(\Omega_{\tilde w_i}, \omega)$$
using a partition of unity argument. We then get 
$$B_m(g,f)-B_m(g,f_0)=\sum_{i=1}^r B_m(g,f_{\tilde w_i}), $$
for all $g\in G$ and $m$ large (which only depends on $f$ and $f_0$).
\end{proof}

\subsection{Outline of the proof of the local converse theorem}
We now repeat the main theorem of this paper. 
\begin{thm}\label{thm2.10}
Let $\pi,\pi_0$ be two irreducible $\psi_U$-generic supercuspidal representations of $\Sp_{2r}(F)$ with the same central character. If $\gamma(s,\pi\times \tau,\psi)=\gamma(s,\pi_0\times \tau,\psi) $ for all irreducible generic representations $\tau$ of $\GL_k(F)$ and for all $k$ with $1\le k\le r$, then $\pi\cong \pi_0$.
\end{thm}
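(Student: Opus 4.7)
The plan is to translate the hypothesis on equality of gamma factors, via the local functional equation (\ref{eq1.1}), into an equality of partial Bessel functions for $\pi$ and $\pi_0$, and then invoke the uniqueness of Whittaker models for supercuspidals to conclude $\pi \cong \pi_0$. The machinery is furnished by Howe vectors together with Theorem~\ref{thm2.8} (Cogdell--Shahidi--Tsai), run as an induction on the rank $n$ of the twisting $\GL$.

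First I would fix matrix coefficients $f \in \CM(\pi)$ and $f_0 \in \CM(\pi_0)$ normalized by $W^f(1) = W^{f_0}(1) = 1$, and form the associated partial Bessel functions $B_m(\cdot,f), B_m(\cdot,f_0) \in C^\infty(G, \psi_U, \omega)$ via (\ref{eq2.2}). By Corollary~\ref{cor2.9}, for all sufficiently large $m$ there exist $f_{\tilde w_i} \in C_c^\infty(\Omega_{\tilde w_i}, \omega)$ with
\begin{equation*}
B_m(g, f) - B_m(g, f_0) \;=\; \sum_{i=1}^{r} B_m(g, f_{\tilde w_i}),
\end{equation*}
so the problem reduces to proving that each summand on the right vanishes identically once $m$ is large.

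The induction runs over $n = 1, 2, \ldots, r$ and eliminates the summand on $\Omega_{\tilde w_n}$ using the $\GL_n$-twist. At step $n$ I would substitute each of $W = B_m(\cdot,f)$ and $W = B_m(\cdot,f_0)$ into the Shimura-type integral (\ref{eq1.1}) against an irreducible generic $\tau$ of $\GL_n(F)$, and choose the Schwartz data $\phi \in \CS(F^n)$ and the section $\xi_s \in \wt{V}(s,\tau,\psi^{-1})$ so that, via Lemma~\ref{lem2.7}, the integration concentrates on a neighborhood of the Bruhat cell $C(\tilde w_n)$ inside $Q_n \tilde w_n Q_n$. The equality $\gamma(s, \pi \times \tau, \psi) = \gamma(s, \pi_0 \times \tau, \psi)$ (applied to both sides of the functional equation for the difference), combined with the inductively established vanishing of $B_m(\cdot, f_{\tilde w_i})$ for $i<n$, forces $B_m(a \tilde w_n, f_{\tilde w_n}) = 0$ for every $a \in A_{\tilde w_n}$. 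Theorem~\ref{thm2.8}(1) then replaces $f_{\tilde w_n}$ by a function supported in $\Omega_{\tilde w_n} - C(\tilde w_n)$ with the same partial Bessel function, and Theorem~\ref{thm2.8}(2) absorbs this residual support into cells indexed by $\tilde w_j$ with $j > n$, restoring a decomposition of the original shape and completing the inductive step. After $n = r$ no summands remain, so $B_m(\cdot, f) = B_m(\cdot, f_0)$; standard matrix-coefficient / Whittaker-model arguments for supercuspidals then deliver $\pi \cong \pi_0$.

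The main obstacle will be the clean extraction of $B_m(a \tilde w_n, f_{\tilde w_n})$ at each inductive stage. Concretely, one must design $\phi$ and $\xi_s$ so that, after the change of variable by $w_{r-n,n}$, the integral (\ref{eq1.1}) of the difference of partial Bessel functions collapses, modulo the previously eliminated terms, to a nonzero scalar multiple of the sought Bessel value on $A_{\tilde w_n} \tilde w_n$. This requires an explicit Bruhat-type decomposition of $X_n \cdot R^{r,n} \cdot (U^n \backslash \Sp_{2n})$ together with precise control---via the support properties of Howe vectors encoded by $H_m$---over which Bruhat cells $C(w) \subset Q_n \tilde w_n Q_n$ actually contribute. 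This is the analogue, at general rank, of the ad hoc low-rank computations in \cite{ChZ, Zh1, Zh2, Zh3}; the crucial point is that the Cogdell--Shahidi--Tsai theorem replaces the "Bessel function stability" arguments that break down beyond small rank, and verifying that its hypotheses apply uniformly across our inductive steps is where the bulk of the technical work will concentrate.
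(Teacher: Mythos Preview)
Your overall architecture matches the paper's exactly: Howe vectors, the Cogdell--Shahidi--Tsai machinery (Theorem~\ref{thm2.8}), and an induction on $n$ that successively eliminates the summands $B_m(\cdot,f_{\tilde w_n})$. However, the inductive step as you describe it has a genuine gap.

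You assert that establishing $B_m(a\tilde w_n,f_{\tilde w_n})=0$ for $a\in A_{\tilde w_n}$, followed by one application each of Theorem~\ref{thm2.8}(1) and (2), pushes the support into $\bigcup_{j>n}\Omega_{\tilde w_j}$. This fails: after removing $C(\tilde w_n)$ and passing to the nearest Bessel-supporting cells, the pieces land in $\Omega_{w'}$ for $w'\in B(G)$ with $d_B(w',\tilde w_n)=1$, i.e., with $\theta_{w'}=\Delta-\{\alpha_n,\alpha_j\}$ for some $j\ne n$. When $j<n$ such a $w'$ does \emph{not} dominate any $\tilde w_i$ with $i>n$, so there is nothing to absorb it into. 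The paper handles this by proving the much stronger statement $B_m(\bt_n(a)\tilde w_n,f_{\tilde w_n})=0$ for \emph{all} $a\in\GL_n(F)$ (Proposition~\ref{prop4.4}, using the Jacquet--Shalika lemma, Proposition~\ref{prop3.6}). Since every $w\in B(G)$ in the Bruhat interval $[\tilde w_n,w_{\max}]$ with $w_{\max}=w_\ell^{L_n}\tilde w_n$ (Lemma~\ref{lem2.7}) has the form $\bt_n(b)\tilde w_n$, this gives vanishing at every such $w$, and one can iterate Theorem~\ref{thm2.8} all the way up to $w_{\max}$. Only then do the cells with $d_B(w,w_{\max})=1$ satisfy $\theta_w\subset\{\alpha_{n+1},\dots,\alpha_r\}-\{\alpha_i\}$ for some $i>n$, hence $\Omega_w\subset\Omega_{\tilde w_i}$, and the absorption goes through.

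A second point: the step $n=r$ is not ``just the last inductive step'' but is treated separately (Section~\ref{sec7}), both because the zeta integral has a different shape (no $R^{r,n}$ or $X_n$, and $\phi$ is evaluated at $e_r$ rather than integrated) and because there is nowhere left to absorb into: one must show $B_m(\cdot,f_{\tilde w_r})\equiv 0$ outright, which requires an additional argument combining Proposition~\ref{prop5.2} with Lemma~\ref{lem3.5}(2) and a direct Bessel-support estimate.
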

When $r=1$, the above theorem is proved in \cite{ChZ, Zh3}.\\

We outline our proof of the above theorem. 

Let $\pi$ be an irreducible $\psi_U$-generic supercuspidal representation of $G=\Sp_{2r}(F)$ with central character $\omega$. We have $\CM(\pi)\subset C_c^\infty(G,\omega)$. We can consider the linear functional $\CM(\pi)\ra C^\infty(G,\psi_U,\omega)$, $f\mapsto W^f$ defined by
$$W^f(g)=\int_U \psi_U^{-1}(u)f(ug)du.$$
Since $\pi$ is assumed to be $\psi_U$-generic, this linear functional $f\mapsto W^f$ is non-zero. Thus we can take $f\in \CM(\pi)$ such that $W^f(1)=1$ and then consider the partial Bessel functions $B_m(g,f)$ for $m>0$. 

Let $\pi,\pi_0$ be two irreducible $\psi_U$-generic supercuspidal representations with the same central character $\omega$.  Given a positive integer $k$, denote by $\CC(k)$ the following condition:
  \begin{align*} &\gamma(s,\pi\times \tau, \psi)=\gamma(s,\pi_0\times \tau,\psi),\\
  & \textrm{ for all irreducible generic representations } \tau \textrm{ of } \GL_i(F),\\
  & \textrm{ and for all } i, \textrm{ with } 1\le i\le k.
  \end{align*}
  
  To make some statements cleaner, we will denote by $\CC(0)$ the condition that $\pi$ and $\pi_0$ have the same central character. The condition $\CC(0)$ is always assumed. 

We fix $f\in \CM(\pi),f_0\in \CM(\pi_0)$ such that $W^f(1)=W^{f_0}(1)=1$. We are going to show that the condition $\CC(r)$ implies $B_m(g,f)=B_m(g,f_0)$ for all $g\in G$ and $m$ large enough (which only depends on $f,f_0$). This will imply that $\pi\cong \pi_0$ by irreducibility and the uniqueness of Whittaker model. 

Corollary \ref{cor2.9} says that the condition $\CC(0)$ implies there exist functions $f_{\tilde w_i}\in \Omega_{\tilde w_i}$ for $1\le i\le r$ such that 
 \begin{align}\label{eq2.5}B_m(g,f)-B_m(g,f_0)=\sum_{i=1}^r B_m(g,f_{\tilde w_i}),\end{align}
for all $g\in G$ and large enough $m$ depending on $f,f_0$ only. In $\S$4, we will use induction to show that for $1\le k\le r-1$ the condition $\CC(k)$ implies that there exist functions $f_{\tilde w_i}'\in C_c^\infty(\Omega_{\tilde w_i},\omega)$ with $ k+1\le i\le r$ such that 
$$B_m(g,f)-B_m(g,f_0)=\sum_{i=k+1}^r B_m(g,f'_{\tilde w_i}). $$ 
Thus the condition $\CC(r-1)$ implies that $$B_m(g,f)-B_m(g,f_0)=B_m(g,f_{\tilde w_r}),$$
for some $f_{\tilde w_r}\in C_c^\infty(\Omega_{\tilde w_r},\omega)$. In $\S$5, we will show that the condition $\CC(r)$ implies that $B_m(g,f)-B_m(g,f_0)=0$ for all $g\in G$ and $m$ large. We have to deal with the cases $1\le k\le r-1$ and $k=r$ separately because the local zeta integrals in these two cases are different, see Eq.(\ref{eq1.1}).

\section{Preparations for the proof of the main theorem}\label{sec5}
\subsection{Sections of induced representations}
Let $\tau$ be an irreducible generic representation of $\GL_n(F)$ and $\psi$ be an unramified character of $F$. In this section, we construct certain sections in the induced representation $\tilde I(s,\tau,\psi^{-1})$ which will be used in the proof of our main theorem. These sections appeared in \cite{ChZ} when $n=1$ and in \cite{Zh2} when $n=2$. The ideas of these constructions go back to \cite{Ba95}.

Recall that $P_n=M_nN_n$ denotes the Siegel parabolic subgroup of $\Sp_{2n}(F)$. Let $\ov{P}_n=M_n\ov{N}_n$ be the opposite of $P_n$. For $b\in \Mat_{n\times n}(F)$ with $bJ_n=J_n{}^t\! b$, denote 
$$\bar \bn_n(b)=\begin{pmatrix}I_n & \\ b &I_n \end{pmatrix}.$$
Then $\ov{N}_n=\wpair{\bar \bn_n(b)| b\in \Mat_{n\times n}(F), b J_n=J_n {}^t\! b}.$
For a positive integer $i$
$$ \ov{N}_{n,i}=\wpair{\bar \bn_n(b)| \bar \bn_r\begin{pmatrix}0&0\\ b&0 \end{pmatrix}\in H^r_i }.$$ 
Let $D$ be an open compact subgroup of $N_n$ and $i$ be a positive integer. For $x\in D$, we consider the set 
$$S(x,i)=\wpair{ \bar y\in \ov{N}_n| \bar yx\in P_n\cdot \ov{N}_{n,i}}. $$
For a positive integer $c$, denote $K_c^{\GL_n}=I_n+\Mat_{n\times n}(\fp^c)$. 
\begin{lem}\label{lem3.1}
The following statements hold. 
\begin{enumerate}
\item For any positive integer $c$, there exists an integer $i_1=i_1(D,c)$ such that for all $i\ge i_1, x\in D, \bar y\in S(x,i)$, we can write 
$$\bar yx=\bn_n(b)\bm_n(a) \bar y_0, $$
with $\bn_n(b)\in N_n, a\in K_c^{\GL_n}$ and $\bar y_0\in \ov{N}_{n,i}$.
\item There exists an integer $i_2=i_2(D)$ such that $ S(x,i)=\ov{N}_{n,i}$ for all $x\in D$ and $i\ge i_2$.
\end{enumerate}
\end{lem}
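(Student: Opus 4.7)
The plan is to exploit the block-matrix description of the open Bruhat cell $P_n\overline{N}_n\subset\Sp_{2n}(F)$. Writing $x=\bn_n(b_x)$, $\bar y=\bar\bn_n(y)$, $\bar y_0=\bar\bn_n(y_0)$, a direct $2\times 2$ block multiplication (with $\bm_n(a)=\diag(a,a^*)$) shows that the identity $\bar y\, x=\bn_n(b)\,\bm_n(a)\,\bar y_0$ is equivalent to the system
\[
a^* = I_n + y b_x,\qquad y = a^*\, y_0,\qquad b = b_x (a^*)^{-1}.
\]
Thus the decomposition exists precisely when $I_n+yb_x$ is invertible, in which case one may invert the middle relation to
\[
y_0 = (I_n+yb_x)^{-1}y,\qquad y = y_0(I_n - b_x y_0)^{-1},
\]
so $a$, $b$, $y$ and $y_0$ determine each other once $b_x$ is fixed. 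I would first record these formulas and observe that $\overline{N}_{n,i}$ is a neighborhood basis of the identity in $\overline{N}_n$, while the entries of $b_x$ have uniformly bounded valuation as $x$ varies in the compact set $D$.

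For part (1), take $\bar y\in S(x,i)$, so that the matching $y_0$ lies in the small lattice defining $\overline{N}_{n,i}$ while $b_x$ stays in a bounded set. Using $y=y_0(I_n - b_x y_0)^{-1}$ and expanding the geometric series (which converges ultrametrically once $i$ is large), one sees $y-y_0$ is of order $b_x y_0^2$, hence $y$ itself is small. Then $a^* = I_n+yb_x\in I_n+\Mat_{n\times n}(\fp^c)$, which is equivalent to $a\in K_c^{\GL_n}$ since $a\mapsto a^*$ is an entry-permuting homeomorphism of $\GL_n(F)$ that preserves congruence conditions, and inversion preserves $I_n+\Mat_{n\times n}(\fp^c)$ by another geometric series. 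Choosing $i_1=i_1(D,c)$ large enough for the required bound delivers the claim.

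For part (2), the inclusion $\overline{N}_{n,i}\subset S(x,i)$ runs the same formulas in reverse: given $\bar y\in\overline{N}_{n,i}$ and $x\in D$, the block $I_n+yb_x$ is invertible (for $i$ large depending only on $D$), so $\bar y\, x\in P_n\overline{N}_n$, and $y_0=y(I_n+yb_x)^{-1}$ differs from $y$ by a term quadratic in $y$, hence lies in the same lattice. Conversely, given $\bar y\in S(x,i)$, the element $y_0$ is small and $y=y_0(I_n-b_xy_0)^{-1}$ differs from $y_0$ by a quadratic correction, so $\bar y\in\overline{N}_{n,i}$. A single $i_2=i_2(D)$ dominating both quadratic lattice estimates then gives $S(x,i)=\overline{N}_{n,i}$ for all $i\geq i_2$ and $x\in D$.

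The only delicate point is matching the fractional-ideal description of $\overline{N}_{n,i}$ (coming from the definition via $H^r_i$, whose exponents $(2\rht(\beta)\pm 1)i$ are those recorded in Lemma \ref{lem2.1}) against the quadratic corrections $yb_xy$ or $y_0 b_x y_0$. Since the valuations of the entries of $b_x$ are uniformly bounded on $D$, this is a finite bookkeeping check and presents no conceptual obstacle beyond verifying that the various shrinking lattices dominate each other in the required sense.
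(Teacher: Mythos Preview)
Your approach is correct and essentially identical to the paper's. Both arguments perform the same $2\times 2$ block computation and then use a geometric-series expansion together with the uniform boundedness of $b_x$ on $D$ to control the lattice membership. The paper organizes the computation as $\bar y^{-1}p = x\,\bar\bn_n(b_0)$ and reads off directly $a = I_n + b_x b_0$ (so part (1) follows without passing through $a^*$ or first bounding $y$), and then for part (2) it writes out explicitly the entrywise valuations of $b_0$, $b_x b_0$, and $b_0(b_x b_0)^k$ to verify that the geometric series lands back in the non-uniform lattice defining $\ov{N}_{n,i}$; this explicit check is exactly the ``finite bookkeeping'' you allude to in your last paragraph.
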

See \cite[Lemma 4.1]{Ba95} for a similar statement for the group $\GL_n$.
\begin{proof}

For $x\in D, \bar y\in S(x,i)$, we assume that $\bar yx=p\bar \bn_n(b_0)^{-1}$ for $p\in P_n, \bar \bn_n(b_0)\in \ov{N}_{n,i}$. We have $\bar y^{-1}p=x\bar \bn_n(b_0)$. By abuse of notation, we write $\bar y^{-1}=\bar \bn_n(y),y\in \Mat_{n\times n}(F)$. Let $p=\bn_n(b')\bm_n(a)$. We have 
$$ \bar y^{-1}p=\begin{pmatrix}a& b'a^*\\ ya & (yb'+I_n)a^* \end{pmatrix}.$$
On the other hand, we denote $x=\bn_n(b)$ with $b\in \Mat_{n\times n}(F)$. We have 
$$x \bar \bn_n(b_0)=\begin{pmatrix}I_n+bb_0 & b\\ b_0 &I_n \end{pmatrix}.$$
From the equality $ \bar y^{-1}p=x\bar \bn_n(b_0)$, we get 
$$a=I_n+bb_0, \textrm{ and }  y=b_0a^{-1}=b_0(I_n+bb_0)^{-1}.$$
Since the entries of $b$ are bounded and the entries of $b_0$ go to zero as $i\ra \infty$, it follows that for any positive integer $c$, we can take $i_1=i_1(D,c)$ such that if $i\ge i_1$, we have $a\in K_c^{\GL_n}$. This proves (1). 

Next we show that $\bar y^{-1}=\bar \bn_n(y)\in \ov{N}_{n,i}$. We have $ y=b_0a^{-1}=b_0(I_n+bb_0)^{-1}$. Since $\bar \bn_n(b_0)\in \ov{N}_{n,i}$, we have 
$$b_0\in \begin{pmatrix}\fp^{(4r-2n+1)i} & \fp^{(4r-2n-1)i}& \dots &\fp^{(4r-4n+3)i} \\ \dots &\dots & \dots &\dots \\ \fp^{(4r-1)i}& \fp^{(4r-3)i} &\dots &\fp^{(4r-2n+1)i} \end{pmatrix}.$$
 Since $b$ has bounded entries, we assume that $b\in \Mat_{n\times n}(\fp^{-d})$ for a positive integer $d$. Then we have 
 $$bb_0\in \begin{pmatrix}\fp^{(4r-4n+3)i-d} & \fp^{(4r-4n+3)i-d} &\dots & \fp^{(4r-4n+3)i-d}\\ \dots &\dots & \dots & \dots  \\ \fp^{(4r-2n+1)i-d}& \fp^{(4r-2n+1)i-d} &\dots &\fp^{(4r-2n+1)i-d}\end{pmatrix}. $$
 Thus we have 
 $$ b_0bb_0\in \begin{pmatrix}\fp^{(8r-6n+4)i-d} & \fp^{(8r-6n+4)i-d} &\dots & \fp^{(8r-6n+4)i-d}\\ \dots &\dots & \dots & \dots  \\ \fp^{(8r-4n+2)i-d}& \fp^{(8r-4n+2)i-d} &\dots &\fp^{(8r-4n+2)i-d}\end{pmatrix}. $$
We can take an integer $i'_2(D)$ such that for $i\ge i'_2(D)$
$$ b_0bb_0 \in \begin{pmatrix}\fp^{(4r-2n+1)i} & \fp^{(4r-2n-1)i}& \dots &\fp^{(4r-4n+3)i} \\ \dots &\dots & \dots &\dots \\ \fp^{(4r-1)i}& \fp^{(4r-3)i} &\dots &\fp^{(4r-2n+1)i} \end{pmatrix},$$
i.e., $\bar \bn_n(b_0bb_0)\in \ov{N}_{n,i}$. One can check that the same argument implies that for $i\ge i_2(D)$, we have $\bn_n(b_0(bb_0)^k)\in \ov{N}_{n,i}$. Since 
$$ y=b_0(I_n+bb_0)^{-1}=b_0-b_0(bb_0)+b_0(bb_0)^2+\cdots $$
 we get $\bar \bn_n(y)\in \ov{N}_{n,i}$. This shows that $ S(x,i)\subset \ov{N}_{n,i}$. 
 
 We next show that there exists an integer $i_2''(D)$ such that $\ov{N}_{n,i}\subset S(x,i)$ for $i\ge i_2''(D)$. We write $x=\bn_n(b)$ as above. For $\bar \bn(b_0)\in \ov{N}_{n,i}$, we want to show that $\bar \bn(b_0)\in S(x,i)$. For $i$ large, we can assume the $\det(I_n+b_0b)\ne 0$. From this assumption, we can check that $\bar \bn(b_0) x\in P\cdot \ov{N}$. Thus we can write 
 $$\bar \bn(b_0) x=p  \bar \bn_n(y). $$
 We need to show that $\bar \bn_n(y)\in \ov{N}_{n,i}$ for $i$ large. The argument is the same as the proof of $S(x,i)\subset \ov{N}_{n,i}$. We omit the details.
\end{proof}

Note that the double cover $\wt{\Sp}_{2n}$ splits over $N_n$ but not over $\ov{N}_n$ in general.  Let $K_0^n=\Sp_{2n}(\fo)$ and $K_m^n=(1+\Mat_{(2n)\times (2n)}(\fp^m))\cap K$. Since in this subsection we only consider subgroups of $\Sp_{2n}$, we will drop $n$ from the notation $K_m^n$ and write it as $K_m$. It is known that there exists an integer $m_0\ge 0$ such that the double cover $\wt{\Sp}_{2n}(F)\ra \Sp_{2n}(F)$ splits over $K_{m_0}$, see \cite[Lemma 3, p.959]{K}, i.e., there exists a continuous open map $s: K_{m_0}\ra \wt{\Sp}_{2n}(F)$ such that it is a group homomorphism and the composition $K_{m_0}\ra \wt{\Sp}_{2n}(F)\ra \Sp_{2n}(F)$ is the inclusion map. When the residue characteristic is odd, we can take $m_0=0$, see \cite[p.43]{MVW}, and the splitting is known to be unique, see \cite[p.1662]{GS} for example. In general, there exists an integer $m_1>m_0$ such that $s|_{K_{m_1}}$ is unique. In fact, a splitting $s:K_{m_0}\ra \wt{\Sp}_{2n}(F)$ has the form $s(k)=(k,\epsilon(k))$ for a cocylce $\epsilon: K_{m_0}\ra \wpair{\pm 1}$. From this one can check that any two splittings differ by a quadratic character of $K_{m_0}$. Thus it suffices to show that any quadratic character of $K_{m_0}$ vanishes on $K_{m_1}$ for certain $m_1\ge m_0$. To do so, we consider the square map $K_{m_0}\ra K_{m_0}, x\mapsto x^2$. The image of the square map is open and thus contains a $K_{m_1}$ for some $m_1\ge m_0$. It's clear that any quadratic character of $K_{m_0}$ is trivial on $K_{m_1}$.

Let $(\tau,V_\tau)$ be an irreducible generic representation of $\GL_n(F)$. For $i>0$ and $v\in V_\tau$, we consider the function $f_s^{i,v}:\wt{\Sp}_{2n}(F)\ra V_\tau$ by 
$$ f_s^{i,v}(\tilde g)=\left\{\begin{array}{lll}\epsilon\gamma^{-1}_{\psi^{-1}}(\det(a))\delta_{P_n}^{1/2}(a)|\det(a)|^{s-1/2}\tau(a)v, &\textrm{ if } \tilde g=(\bn_n(b)\bm_n(a) , \epsilon)s(\bar y) , \bn_n(b)\in N_n, \\
& a\in \GL_n(F), \bar y\in \ov{N}_{n,i},\epsilon\in \wpair{\pm 1}.\\
0, &\textrm{ otherwise.}  \end{array}\right.$$

\begin{lem}\label{lem3.2}
For any fixed $v\in V_\tau$, there exists an integer $i_0(v)$ such that if $i\ge i_0(v)$, $f_s^{i,v}$ defines an element in $\tilde I(s,\tau,\psi)$.
\end{lem}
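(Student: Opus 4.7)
The plan is to verify three things: $f_s^{i,v}$ is (a) unambiguously defined, (b) satisfies the left quasi-invariance built into the definition of $\tilde I(s,\tau,\psi^{-1})$, and (c) smooth in the sense of being locally constant on $\wt{\Sp}_{2n}(F)$. For (a) and (b), uniqueness of the big-cell decomposition $P_n \cdot \ov{N}_n$ shows that each $\tilde g \in \tilde P_n \cdot s(\ov{N}_{n,i})$ has a unique expression $(\bn_n(b)\bm_n(a),\epsilon) s(\bar y)$, and the quasi-invariance follows from a direct computation using the Rao cocycle on $\wt{M}_n$ (which is essentially the Hilbert symbol $(\det \cdot, \det \cdot)_F$) together with the identity $\gamma_{\psi^{-1}}(ab) = \gamma_{\psi^{-1}}(a)\gamma_{\psi^{-1}}(b)(a,b)_F$. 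One needs $i \ge m_1$ so that $\ov{N}_{n,i} \subset K_{m_1}$ and the splitting $s$ is a uniquely determined homomorphism on the relevant subgroups.

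For (c), the main content, the plan is to produce an open compact $K' \subset \Sp_{2n}(F)$ such that $f_s^{i,v}$ is right-invariant under $s(K')$. Choose $c$ so large that $\tau(K_c^{\GL_n}) v = v$, that $\gamma_{\psi^{-1}}|_{1+\fp^c} \equiv 1$, and that all Hilbert symbols $(\cdot\,, 1+\fp^c)_F$ are trivial. Fix an open compact $D \subset N_n$ and set $i_0(v) = \max\{i_1(D,c), i_2(D), m_1\}$, with $i_1, i_2$ as in Lemma \ref{lem3.1}. For $i \ge i_0(v)$, choose $K' \subset K_{m_1}$ admitting an Iwahori decomposition $K' = (K' \cap \ov{N}_n)(K' \cap M_n)(K' \cap N_n)$ such that $K' \cap \ov{N}_n \subset \ov{N}_{n,i}$, $K' \cap M_n \subset K_c^{\GL_n}$ (taken small enough that conjugation by $K' \cap M_n$ preserves $\ov{N}_{n,i}$), and $K' \cap N_n \subset D$. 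For $\tilde g = (p, \epsilon) s(\bar y) \in \tilde P_n \cdot s(\ov{N}_{n,i})$ with $p = \bn_n(b)\bm_n(a)$ and $k' = \bar u m u \in K'$, since $s$ is a homomorphism on $K_{m_1}$ one computes
\begin{equation*}
\tilde g \cdot s(k') \;=\; (p,\epsilon) s(\bar y \bar u) s(m) s(u) \;=\; (pm, \epsilon)\, s(\bar y'' u),
\end{equation*}
where $\bar y'' := m^{-1}(\bar y \bar u) m \in \ov{N}_{n,i}$; the sign from moving $s(m)$ across $s(\bar y \bar u)$ is trivial by the choice of $c$. Now apply Lemma \ref{lem3.1}(1) to write $\bar y'' u = \bn_n(b')\bm_n(a')\bar y_0$ with $a' \in K_c^{\GL_n}$ and $\bar y_0 \in \ov{N}_{n,i}$, so that $\tilde g \cdot s(k') = (\bn_n(b''')\bm_n(a m a'), \epsilon)\, s(\bar y_0)$. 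Since $\tau(m a') v = v$, $|\det(m a')| = 1$, and $\gamma_{\psi^{-1}}(\det a \cdot \det(m a')) = \gamma_{\psi^{-1}}(\det a)$ by the choices of $c$, it follows that $f_s^{i,v}(\tilde g \cdot s(k')) = f_s^{i,v}(\tilde g)$. For $\tilde g$ outside the support, the same calculation combined with Lemma \ref{lem3.1}(2) gives $P_n \ov{N}_{n,i} K' = P_n \ov{N}_{n,i}$, so both values vanish.

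The main obstacle is the careful bookkeeping of the metaplectic $\pm 1$-signs --- the Rao cocycle on $\wt{M}_n$, the quadratic defect of $\gamma_\psi$, and the sign of $s|_{M_n \cap K'}$ --- throughout the computation. Each of these is a quadratic character which is trivial on a small enough neighborhood of $I_n$, so the strategy is simply to enlarge $c$ until all such signs become trivial on $K_c^{\GL_n}$, after which the argument reduces to a clean application of Lemma \ref{lem3.1}.
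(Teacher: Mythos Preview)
Your proposal is correct and follows essentially the same approach as the paper: verify left $\tilde P_n$-equivariance directly from the Rao cocycle and the quadratic property of $\gamma_{\psi^{-1}}$, then prove right $s(K')$-invariance via an Iwahori decomposition of a small congruence subgroup, using Lemma~\ref{lem3.1} for the $N_n$-factor and killing all $\pm 1$ signs by taking $c$ large enough that $1+\fp^c\subset F^{\times,2}$. The only organizational difference is that the paper treats the three Iwahori factors $\ov N_n\cap K_m$, $M_n\cap K_m$, $N_n\cap K_m$ one at a time (with $D=N_n\cap K_c$), whereas you process $k'=\bar u\,m\,u$ in a single computation; both arguments are equivalent once one checks that conjugation by $m\in M_n\cap K'$ preserves $\ov N_{n,i}$ and that $s(\bn_n(b')\bm_n(a'))=(\bn_n(b')\bm_n(a'),1)$ for $a'\in K_c^{\GL_n}$.
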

\begin{proof}
In the group $\wt{\Sp}_{2n}(F)$ we have
\begin{align*}&(\bn_n(b_0)\bm_n(a_0),\epsilon_0)(\bn_n(b)\bm_n(a),\epsilon)\\
=&(\bn_n(b_0) \bm_n(a_0) \bn_n(b)\bm_n(a_0)^{-1},1) (\bm_n(a_0a),\epsilon_0 \epsilon (\det(a_0),\det(a))_F).
\end{align*}
On the other hand, we have $\gamma_{\psi^{-1}}^{-1}(\det(a_0a))=\gamma_{\psi^{-1}}^{-1}(\det(a_0))\gamma_{\psi^{-1}}^{-1}(\det(a))(\det(a_0),\det(a))_F$. From these formulas, we can check that
 $$f_s^{i,v}((\bn_n(b_0) \bm_n(a_0), \epsilon_0)\tilde g)=\epsilon_0 \gamma_{\psi^{-1}}^{-1}(\det(a_0)) \delta_{P_n}^{1/2}(a_0)|\det(a_0)|^{s-1/2}f_s^{i,v}(\tilde g). $$

 Next we need to check that $f_{s}^{i,v}$ is right invariant under an open compact subgroup of $\wt{\Sp}_{2n}(F)$. For $v\in V_\tau$, we can find a positive integer $c$ such that $v$ is fixed by $1+M_{n\times n}(\fp^c)\subset \GL_n(F)$ by smoothness of $\tau$. We require $c$ large such that $1+\fp^c\subset F^{\times ,2}$ (such $c$ exists since $F^{\times,2}$ is an open subgroup of $F^\times$). Let $i_0(v)=\max\wpair{c, i_1(K_c\cap N_n, c ), i_2(K_c\cap N_n)}$ and $\ov{N}_{n,i_0}\subset K_{m_1}$. Now take an integer $i$ such that $i\ge i_0$.  Let $m\ge m_1$ be large such that $\ov{N}_n\cap K_m\subset \ov{N}_{n,i}$. Note that $m\ge m_1$ implies that the splitting $s:K_{m}\ra \wt{\Sp}_{2n}(F)$ exists and is unique. On the other hand, since $m_1$ is fixed, $m$ only depends on $i$.
 
 We will show that $f_{s}^{i,v}(\tilde g s(k))=f_s^{i,v}(\tilde g)$ for all $k\in K_m$, for $i\ge i_0(v)$. We have the decomposition 
 $$K_m= (N_n\cap K_m )(M_n\cap K_m)(\ov{N}_n \cap K_m).$$
 For $k\in \ov{N}_n\cap K_m\subset \ov{N}_{n,i}$, we have $ f_{s}^{i,v}(\tilde g s(k))=f_s^{i,v}(\tilde g)$ by the definition of $f_s^{i,v}$. 
 
 For $\bm_n(a)\in M_n\cap K_m$, we have $\det(a)\in 1+\fp^m\subset F^{\times,2}$. Thus $c(\bm_n(a),\bm_n(a_0))=(\det(a),\det(a_0))_F=1$ for $\bm_n(a),\bm_n(a_0)\in M_n\cap K_m$. It follows that $a\mapsto (a,1)$ from $M_n\cap K_m$ to $\wt{\Sp}_{2n}(F)$ is a group homomorphism. By the uniqueness of the splitting, we get $s(\bm_n(a))=(\bm_n(a),1)$. 
 
 For $\bm_n(a_0)\in M_n\cap K_m$, and $\bn_n(b)\bm_n(a)\in P$, we have $c(\bn_n(b)\bm_n(a),\bm_n(a_0))=(\det(a),\det(a_0))_F=1$, and thus
 \begin{align*}
 (\bn_n(b)\bm_n(a),\epsilon)s(\bar y)s(\bm_n(a_0))&=(\bn_n(b)\bm_n(a) \bm_n(a_0),\epsilon)s( \bm_n(a_0)^{-1}\bar y \bm_n(a_0)).
 \end{align*}
 We can check that $\bm_n(a_0)^{-1}\bar y \bm_n(a_0)\in \ov{N}_{n,i} $. Thus 
 \begin{align*}
& f_{s}^{i,v}((\bn_n(b)\bm_n(a),\epsilon)s(\bar y)s(\bm_n(a_0)))\\
 =&\epsilon \gamma_{\psi^{-1}}^{-1}(\det(aa_0))\delta_P^{1/2}(aa_0)|\det(aa_0)|^{s-1/2}\tau(aa_0)v\\
 =&\epsilon \gamma_{\psi^{-1}}^{-1}(\det(a))\delta_P^{1/2}(a)|\det(a)|^{s-1/2}\tau(a)v\\
 =&f_{s}^{i,v}((\bn_n(b)\bm_n(a),\epsilon)s(\bar y)),
 \end{align*}
 where we used $|\det(a_0)|=1$ and $\tau(a_0)v=v$. 
 
 Next, we consider $k\in N_n\cap K_m\subset N\cap K_c$. By assumption and the above lemma, we get 
 $$S(k,i)=S(k^{-1},i)=\ov{N}_{n,i}.$$
 In particular, for $\bar y\in \ov{N}_{n,i}$, we have $\bar y k\in P_n\cdot \ov{N}_{n,i}$ and $\bar y k^{-1}\in P_n\cdot \ov{N}_{n,i}$. Thus $g\in P_n\cdot \ov{N}_{n,i}$ if and only if $gk\in  P_n\cdot \ov{N}_{n,i}$. Moreover, by (1) of last lemma, we can write $\bar y k=\bn_n(b_0)\bm_n(a_0) \bar y'$ with $a_0\in 1+\Mat_{n\times n}(\fp^c)$, which fixes $v$. From this, we can check that $f_s^{i,v}(\tilde g s(k))=f_s^{i,v}(\tilde g)$. 
\end{proof}

Let $i\ge i_0(v)$, we can consider the $\BC$-valued function $\xi_s^{i,v}$ on $\wt{\Sp}_{2n}\times \GL_n$ associated with $f_s^{i,v}$
$$\xi_s^{i,v}( \tilde g, a)=\lambda(\tau(a)f_s^{i,v}(\tilde g)),$$
for a fixed $\psi^{-1}$-Whittaker functional $\lambda$ of $\tau$.
We have 
\begin{equation}\label{eq3.1}
\xi_s^{i,v}(\tilde g,I_n)=\left\{\begin{array}{lll}\epsilon \gamma_{\psi^{-1}}^{-1}(\det(a))\delta_{P_n}^{1/2}(a)|\det(a)|^{s-1/2}W_v(a)&\textrm{ if } \tilde g=(\bn_n(b)\bm_n(a) , \epsilon)s(\bar y) , \\&\bn_n(b)\in N_n,  a\in \GL_n(F),\\ & \bar y\in \ov{N}_{n,i},  \epsilon\in \wpair{\pm 1}.\\
0, &\textrm{ otherwise.}  \end{array}  \right.
\end{equation}

Denote $\tilde \xi_{1-s}^{i,v}=M(s,\tau,\psi^{-1})\xi_s^{i,v}\in \tilde V(1-s,\tau^*,\psi^{-1})$. Let $D$ be an open compact subset of $N_n$, we evaluate $\tilde \xi_{s}^{i,v}(w_n x)$ for $x\in D$.
\begin{lem}\label{lem3.3}
There is an integer $I(D,v)\ge i_0(v)$ such that for all $i\ge I(D,v)$, we have $\tilde \xi_{1-s}^{i,v}(w_nx,I_n)=\vol(\ov{N}_{n,i})v$ for all $x\in D$. 
\end{lem}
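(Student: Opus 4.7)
The plan is to unfold the intertwining operator into an integral, change variables via $w_n^{-1}\bn_n(b)w_n=\bar\bn_n(-b)$ to move the integration over $N_n$ onto $\ov N_n$, and then use Lemma~\ref{lem3.1} both to truncate the integral to a small neighborhood of the identity and to reduce the integrand to a $b$-independent constant.

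Concretely, from the definition of $M(s,\tau,\psi^{-1})$ I would write
$$\tilde\xi_{1-s}^{i,v}(w_nx,I_n)=\int_{N_n}\xi_s^{i,v}\bigl(w_n^{-1}uw_n\cdot x,\,d_n\bigr)\,du.$$
Parametrizing $u=\bn_n(b)$, a direct matrix computation gives $w_n^{-1}\bn_n(b)w_n=\bar\bn_n(-b)$, so the integrand becomes $\xi_s^{i,v}(\bar\bn_n(-b)x,d_n)$. By formula (\ref{eq3.1}) this vanishes unless $\bar\bn_n(-b)\in S(x,i)$; invoking Lemma~\ref{lem3.1}(2) with the open compact $D$ produces an integer $i_2(D)$ with $S(x,i)=\ov N_{n,i}$ for every $i\ge i_2(D)$ and every $x\in D$. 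Hence for such $i$ the integration is truncated to $\bar\bn_n(-b)\in\ov N_{n,i}$, a set of measure $\vol(\ov N_{n,i})$.

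On this support I would apply Lemma~\ref{lem3.1}(1) to write
$$\bar\bn_n(-b)\,x=\bn_n(b_1)\bm_n(a_1)\bar y_1,\qquad a_1\in K_c^{\GL_n},\ \bar y_1\in\ov N_{n,i},$$
with $c$ chosen large enough that (i) $\tau(a_1)v=v$ for every $a_1\in K_c^{\GL_n}$; (ii) $1+\fp^c\subset F^{\times,2}$, which forces $|\det a_1|=1$, $\delta_{P_n}^{1/2}(a_1)=1$, and $\gamma_{\psi^{-1}}^{-1}(\det a_1)=1$; and (iii) $\bm_n(K_c^{\GL_n})$ lies in the subgroup $K_{m_1}\subset\Sp_{2n}(F)$ on which the splitting $s$ exists and is unique. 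Setting $I(D,v):=\max\{i_0(v),\,i_1(D,c),\,i_2(D)\}$, enlarged if necessary to force $\ov N_{n,i}\subset K_{m_1}$ as well, formula (\ref{eq3.1}) then collapses the integrand to $\epsilon\cdot\lambda(\tau(d_n)v)$ for a metaplectic sign $\epsilon$ independent of both $b$ and $x$.

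The main obstacle is showing that $\epsilon=+1$. One must compare the canonical lift $(\bar\bn_n(-b)x,1)\in\wt\Sp_{2n}(F)$ with the product $(\bn_n(b_1),1)(\bm_n(a_1),1)\,s(\bar y_1)$; because $\wt\Sp_{2n}$ splits canonically over $N_n$, the $\bn_n(b_1)$ factor contributes trivially, and the discrepancy reduces to the Rao cocycle $c(\bm_n(a_1),\bar y_1)$. Condition (iii) places both entries inside $K_{m_1}$, where $s$ is a uniquely determined group homomorphism, so this residual cocycle is trivial and $\epsilon=1$. Integrating the resulting constant over a set of volume $\vol(\ov N_{n,i})$ yields the stated identity, with the ``$v$'' on the right-hand side understood through the Whittaker functional $\lambda$ in accordance with the paper's conventions.
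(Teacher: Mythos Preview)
Your proposal is correct and follows essentially the same approach as the paper: unfold the intertwining operator, use $w_n^{-1}N_nw_n=\ov N_n$ to transport the integral to $\ov N_n$, invoke Lemma~\ref{lem3.1}(2) to truncate to $\ov N_{n,i}$ and Lemma~\ref{lem3.1}(1) to reduce the integrand to a constant. The only cosmetic difference is that the paper moves $d_n$ into the first argument (writing $\xi_s^{i,v}(d_nw_n^{-1}uw_nx,I_n)$), while you keep $d_n$ in the second slot; your bookkeeping of the metaplectic sign via the splitting $s$ on $K_{m_1}$ is in fact more careful than the paper's own treatment, and your final scalar $\lambda(\tau(d_n)v)$ is exactly $W_v^*(I_n)$, which the paper abbreviates as ``$v$''.
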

\begin{proof}
Let $d_n=\diag(-1,1,-1,\dots)\in \GL_n$. Recall that 
 \begin{align*}\tilde \xi_{1-s}^{i,v}(w_nx,I_n)&=\int_{N_n}\xi_s^{i,v}(w_n^{-1}uw_n x, d_n )du \\
                                                                     &=\int_{N_n}\xi_s^{i,v}(d_n w_n^{-1}uw_n x, I_n )du.\end{align*}
Let $c$ be a positive integer such that $v$ is fixed by $1+\Mat_{n\times n}(\fp^c)$ under the action of $\tau$, and $1+\fp^c\subset F^{\times ,2}$.   Let $I(D,v)=\max\{ i_0(v), i_1(D,c), i_2(D)\}$.   By definition of $\xi_s^{i,v}$ and Lemma \ref{lem3.1}, 
$ \xi_s^{i,v}(d_n w_n^{-1}uw_n x, I_n )\ne 0$ if and only if $d_n w_n^{-1} u w_n x \in P_n \cdot \ov{N}_{n,i}$ if and only if $d_n w_n^{-1}uw_n \in \ov{N}_{n,i}$.    On the other hand, for $d_nw_n^{-1}uw_n \in \ov{N}_{n,i}=S(x,i) $, by Lemma \ref{lem3.1}(1), we can write 
$$ d_nw_n^{-1}uw_n=\bn_n(b)\bm_n(a)\bar y, \textrm{ with } a\in 1+\Mat_{n\times n}(\fp^c), \bar y\in \ov{N}_{n,i},$$
and thus $$\xi_s^{i,v}(d_n w_n^{-1}uw_n x, I_n )=\tau(a)v=v,$$
where we used $\gamma_{\psi^{-1}}^{-1}(\det(a))=1$ (since $\det(a)\in 1+\fp^c\subset F^{\times,2}$), and $|\det(a)|=1$. We now get 
$$\xi_s^{i,v}(d_nw_n^{-1}uw_n, I_n)=\left\{ \begin{array}{lll}v, & \textrm{ if } d_nw_n^{-1}uw_n\in \ov{N}_{n,i}\\0, & \textrm{ otherwise}. \end{array}\right.$$  
Now the assertion follows.                                                   
\end{proof}
Since $ \tilde \xi_{1-s}^{i,v} \in\tilde V(1-s,\tau^*,\psi^{-1})$, we get 
\begin{equation}\label{eq3.2}
\tilde \xi_{1-s}^{i,v}((\bn_n(b) \bm_n(a),\epsilon) (w_nx, 1) )=\vol(\ov{N}_{n,i})\epsilon  \delta_{P_n}^{1/2}(a)|\det(a)|^{1/2-s}\gamma_{\psi^{-1}}^{-1}(\det(a)) W^*_v(a),
\end{equation}
for $x\in D, i\ge I(D,v)$, where $W_v^*$ is the Whittaker function of the representation $\tau^*$ associated with $v$. 

\subsection{The Schwartz functions $\phi_{m,r}^n$, $n<r$} 
As usual we fix the positive integer $r$ and the group $G=\Sp_{2r}(F)$. For positive integers $m,n$ with $1\le n<r$, we consider the function $\phi_{m,r}^n\in \CS(F^n)$ defined by
$$\phi_{m,r}^n(x_1,\dots,x_n)=\Char_{\fp^{(2r-1)m}}(x_1)\Char_{\fp^{(2r-3)m}}(x_2)\cdots \Char_{\fp^{(2r-2n+1)m}}(x_n).$$
Recall that $U_m=H_m\cap U^r$. Denote $$N_{n,m}'=\wpair{\bn_n(b)| \bn_{r} \begin{pmatrix} 0&b\\ 0_{(r-n)\times (r-n)}&0 \end{pmatrix} \in U_m}. $$
As usual, let $\psi$ be an unramified additive character of $F$. We then have the Weil representation $\omega_{\psi^{-1}}$ of $\wt{\Sp}_{2n}$ on $\CS(F^n)$. In the following, we will write $\phi_{m,r}^n$ as $\phi_m^n$ since $r$ is fixed.

\begin{lem}\label{lem3.4} We have $\omega_{\psi^{-1}}(\bn_n(b))\phi^n_m=\phi^n_m$ for all $\bn_n(b)\in N_{n,m}'.$
\end{lem}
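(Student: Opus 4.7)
The first move is to apply the explicit Weil representation formula for the Siegel unipotent given among the formulas for $\omega_\psi$ in Section \ref{sec3} (with $\psi$ replaced by $\psi^{-1}$):
\[
\omega_{\psi^{-1}}(\bn_n(b))\phi_m^n(\xi)=\psi^{-1}(\xi J_n\,{}^t\!b\,{}^t\!\xi)\,\phi_m^n(\xi).
\]
Since $\phi_m^n$ is the characteristic function of the box $\fp^{(2r-1)m}\times\fp^{(2r-3)m}\times\cdots\times\fp^{(2r-2n+1)m}$ and $\psi$ is unramified with conductor $\fo$, it suffices to prove that whenever $\xi=(\xi_1,\dots,\xi_n)$ satisfies $\xi_i\in\fp^{(2r-2i+1)m}$, the scalar $\xi J_n\,{}^t\!b\,{}^t\!\xi$ lies in $\fo$.

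Next I would convert the hypothesis $\bn_n(b)\in N'_{n,m}$ into explicit valuation bounds on the entries $b_{jk}$. The condition says that the image of $\bn_n(b)$ under the embedding $\Sp_{2n}(F)\hookrightarrow\Sp_{2r}(F)$ lies in $H_m$. Writing out the embedded matrix and matching it against the explicit entry-wise description of $H_m$ recalled in Section \ref{sec4}, each $b_{jk}$ is forced into a specific power of $\fp$ determined by the position of the corresponding entry in the ambient $2r\times 2r$ matrix.

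Finally, using $bJ_n=J_n\,{}^t\!b$, I would expand
\[
\xi J_n\,{}^t\!b\,{}^t\!\xi=\sum_{i,j}\xi_i\, b_{j,n+1-i}\,\xi_j,
\]
and add the three valuation exponents $\val(\xi_i)$, $\val(\xi_j)$, and $\val(b_{j,n+1-i})$ term by term. The $i$-- and $j$--dependences cancel against one another, leaving a constant exponent that is a strictly positive multiple of $m$ (thanks to $n<r$); hence every term lies in $\fp\subset\fo$. Consequently $\psi^{-1}(\xi J_n\,{}^t\!b\,{}^t\!\xi)=1$ and the desired invariance follows.

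I do not anticipate a conceptual obstacle here: the proof is essentially a bookkeeping exercise, and the only delicate point is to keep consistent the two block-decomposition conventions (the embedding $\Sp_{2n}\hookrightarrow\Sp_{2r}$ and the entry-pattern of $H_m$) so that the telescoping of the three exponents really does give a nonnegative power of $\fp$ independent of $i,j$. Once this cancellation is verified, the lemma is immediate from the Weil representation formula.
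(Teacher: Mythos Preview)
Your approach is correct and essentially identical to the paper's (which simply applies the Weil representation formula and asserts $xJ_n{}^t\!b\,{}^t\!x\in\fp^m$, omitting the valuation bookkeeping you outline). One small imprecision: the defining condition for $N'_{n,m}$ is not that the image of $\bn_n(b)$ under the standard embedding $\Sp_{2n}\hookrightarrow\Sp_{2r}$ lies in $H_m$, but rather that $\bn_r\left(\begin{smallmatrix}0&b\\0&0\end{smallmatrix}\right)\in U_m$ (this is the $w_{r-n,n}$-conjugate of the standard embedding); with the correct placement the three exponents telescope to exactly $m$ rather than to a quantity requiring $n<r$, but either way the conclusion follows.
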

\begin{proof}

We have $$\omega_{\psi^{-1}}(\bn_n(b))\phi_m(x)=\psi(xJ_n{}^t\! b {}^t\! x )\phi_m(x).$$
For $x\in \Supp(\phi_m), \bn_n(b)\in U_m$, we can check that $xJ_n{}^t\! b {}^t\! x\in \fp^m $ and thus the assertion follows. We omit the details. 
\end{proof}

\subsection{The Schwartz function $\phi_{m,r}^n$, $n=r$}

We consider the function $\phi_{m,r}^r\in \CS(F^r)$ defined by
$$\phi_{m,r}^r(x_1,\dots, x_r)=\Char_{\fp^{(2r-1)m}}(x_1) \Char_{\fp^{(2r-3)m}}(x_2)\cdots \Char_{\fp^{3m}}(x_{r-1}) \Char_{1+\fp^m}(x_r).$$
We will omit $r$ from the notation if $r$ is understood. Thus we will sometimes write $\phi_{m,r}^r$ as $\phi_m$.

Let $\psi$ be an unramified additive character of $F$, we then consider the Weil representation $\omega_{\psi^{-1}}$ of $\wt{\Sp}_{2r}$. 
\begin{lem}\label{lem3.5}
\begin{enumerate}
\item For $u\in N_r\cap H_m$, we have $\omega_{\psi^{-1}}(u)\phi_m=\psi_U^{-1}(u)\phi_m$.
\item Given $a=(a_{ij})\in \GL_r(F)$. If $a_{ri}\in \fp^{-(2i-1)m}$ for all $i$ with $1\le i\le r$, then $\omega(w_r)\phi_m(e_r a)\ne 0$.
\end{enumerate}
\end{lem}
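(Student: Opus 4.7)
Both parts of the lemma are direct computations using the explicit Weil representation formulas already recorded. The underlying point is that $\phi_m$ and $H_m$ are dually calibrated: the exponents defining $H_m$ are precisely reciprocal to those defining $\Supp(\phi_m)$, so that all quadratic and oscillatory contributions land inside the conductor $\fo$ of $\psi$.

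For part (1), I would apply the formula
$$\omega_{\psi^{-1}}(\bn_r(b))\phi_m(\xi) = \psi\bigl(-\xi J_r\,{}^t\! b\,{}^t\! \xi\bigr)\phi_m(\xi).$$
The only simple-root matrix entry of $\bn_r(b)$ that is nonzero is $u_{r,r+1}=b_{r,1}$, so $\psi_U^{-1}(\bn_r(b))=\psi(-b_{r,1})$, and the claim reduces to proving the congruence $\xi J_r\,{}^t\! b\,{}^t\! \xi \equiv b_{r,1} \pmod{\fo}$ for every $\xi \in \Supp(\phi_m)$. Expanding as $\sum_{i,j} x_i x_j b_{j,r+1-i}$, and reading off from the explicit matrix form of $H_m$ that $b_{j,r+1-i} \in \fp^{-(4r-2i-2j+1)m}$, while $x_i \in \fp^{(2r-2i+1)m}$ for $i<r$ and $x_r \in 1+\fp^m$, a term-by-term valuation check shows that every $(i,j)\ne(r,r)$ contribution lies in $\fo$. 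The surviving term $x_r^2 b_{r,1}$ equals $b_{r,1}$ modulo $\fp^m\cdot\fp^{-m}=\fo$, as desired.

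For part (2), I would apply
$$\omega_{\psi^{-1}}(w_r)\phi_m(e_r a) = \beta_{\psi^{-1}} \int_{F^r} \phi_m(x)\,\psi\bigl(-2 x J_r\,{}^t\!(e_r a)\bigr)\, dx,$$
with $x J_r\,{}^t\!(e_r a) = \sum_{i=1}^r x_i a_{r,r+1-i}$. The support condition on $\phi_m$ makes the integral factor as a product of one-variable integrals. For $1\le i\le r-1$, the integration is over $x_i\in\fp^{(2r-2i+1)m}$, and the hypothesis $a_{r,r+1-i}\in\fp^{-(2(r+1-i)-1)m}=\fp^{-(2r-2i+1)m}$ makes the character trivial on this ball, so the factor equals $\vol(\fp^{(2r-2i+1)m})>0$. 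For $i=r$, substituting $x_r=1+t$ with $t\in\fp^m$ peels off a constant $\psi(-2a_{r,1})$, and since $a_{r,1}\in\fp^{-m}$ the residual character $\psi(-2ta_{r,1})$ is trivial on $\fp^m$; that factor is $\psi(-2a_{r,1})\vol(\fp^m)\ne 0$. The total product is therefore nonzero.

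There is no substantive obstacle beyond careful bookkeeping with valuations and indices; the only subtlety is the precise matching between the Howe exponents appearing in $H_m$ and the support exponents defining $\phi_m$, which is exactly what forces each valuation inequality to saturate at the right place.
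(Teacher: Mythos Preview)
Your proof is correct and follows essentially the same approach as the paper's own proof: both parts are handled by the identical explicit Weil representation formulas, the same factorization of the oscillatory integral in (2), and the same reduction in (1) to checking that $\xi J_r\,{}^t b\,{}^t\xi\equiv b_{r,1}\pmod{\fo}$ via a term-by-term valuation estimate. You simply spell out more of the bookkeeping (the exponents of $b_{j,r+1-i}$ and the explicit substitution $x_r=1+t$) than the paper does.
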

Recall that $e_r=(0,0,\dots,0,1)\in F^r$. 
\begin{proof}
(1) Write $u=\bn_r(b)\in N_r\cap H_m$. Then we have 
$$b=(b_{ij})\in \begin{pmatrix} \fp^{-(2r-1)m} & \fp^{-(2r+1)m} & \dots & \fp^{-(4r-3)m}\\
 \dots & \dots & \dots & \dots \\
 \fp^{-3m} & \fp^{-5m} &\dots & \fp^{-(2r+1)m}\\
 \fp^{-m} & \fp^{-3m} &\dots &\fp^{-(2r-1)m} \end{pmatrix}.$$
We have 
$$\omega_{\psi^{-1}}(u)\phi_m(x)=\psi^{-1}(xbJ_r {}^t\! x)\phi_m(x).$$
For $x=(x_1,\dots,x_n)\in \Supp(\phi_m)$, we can check that $xbJ_r {}^t\! x \equiv b_{n1}x_n^2\equiv b_{n1}\mod \fo$. Thus 
$$\omega_{\psi^{-1}}(u)\phi_m=\psi^{-1}(b_{n1})\phi_m=\psi^{-1}_U(u)\phi_m. $$
(2) Denote $x=e_ra=(a_{r1},\dots ,a_{rr})\in F^r$. We have 
\begin{align*}
&\quad \omega_{\psi^{-1}}(w_n)\phi_m(x)\\
&=\beta_{\psi^{-1}} \int_{F^r}\phi_m(y)\psi^{-1}(2yJ_r{}^t\! x)dy\\
&=\beta_{\psi^{-1}} \int_{\fp^{(2r-1)m}}\psi^{-1}(2y_1a_{rr})dy_1 \int_{\fp^{(2r-3)m}}\psi^{-1}(2y_2a_{r(r-1)})dy_2 \dots \int_{1+\fp^m}\psi^{-1}(2y_r a_{r1})dy_r.
\end{align*}
Now the assertion follows from the fact that $\psi^{-1}$ has conductor $\fo$.
\end{proof}

\subsection{A result of Jacquet-Shalika}

\begin{prop}\label{prop3.6}
Let $W'$ be a smooth function on $\GL_n(F)$ which satisfies $W'(ug)=\psi(u)W'(g)$ for all $u\in U_{\GL_n}$ and for each $m$, the set $\wpair{g\in \GL_n| W'(g)\ne 0, |\det(g)|=q^m} $ is compact modulo $U_{\GL_n}$. Assume, for all irreducible generic representation $\tau$ of $\GL_n(F)$ and for all Whittaker functions $W\in \CW(\tau,\psi^{-1})$, the following integral vanishes for $\Re(s)<<0$, 
$$\int_{U_{\GL_n}\setminus \GL_n}W'(g)W(g)|\det(g)|^{-s-k}=0,$$
where $k$ is a fixed number, then $W'\equiv 0$.
\end{prop}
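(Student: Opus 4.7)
The result is a classical injectivity statement in the style of Jacquet--Shalika: the Whittaker models of all irreducible generic representations of $\GL_n(F)$ are jointly rich enough to detect any smooth $\psi$-Whittaker function satisfying the prescribed support condition. My plan proceeds by Iwasawa reduction, Casselman--Shalika, and Mellin inversion.

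\textbf{Iwasawa reduction.} Writing $\GL_n = U_{\GL_n} A K$ with $A$ the diagonal torus and $K = \GL_n(\fo)$, the hypothesis integral reads
$$\int_K \int_A W'(ak)\, W(ak)\, \delta_B^{-1}(a)\, |\det a|^{-s-k}\, da\, dk = 0,$$
where $\delta_B$ is the modulus character of the upper triangular Borel. By smoothness of $W'$, I fix an open compact $K_0 \subseteq K$ fixing $W'$ on the right, so the outer integral reduces to a finite sum over representatives of $K/K_0$.

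\textbf{Producing a rich family of Whittaker functions.} For an unramified generic principal series $\tau = \Ind(\chi_1,\dots,\chi_n)$, the Casselman--Shalika formula expresses $W_\chi(a)$ on $A$ as $\delta_B^{1/2}(a)\sum_{w} c_w(\chi)\,\chi^w(a)$, the sum running over the Weyl group. As $\chi = (\chi_1,\dots,\chi_n)$ varies over $n$-tuples of characters of $F^\times$, and after absorbing the twist $|\det a|^{-s-k}$ into $\chi$, the family $\{\chi^w\}$ sweeps out essentially all characters of $A$. Combined with right $K$-translation (which replaces $W$ by another member of $\CW(\tau,\psi^{-1})$) and convolution against arbitrary $\phi \in C_c^\infty(K)$, this builds a collection of Whittaker functions that, at each fixed right $K_0$-coset, can realize any prescribed character of $A$ in the $a$-variable.

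\textbf{Mellin inversion and conclusion.} Applying Tate's Mellin inversion coordinate-by-coordinate on $A \cong (F^\times)^n$ to the vanishing integrals obtained in the previous step forces $W'(ak_i) = 0$ for each coset representative $k_i \in K/K_0$ and every $a \in A$. Right $K_0$-invariance of $W'$ and the Iwasawa decomposition then imply $W' \equiv 0$.

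\textbf{Main obstacle.} The crux is cleanly disentangling the $K$- and $A$-dependencies, since a single Whittaker function couples them in a nontrivial way and a naive integration over $K$ loses information. The key decoupling device is convolution: applying the hypothesis to $\pi(\phi)W$ for varying $\phi \in C_c^\infty(K)$ isolates, distributionally, the contribution at each $K/K_0$ coset, after which Casselman--Shalika supplies the torus characters needed for Mellin inversion. Matching the conductor of the test $\tau$ with the invariance level $K_0$ of $W'$ — so that convolutions and right $K$-translates remain within the $K_0$-fixed subspace — is the essential bookkeeping, and is precisely the content of the original Jacquet--Shalika injectivity lemma.
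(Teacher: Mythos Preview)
The paper itself does not prove this proposition: it records that the statement is a corollary of \cite[Lemma~3.2]{JaS85}, with the deduction in the present form spelled out in \cite[Corollary~2.1]{Chen}. Your proposal attempts an independent sketch of the underlying mechanism, and while the overall strategy (reduce to the torus, produce enough test characters, invert) is in the right spirit, the specific tool you invoke does not do the job.

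The gap is the appeal to Casselman--Shalika. That formula computes the \emph{spherical} Whittaker function of an \emph{unramified} principal series, evaluated on torus elements; it requires all $\chi_i$ to be unramified and produces only the unique $K$-fixed Whittaker vector $W_0$. Right $K$-translation of $W_0$ returns $W_0$ itself, so neither translation nor convolution against $\phi\in C_c^\infty(K)$ can manufacture the ramified torus characters you would need for Mellin inversion once $W'$ is not right $K$-invariant. Conversely, if you let the $\chi_i$ be ramified (as your phrase ``varies over $n$-tuples of characters of $F^\times$'' suggests), Casselman--Shalika is no longer available. Either way the step ``Casselman--Shalika supplies the torus characters needed for Mellin inversion'' does not close, and the decoupling of the $K$- and $A$-variables that you flag as the main obstacle is not actually achieved by the argument as written.

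The Jacquet--Shalika proof takes a different route: it works with the full Whittaker model of (possibly ramified) principal series via the Jacquet integral, chooses sections supported in the big Bruhat cell so that the resulting Whittaker functions can be controlled explicitly, and proceeds by induction on $n$ through the $\GL_{n-1}\hookrightarrow\GL_n$ embedding rather than by a one-shot torus Mellin inversion. If you want a self-contained argument, that is the template to follow; otherwise, citing \cite{JaS85} and \cite{Chen} as the paper does is standard practice here.
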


This is a corollary of \cite[Lemma 3.2]{JaS85}. One can find an argument that \cite[Lemma3.2]{JaS85} implies the current form of the above proposition in \cite[Corollary 2.1]{Chen}.

\section{Inductive Step}\label{sec6}
We fix the notations as in $\S$2.5. To avoid ambiguity, we repeat part of the notations. We fixed two $\psi_U$-generic irreducible representations $\pi,\pi_0$ of $\Sp_{2r}(F)$ with the same central character $\omega$. Let $f\in \CM(\pi),f_0\in \CM(\pi_0)$ such that $W^f(1)=W^{f_0}(1)=1$. We have defined partial Bessel functions $B_m(g,f)$ and $B_m(g,f_0)$.

In this section, we will use induction to prove the following 
\begin{prop}\label{prop4.1}
Given an integer $n$ with $0\le n <r$. The condition $\CC(n)$ implies that there exist functions $f_{\tilde w_i}\in C_c^\infty(\Omega_{\tilde w_i},\omega), n+1\le i\le r$ such that 
$$B_m(g,f)-B_m(g,f_0)=\sum_{i=n+1}^rB_m(g,f_{\tilde w_i}),$$
for all $g\in G$ and all sufficiently large $m$ depending only on $f,f_0$.
\end{prop}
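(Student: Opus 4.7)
The plan is to induct on $n$. The base case $n=0$ is Corollary~\ref{cor2.9}. For the inductive step, suppose $1 \le n \le r-1$ and the statement holds for $n-1$. Since $\CC(n)$ implies $\CC(n-1)$, the inductive hypothesis furnishes $f_{\tilde w_i} \in C_c^\infty(\Omega_{\tilde w_i}, \omega)$ for $n \le i \le r$ with
$$B_m(g,f) - B_m(g,f_0) = \sum_{i=n}^{r} B_m(g, f_{\tilde w_i})$$
for $m$ large. A first observation: for $i \neq n$, Lemma~\ref{lem2.6} shows $\tilde w_i = w_{\Delta-\{\alpha_i\}}$ and $\tilde w_n$ are incomparable in the Bruhat order, so $C(\tilde w_n) \cap \Omega_{\tilde w_i} = \emptyset$. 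Hence $B_m(a \tilde w_n, f_{\tilde w_i}) = 0$ for all $a \in A_{\tilde w_n}$ and $i > n$, giving
$$B_m(a\tilde w_n, f) - B_m(a\tilde w_n, f_0) = B_m(a\tilde w_n, f_{\tilde w_n}).$$

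Next I would exploit the $\GL_n$-twisted gamma factor equality in $\CC(n)$. Plug the Howe vectors $(W^f)_m$, $(W^{f_0})_m$ into the zeta integral \eqref{eq1.1}, together with the section $\xi_s^{i,v} \in \wt{V}(s,\tau,\psi^{-1})$ from $\S$\ref{sec5} (for $i$ sufficiently large), the Schwartz function $\phi^n_m$, and an arbitrary irreducible generic representation $\tau$ of $\GL_n(F)$. Using the support of $\xi_s^{i,v}$ together with Lemma~\ref{lem3.4}, the integral $\Psi((W^f)_m, \phi^n_m, \xi_s^{i,v})$ should collapse onto a Mellin-type expression featuring $B_m(\bt_n(a)\tilde w_n, f)\cdot W_v(a)|\det a|^{s+c}$, while Lemma~\ref{lem3.3} lets us compute $\Psi((W^f)_m, \phi^n_m, M(s,\tau,\psi^{-1})\xi_s^{i,v})$ as an analogous integral involving a Whittaker function $W_v^*$ of $\tau^*$. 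Subtracting the identity for $(W^{f_0})_m$ and using the gamma factor equality, the functional equation forces the Mellin transform of $a \mapsto B_m(\bt_n(a)\tilde w_n, f-f_0)$ against $W_v$ to vanish for every such $\tau$ and $v$. Proposition~\ref{prop3.6} (Jacquet--Shalika) then concludes
$$B_m(\bt_n(a)\tilde w_n, f_{\tilde w_n}) = 0 \quad \text{for all } a \in \GL_n(F),$$
and in particular for $a$ varying so that $\bt_n(a)\tilde w_n$ sweeps out $A_{\tilde w_n}\tilde w_n$, once $m$ is sufficiently large.

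Theorem~\ref{thm2.8}(1) (with $w=\tilde w_n$) then yields $f'_{\tilde w_n} \in C_c^\infty(\Omega_{\tilde w_n} - C(\tilde w_n), \omega)$ with $B_m(g, f_{\tilde w_n}) = B_m(g, f'_{\tilde w_n})$ for $m$ large. A subsequent application of Theorem~\ref{thm2.8}(2), in the spirit of Corollary~\ref{cor2.9}, pushes $f'_{\tilde w_n}$ onto the Bessel-supporting cells in $\Omega_{\tilde w_n} - C(\tilde w_n)$, i.e.\ cells $C(w_\theta)$ with $w_\theta \in B(G)$ and $\theta \subsetneq \Delta - \{\alpha_n\}$. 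For any such $w_\theta$ and any $j$ with $\alpha_j \in (\Delta - \{\alpha_n\}) - \theta$ we have $w_\theta \ge \tilde w_j$ (Lemma~\ref{lem2.6}), hence $\Omega_{w_\theta} \subset \Omega_{\tilde w_j}$. A partition-of-unity argument then writes $f'_{\tilde w_n} = \sum_{j \neq n} h_j$ with $h_j \in C_c^\infty(\Omega_{\tilde w_j}, \omega)$, which can be folded into $f_{\tilde w_j}$.

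The hard part is that some admissible $\theta$ contain $\{\alpha_{n+1},\ldots,\alpha_r\}$, in which case the corresponding redistribution is forced into $\Omega_{\tilde w_j}$ with $j < n$ and cannot reach the desired range $i \ge n+1$ by Bruhat-order bookkeeping alone. Overcoming this will likely require either strengthening the inductive statement to track simultaneously the vanishing $B_m(\bt_j(\cdot)\tilde w_j, f-f_0) = 0$ on $A_{\tilde w_j}$ for all $j \le n$ and then iteratively invoking Theorem~\ref{thm2.8}(1) to clear each low-$j$ contribution, or re-running the Jacquet--Shalika argument separately with a $\GL_j$-twist for each $j < n$ (available because $\CC(n) \supset \CC(j)$) in order to kill the stray low-$j$ cells. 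This bookkeeping, ensuring that every redistributed contribution ultimately lands in $\Omega_{\tilde w_i}$ with $i \ge n+1$, is where I expect the technical heart of the proof to lie.
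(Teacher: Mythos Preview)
Your inductive setup and the role of the zeta integral / Jacquet--Shalika argument are correct, and you have correctly located the difficulty. But you miss the observation that dissolves it; your proposed workarounds (strengthening the induction, recursing with lower $\GL_j$ twists) are neither needed nor what the paper does.

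The key is that the zeta integral computation (the paper's Proposition~\ref{prop4.4}) yields $B_m(\bt_n(a)\tilde w_n, f_{\tilde w_n}) = 0$ for \emph{all} $a \in \GL_n(F)$, not merely for $a$ in the torus $A_{\tilde w_n}$. Set $w_{\max} = w_\ell^{L_n}\tilde w_n$, the top of the Bruhat interval for $Q_n\tilde w_nQ_n$ (Lemma~\ref{lem2.7}); one has $w_{\max}\in B(G)$ with $\theta_{w_{\max}} = \{\alpha_{n+1},\dots,\alpha_r\}$. Every $w\in B(G)$ with $\tilde w_n \le w \le w_{\max}$ factors as $w = w'\tilde w_n$ with $w'$ in the Weyl group of $L_n$, hence represented by some $\bt_n(\sigma)$ with $\sigma\in\GL_n$; since $A_w \subset A_{w_{\max}} = Z\cdot\{\bt_n(t): t \text{ diagonal in }\GL_n\}$, each $aw$ with $a\in A_w$ is of the form $z\,\bt_n(a')\tilde w_n$, and the $\GL_n$ vanishing applies. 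Iterating Theorem~\ref{thm2.8} through this whole Bessel interval pushes $f_{\tilde w_n}$ onto $\bigcup_{w>w_{\max},\, d_B(w,w_{\max})=1}\Omega_w$, and now the redistribution is automatic: any such $w$ has $\theta_w = \theta_{w_{\max}}\setminus\{\alpha_i\}$ with $n+1\le i\le r$, so $\Omega_w\subset\Omega_{\tilde w_i}$. Your ``hard part'' disappears once the full $\GL_n$ vanishing is used rather than only its torus slice. (Two side remarks on your zeta-integral sketch: the roles of $\xi_s^{i,v}$ and $M(s,\tau,\psi^{-1})\xi_s^{i,v}$ are swapped --- the former lands in $M_r$, where Lemma~\ref{lem4.3}(1) gives the match between $f$ and $f_0$, and it is the latter that produces the $\bt_n(a)\tilde w_n$ integral; and collapsing the unipotent integration on that side requires the stability Lemma~\ref{lem4.3}(3), which is where the compact support of $f_{\tilde w_n}$ on the closed set $Q_n\tilde w_nQ_n\cap\Omega_{\tilde w_n}$ enters essentially.)
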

The base case when $n=0$ is just Corollary \ref{cor2.9}. We now consider the induction step. Let $n$ be an integer such that $1\le n<r$.  We assume that we know $\CC(n-1)$ implies that there exist $f_{\tilde w_i}\in C_c^\infty(\Omega_{\tilde w_i},\omega),$ such that 
\begin{equation}\label{eq4.1}B_m(g,f)-B_m(g,f_0)=\sum_{i=n}^rB_m(g,f_{\tilde w_i}),\end{equation}
for all $g\in G$ and large enough $m$.

Recall that $Q_n=L_nV_n$ denotes the standard parabolic subgroup of $\Sp_{2r}(F)$ with Levi subgroup $L_n=\wpair{\bm_r(a,a_{n+1},\dots,a_n),a\in \GL_n(F), a_{i}\in F^\times, n+1\le i\le n}$.

Denote $$E^-_n=\wpair{\begin{pmatrix}I_n & x&y \\ &I_{2r-2n}&x' \\ &&I_n \end{pmatrix}\in \Sp_{2r}},$$
and $$E^+_n=\wpair{\begin{pmatrix}u_1 &&\\ &u_2& \\ &&u_1' \end{pmatrix}\in \Sp_{2r}, u_1\in U_{\GL_n}, u_2\in U_{\Sp_{2r-2n}}}. $$
Then $U=E^+_n\cdot E^-_n, E_n^-\subset V_n$. Moreover, by \cite[p.12]{Ca}, the product map 
$$Q_n \times \wpair{\tilde w_n}\times E_n^- \ra Q_n \tilde w_n Q_n$$
induces an isomorphism. 

Recall that $U_m$ denotes $U\cap H_m$.
\begin{lem}
For $u_0\in E_n^--(E_n^-\cap U_m), u^+\in E_n^+\cap U_m$, then $u_0':=(u^+)^{-1}u_0(u^+)\in E_n^--(E_n^-\cap U_m) .$
\end{lem}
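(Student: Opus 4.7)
The plan is to decompose the claim into two assertions: first that the conjugate $u_0'$ still lies in $E_n^-$, and second that it remains outside $U_m$.

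For the first assertion, I would observe that by its matrix description, $E_n^-$ coincides with the unipotent radical $V_n$ of the parabolic $Q_n = L_n V_n$, while $E_n^+$ sits inside the Levi $L_n$ (its factors $u_1 \in U_{\GL_n}$ and $u_2 \in U_{\Sp_{2r-2n}}$ are block-diagonal inside $L_n$). Since $V_n$ is normal in $Q_n$, conjugation by any element of $L_n$ preserves $V_n$; in particular $(u^+)^{-1} u_0 u^+ \in E_n^-$.

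For the second assertion, the key point is that $H_m = e_m K_m^{\Sp_{2r}} e_m^{-1}$ is a group, so $U_m = U \cap H_m$ is also a group. Since $u^+ \in U_m \subset H_m$, both $u^+$ and $(u^+)^{-1}$ lie in $H_m$. Therefore
\[
u_0' = (u^+)^{-1} u_0 u^+ \in H_m \iff u_0 \in H_m.
\]
Combined with the fact that $u_0, u_0' \in U$, this gives $u_0' \in U_m$ if and only if $u_0 \in U_m$. Since by hypothesis $u_0 \notin U_m$, we conclude $u_0' \notin U_m$, and together with $u_0' \in E_n^-$ this yields $u_0' \in E_n^- - (E_n^- \cap U_m)$.

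There is no real obstacle here; the statement is essentially a formal consequence of $E_n^- = V_n$ being normalized by the Levi (which contains $E_n^+$) and of $H_m$ being a subgroup. The only thing to be slightly careful about is confirming the inclusion $E_n^+ \subset L_n$ from the explicit matrix form, which is immediate.
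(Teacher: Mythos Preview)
Your argument is essentially correct, and the second half is in fact cleaner than what the paper does. One correction is needed in the first half, however: your identification $E_n^- = V_n$ and $E_n^+ \subset L_n$ is wrong for the parabolic $Q_n$ as defined in the paper. The Levi $L_n$ of $Q_n$ is $\GL_n \times \GL_1^{r-n}$, so its unipotent radical $V_n$ is strictly larger than $E_n^-$ (it contains, for instance, the root subgroups for $\alpha_{n+1},\dots,\alpha_r$), and $E_n^+$ is \emph{not} contained in $L_n$ because the middle block $u_2 \in U_{\Sp_{2r-2n}}$ need not be diagonal. What you want instead is the maximal parabolic $P_{\tilde w_n}$ with Levi $M_{\tilde w_n} \cong \GL_n \times \Sp_{2r-2n}$: then $E_n^-$ is exactly its unipotent radical and $E_n^+$ sits inside its Levi, so the normalization argument goes through. (Alternatively, the block matrix computation $(u^+)^{-1}u_0 u^+ = \begin{pmatrix} I_n & u_1^{-1} x u_2 & u_1^{-1} y u_1' \\ & I & u_2^{-1} x' u_1' \\ & & I_n \end{pmatrix}$ shows directly that $u_0' \in E_n^-$; this is what the paper writes down.)

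For the second assertion, the paper simply says ``from a detailed analysis of the exponents in each entry of the matrix, one can show that $u_0' \notin E_n^- \cap U_m$. We omit the details.'' Your observation that $H_m = e_m K_m e_m^{-1}$ is a \emph{subgroup} (being a conjugate of the congruence subgroup $K_m$), and hence $u_0' \in H_m \iff u_0 \in H_m$, bypasses that entry-by-entry computation entirely. This is a genuine simplification: the paper's approach would require tracking how the $\fp$-adic valuations of the entries of $x,y$ behave under multiplication by $u_1^{-1}, u_2, u_1'$ with entries in controlled powers of $\fp$, whereas your argument needs only the group property of $H_m$.
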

\begin{proof}
 We suppose that 
$$u_0=\begin{pmatrix}I_n &x &y \\ &I_{2r-2n}&x' \\ &&I_n \end{pmatrix}, u^+=\begin{pmatrix}u_1&&\\ &u_2&\\ &&u_1'  \end{pmatrix},$$
with $x\in \Mat_{n\times (2r-2n)}, y\in \Mat_{n\times n}, u_1\in U_{\GL_n}, u_2\in U_{\Sp_{2r-2n}}$. 
Then 
$$ u_0'=\begin{pmatrix}I_n & u_1^{-1}xu_2 & u_1^{-1}y u_1' \\ &I_{2r-2n} & u_2^{-1}x'u_1' \\ &&I_n \end{pmatrix}.$$
From a detailed analysis of the exponents in each entry of the matrix, one can show that $u_0'\notin E_n^-\cap U_m$. We omit the details. 
\end{proof}

\begin{lem}\label{lem4.3}
\begin{enumerate}
\item We have 
$$B_m(\bm_r(a),f)-B_m(\bm_r(a),f_0)=0, a\in \GL_r(F) ,$$
for large enough $m$ depending only  on $f,f_0$.
\item We have $Q_n \tilde w_n Q_n\cap \Omega_{\tilde w_i}=\emptyset$ for $i\ge n+1$. In particular, we have $$B_m(g,f_{\tilde w_i})=0,$$
for all $g\in Q_n\tilde w_n Q_n$ and $i\ge n+1$, and thus 
$$B_m(g,f)-B_m(g,f_0)=B_m(g,f_{\tilde w_n}), g\in Q_n\tilde w_n Q_n,$$
for $m$ large.
\item Let $f_{\tilde w_n}\in C_c^\infty(\Omega_{\tilde w_n},\omega)$ be as in Eq.$(\ref{eq4.1})$. For sufficiently large $m$ depending only on $f_{\tilde w_n}$ (and hence only on $f_,f_0$), we have 
$$B_m(\bt_n(a)\tilde w_n u_0, f_{\tilde w_n})=0,$$
for all $a\in \GL_n(F)$ and $u_0\in E_n^--(U_m\cap E^-_n) $, 
\item For a fixed $m$, and each integer $k$, the set $\wpair{a\in \GL_n(F)| B_m(\bt_n(a)\tilde w_n, f_{\tilde w_n} )\ne 0, |\det(a)|=q^k}$ is compact modulo $U_{\GL_n}$.
\end{enumerate}
\end{lem}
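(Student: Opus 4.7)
The plan is to prove each of the four parts using the inductive identity (\ref{eq4.1}) together with Bruhat-order arguments, matrix-block analysis, and commutator manipulations of partial Bessel functions.

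For part (1), apply (\ref{eq4.1}) and show each summand $B_m(\bm_r(a), f_{\tilde w_i})$ with $n \le i \le r$ vanishes. Since $\supp(f_{\tilde w_i})\subseteq\Omega_{\tilde w_i}$ and $\Omega_{\tilde w_i}$ is $B$-bi-invariant, it suffices to check $\bm_r(a)\notin\Omega_{\tilde w_i}$. The element $\bm_r(a)$ lies in the Bruhat cell $C(w_a)$ for some $w_a\in W_{\GL_r}\subset W_{\Sp_{2r}}$, whose support in $\Delta$ omits the simple reflection $s_{\alpha_r}$. A direct signed-permutation computation shows the support of each $\tilde w_i$ equals all of $\Delta$; in particular $s_{\alpha_r}\le\tilde w_i$, so the Bruhat subword property forces $w_a\not\ge\tilde w_i$.

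For part (2), Lemma \ref{lem2.7} identifies the Bruhat cells in $Q_n\tilde w_nQ_n$ as $C(\tilde w_nw)$ with $w\in W_{L_n}$. The disjointness $Q_n\tilde w_nQ_n\cap\Omega_{\tilde w_i}=\emptyset$ for $i\ge n+1$ follows from a rank analysis of the lower-left $i\times i$ block: elements of $Q_n\tilde w_nQ_n$ have this block of rank exactly $n$ (the rank contributed by $\tilde w_n$, preserved under the block-upper-triangular action of $Q_n$ on both sides), whereas membership in $\Omega_{\tilde w_i}$ requires the block to have rank $\ge i>n$. Consequently $B_m(g, f_{\tilde w_i})=0$ for $g\in Q_n\tilde w_nQ_n$ and $i\ge n+1$, and (\ref{eq4.1}) gives the stated identity on this double coset.

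For part (3), the strategy is a commutator trick exploiting the non-triviality of $u_0$ modulo $U_m\cap E_n^-$. For $v\in E_n^+\cap U_m\subset U\cap H_m$ the characters $\psi_U$ and $\psi_m$ agree by Lemma \ref{lem2.1}(1), and $B_m(gh)=\psi_m(h)B_m(g)$ gives
\[
B_m(\bt_n(a)\tilde w_nu_0,f_{\tilde w_n})=\psi_m(v)^{-1}B_m(\bt_n(a)\tilde w_nu_0 v,f_{\tilde w_n}).
\]
Successively push $v$ past $u_0$ via $u_0 v=v\cdot v^{-1}u_0 v$ (with $v^{-1}u_0 v\in E_n^--(E_n^-\cap U_m)$ by the unnamed lemma), past $\tilde w_n$ via $\tilde w_nv\tilde w_n^{-1}\in E_n^+$, and past $\bt_n(a)$ via its Levi conjugation, terminating in $U\cdot\bt_n(a)\tilde w_n(v^{-1}u_0 v)$. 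Applying $\psi_U$-equivariance on the $U$-prefix produces a transformation relating the Bessel value at $u_0$ to the Bessel value at $v^{-1}u_0 v$ times an explicit factor built from $[u_0,v]$. Iterating this (or combining it with a suitable family of $v$'s) produces a self-referential identity $B_m(\bt_n(a)\tilde w_nu_0,f_{\tilde w_n}) = \chi\cdot B_m(\bt_n(a)\tilde w_nu_0,f_{\tilde w_n})$, and the hypothesis $u_0\notin E_n^-\cap U_m$ lets us choose $v$ so that $\chi\neq 1$, forcing vanishing.

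For part (4), $f_{\tilde w_n}\in C_c^\infty(\Omega_{\tilde w_n},\omega)$ is compactly supported modulo $Z$, so the support of $g\mapsto B_m(g,f_{\tilde w_n})$ is contained in the compact-modulo-$Z$ set $U\cdot\supp(f_{\tilde w_n})\cdot U_m$. Restricting to $g=\bt_n(a)\tilde w_n$ with $|\det(a)|=q^k$ and using the $U_{\GL_n}$-equivariance $B_m(\bt_n(ua)\tilde w_n,f_{\tilde w_n})=\psi_U(\bt_n(u))\, B_m(\bt_n(a)\tilde w_n,f_{\tilde w_n})$ for $u\in U_{\GL_n}$ cuts the nonvanishing locus of $a$ down to a compact set modulo $U_{\GL_n}$. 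The main obstacle is the commutator computation in part (3): carefully tracking the conjugates of $v$ through $\tilde w_n$ and $\bt_n(a)$ and verifying that the resulting $\chi$ can be made non-trivial precisely at the components where $u_0$ fails to lie in $U_m$.
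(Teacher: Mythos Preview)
Your arguments for parts (1) and (2) are correct. For (1) you use the full-support property of $\tilde w_i$, which is equivalent to the paper's observation that $\tilde w_i\notin\bW_{\GL_r}$. For (2) your rank-of-corner-block argument is a valid alternative to the paper's route, which instead shows $w_{\max}=w_\ell^{L_n}\tilde w_n\in B(G)$ with $\theta_{w_{\max}}=\{\alpha_{n+1},\dots,\alpha_r\}$ and then invokes Lemma~\ref{lem2.6} to rule out $w_{\max}\ge\tilde w_i$. Your approach for (4) via compact support also works once stated carefully: $B_m(\bt_n(a)\tilde w_n,f_{\tilde w_n})\ne 0$ forces $u^{-1}\bt_n(a)\tilde w_n (u')^{-1}\in\supp(f_{\tilde w_n})$ for some $u\in U$, $u'\in U_m$; using the product isomorphism $Q_n\times\{\tilde w_n\}\times E_n^-\cong Q_n\tilde w_nQ_n$ and projecting the $Q_n$-factor to $L_n$ traps $a$ modulo $U_{\GL_n}$ in a compact set (your phrase ``compact-modulo-$Z$ set $U\cdot\supp(f_{\tilde w_n})\cdot U_m$'' is wrong as written, since $U$ is noncompact, but the underlying idea is sound). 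The paper instead proves (4) by a root/character trick analogous to what you attempt for (3).

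The genuine gap is part (3). Your commutator manipulation, for $v\in E_n^+\cap U_m$, yields
\[
B_m(\bt_n(a)\tilde w_nu_0,f_{\tilde w_n})=\psi_m(v)^{-1}\psi_U(v'')\,B_m(\bt_n(a')\tilde w_nu_0',f_{\tilde w_n}),
\]
where $u_0'=v^{-1}u_0v$ and $v''=\tilde w_n v\tilde w_n^{-1}$ must be pushed past $\bt_n(a)$. Two things go wrong. First, for the $U_{\GL_n}$-component of $v$, the conjugate $\bt_n(a)v''\bt_n(a)^{-1}$ lies in $U$ only when $a$ is upper triangular, so for general $a\in\GL_n$ you cannot apply left $\psi_U$-equivariance. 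Second, for the $U_{\Sp_{2(r-n)}}$-component (which does commute with both $\tilde w_n$ and $\bt_n(a)$), the characters cancel exactly, $\psi_m(v)=\psi_U(v)$, and you obtain $B_m(\dots u_0)=B_m(\dots u_0')$ with trivial factor---no vanishing. To get a self-referential identity you would need $u_0'=u_0$, i.e.\ $v$ centralizing $u_0$, which again forces $\chi=1$. The mechanism simply does not detect the condition $u_0\notin E_n^-\cap U_m$. The paper's proof of (3) is entirely different: it uses that $Q_n\tilde w_nQ_n$ is \emph{closed} in $\Omega_{\tilde w_n}$ (Proposition~\ref{prop2.5}(2)), so $f_{\tilde w_n}|_{Q_n\tilde w_nQ_n}$ has compact support, hence its $E_n^-$-component is trapped in a fixed compact $E'$; choosing $m$ with $E'\subset E_n^-\cap U_m$ and using the unnamed lemma shows the integrand of $B_m$ vanishes identically when $u_0\notin E_n^-\cap U_m$. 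Ironically this is the same compact-support idea you propose for (4); you should swap your strategies for (3) and (4).
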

\noindent \textbf{Remark:} From (2) and (3) of the above lemma, one sees that 
$$B_m(\bt_n(a)\tilde w_n u_0,f)-B_m(\bt_n(a)\tilde w_n u_0,f_0)=0, $$
for $a\in \GL_n(F), u_0\in E_n^--(U_m\cap E_n^-)$ and $m$ large. This is certain ``stability" property of partial Bessel functions, which is the key in our proof of the local converse theorem. For similar ``stability" properties of partial Bessel functions for other groups and some special cases for $\Sp_{2r}$, see \cite[Lemma 6.2.2. and Lemma 6.2.6]{Ba95}, \cite[Proposition 5.7(c)]{Ba97}, \cite[Proposition 2.5 and Proposition 4.2]{Zh2} and \cite[Theorem 3.11]{Zh4}.\\

One can see in the following proof, we don't use the assumption $n<r$ and thus the assertions of the lemma work for $n=r$. 
\begin{proof}
(1) We have $\GL_r(F)=\cup_{w\in \bW_{\GL_r}} B_{\GL_r}wB_{\GL_n}\subset \cup_{w\in \bW_{\GL_r}}BwB$, where $B_{\GL_r}$ is the upper triangular Borel subgroup of $\GL_r(F)$, and $\bW_{\GL_r}$ is the Weyl group of $\GL_r$ which is viewed as a subgroup of $\bW$ by the embedding $\GL_r\cong M_r\subset G$. By Eq.(\ref{eq4.1}), it suffices to show that for $w\in \bW_{\GL_r} $ we cannot have $w\ge \tilde w_i$ for any $i\ge n$. In fact, if $w\ge \tilde w_i$, then $\tilde w_i\in \bW_{\GL_r}$. Contradiction. 

(2) Suppose that there exists $w\in \bW$ such that $C(w)\subset Q_n\tilde w_n Q_n \cap \Omega_{\tilde w_i} $ for some $i\ge n+1$. By Lemma \ref{lem2.7}, we have $$w_{\max}:= w_{\ell}^{L_n}\tilde w_n\ge w\ge \tilde w_i.$$
A matrix calculation shows that $w_{\ell} w_{\max}$ has the matrix form 
$$\begin{pmatrix} -I_n &&&\\ &&J_{r-n}&\\ &-J_{r-n}&& \\ &&&-I_n \end{pmatrix},$$
which is the long Weyl element of the Levi subgroup $M_{w_{\max}}\cong \GL_1^n \times \Sp_{2(r-n)}$. Thus $w_{\max}\in B(G)$. Moreover, the set of simple roots in $M_{w_{\max}}$ is 
$$\theta_{w_{\max}}=\wpair{\alpha_{i}, n+1\le i\le r }.$$
Since $\theta_{\tilde w_i}=\Delta-\wpair{\alpha_i}$, it follows that $\theta_{w_{\max}}\not\subset \theta_{\tilde w_i}$ for $i\ge n+1$. By Lemma \ref{lem2.6}, we cannot have $w_{\max}\ge \tilde w_i$. Contradiction. 

(3) Since $Q_n\tilde w_n Q_n$ is closed in $\Omega_{\tilde w_n}$ by Proposition \ref{prop2.5}(2), and $f_{\tilde w_n}\in C_c^\infty(\Omega_{\tilde w_n},\omega)$, the function $f_{\tilde w_n}$ has compact support on $Q_n \tilde w_n Q_n$. Thus there exists open compact subsets $Q'\subset Q_n, E'\subset E_n^-$ such that if $f_{\tilde w_n}( x\tilde w_n u)\ne 0$ for $x\in Q_n, u\in E_n^-$ implies that $x\in Q', u\in E'$. We take $m$ large enough such that $E'\subset U_m\cap E_n^-$. Note the choice of $m$ depends only on $E'$, which depends only on $f_{\tilde w_n}$. Recall that,  
$$B_m(\bt_n(a)\tilde w_n u_0,f_{\tilde w_n})=\frac{1}{\vol(U_m)}\int_{U\times U_m}\psi_U^{-1}(u')\psi_m^{-1}(u)f_{\tilde w_n}(u'\bt_n(a)\tilde w_n u_0 u) dudu',$$
see Eq.(\ref{eq2.2}). We write $u=u^+u^-$ with $u^+\in E^+_n\cap U_m, u^-\in E^-\cap U_m.$ By the previous lemma, $u_0'=(u^+)^{-1}u_0 u^+\in E^-_n-(E^-_n\cap U_m)$. We have 
\begin{align*}f_{\tilde w_n}(u'\bt_n(a)\tilde w_n u_0 u)&=f_{\tilde w_n}(u' \bt_n(a)\tilde w_n  u_0 u^+ u^-)\\
&=f_{\tilde w_n}(u' \bt_n(a)\tilde w_n  u^+ u_0'  u^-)\\
&=f_{\tilde w_n}(u' \bt_n(a) \tilde w_n u^+ \tilde w_n^{-1} \tilde w_n u_0'u^-).
\end{align*}
Note that $ u' \bt_n(a) \tilde w_n u^+ \tilde w_n^{-1}\in Q_n$. By the above analysis, if $f_{\tilde w_n}(u'\bt_n(a)\tilde w_n u_0 u)\ne 0$, we have $u' \bt_n(a) \tilde w_n u^+ \tilde w_n^{-1}\in Q' $ and $u_0'u^-\in E'\subset U_m\cap E_n^-$. Since $u^-\in U_m\cap E_n^-$, we get $u_0'\in U_m\cap E_n^-$. Contradiction. This implies that for any $u'\in U, u\in U_m$, 
$$ f_{\tilde w_n}(u'\bt_n(a)\tilde w_n u_0 u)=0.$$
Thus $$ B_m(\bt_n(a)\tilde w_n u_0,f_{\tilde w_n})=0.$$
(4) From the Iwasawa decomposition of $\GL_n$, it suffices to show that the set 
$$A(k)=\wpair{t=\diag(a_1,\dots,a_n)|a_i\in F^\times, \prod_{i}|a_i|=q^k, B_m(\bt_n(t)\tilde w_n, f_{\tilde w_n})\ne 0  }$$
is compact. Recall that, for a root $\beta$, we have fixed an isomorphism $\bx_\beta: F\ra U_\beta$, where $U_\beta$ is the root space of $\beta$.  For $i$ with $1\le i\le r$, we denote $\bx_{\alpha_i}$ by $\bx_i$ for simplicity. For example, if $r>2$, then $$\bx_2(x)=\bt_3\begin{pmatrix} 1&&\\ &1&x\\ &&1\end{pmatrix}.$$ 
For $i<n$, we have 
$$\tilde w_n \bx_{n-i}(x)=\bx_{i}(-x) \tilde w_n.$$
Take $t\in A(k)$, we have $$\bt_n(t)\tilde w_n\bx_{n-i}(x)=\bx_{i}(-\alpha_i(t) x) \bt_n(t)\tilde w_n. $$
For $x\in \fo$, we get $\bx_{n-i}(x)\in H_m$. By Eq.(\ref{eq2.1}), we get 
$$\psi(x)B_m(\bt_n(t)\tilde w_n,f_{\tilde w_n} )=\psi(-\alpha_i(t)x)B_m(\bt_n(t) \tilde w_n, f_{\tilde w_n}). $$
Since $ B_m(\bt_n(t)\tilde w_n,f_{\tilde w_n} )\ne 0$, we get 
$$ \psi(-\alpha_i(t)x)=\psi(x)=1, \forall x\in \fo.$$
Since $\psi$ has conductor $\fo$ by assumption, we get $\alpha_i(t)\in \fo$ for all $i$ with $1\le i<n$. This implies that $|a_1|\le |a_2|\le \dots \le |a_n|$. Similarly, we consider the root $\beta=\tilde w_n(\alpha_n)$, which is a negative root. We have $\tilde w_n \bx_\beta(x)=\bx_n(cx)\tilde w_n $, where $c\in \wpair{\pm 1}$. Thus we get 
$$\bt_n(t)\tilde w_n \bx_\beta(x)=\bx_n(\alpha_n(t)cx) t\tilde w_n. $$
Now take $x\in \fp^{(-2\rht(\beta)+1)m}$, so that $\bx_\beta(x)\in H_m $, see Lemma \ref{lem2.1}. By Eq.(\ref{eq2.1}), we have 
$$B_m(\bt_n(t) \tilde w_n, f_{\tilde w_n})=\psi(\alpha_n(t)cx)B_m(\bt_n(t)\tilde w_n,f_{\tilde w_n}). $$
Thus $\psi(\alpha_n(t)cx)=1 $ for all $x\in \fp^{(-2\rht(\beta)+1)m}$. Since $\psi$ has conductor $\fo$, we get $\alpha_n (t) \in \fp^{(2\rht(\beta)-1)m}$. From this condition, we get an upper bound of $|a_n|$. Now it is easy to see that $A(k)$ is compact.
\end{proof}

\begin{prop}\label{prop4.4}
Let $n$ be an integer with $1\le n<r$. The condition $\CC(n)$ implies that 
$$B_m(\bt_n(a)\tilde w_n, f_{\tilde w_n})=0,$$
for all $a\in \GL_n(F)$ and sufficiently large $m$ depending only on $f,f_0$.
\end{prop}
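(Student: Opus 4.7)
The plan is to apply the local functional equation to both $\pi$ and $\pi_0$, use $\CC(n)$ to cancel the common gamma factor, and evaluate both sides on carefully chosen Howe-vector test data so that one side vanishes by Bruhat-support analysis while the other reduces to a classical Whittaker integral over $\GL_n(F)$. A final appeal to the Jacquet--Shalika uniqueness result (Proposition \ref{prop3.6}) then forces the desired vanishing of $B_m(\bt_n(a)\tilde w_n, f_{\tilde w_n})$.

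Concretely, set $W_m = B_m(\cdot, f) - B_m(\cdot, f_0)$. For any generic $\tau$ of $\GL_n(F)$, any section $\xi_s \in \tilde V(s,\tau,\psi^{-1})$, and any $\phi \in \CS(F^n)$, subtracting the functional equations for $\pi$ and $\pi_0$ yields
$$\Psi(W_m, \phi, M(s,\tau,\psi^{-1})\xi_s) = \gamma(s,\pi\times\tau,\psi)\,\Psi(W_m, \phi, \xi_s).$$
By the inductive hypothesis Eq.(\ref{eq4.1}) and Lemma \ref{lem4.3}(2), on the cell $Q_n\tilde w_n Q_n$ one has $W_m = B_m(\cdot, f_{\tilde w_n})$, whereas on $Q_n$ itself $W_m \equiv 0$. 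The latter uses the fact that $\bW_{L_n}$ contains no Weyl element Bruhat-greater than $\tilde w_n$ (since $\ell(w_\ell^{L_n}) < \ell(\tilde w_n)$), so $Q_n \cap \Omega_{\tilde w_n} = \emptyset$, together with Lemma \ref{lem4.3}(1) on $\bm_r(\GL_r)$. I now specialize to $\phi = \phi_m^n$ and $\xi_s = \xi_s^{i,v}$ with $i, m$ chosen sufficiently large. The support condition $\supp(\xi_s^{i,v}) \subseteq P_n \ov N_{n,i}$ localizes the $\Sp_{2n}$-integration to a shrinking neighborhood of $M_n$; after conjugating by $w_{r-n,n}^{-1}$ the arguments of $W_m$ land in $Q_n$ up to a perturbation absorbed by the Howe-vector quasi-invariance under $H_m^r$ (with Lemma \ref{lem3.4} trivializing the Weil-representation twists by the $R^{r,n}$- and $X_n$-unipotents). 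This yields $\Psi(W_m, \phi_m^n, \xi_s^{i,v}) = 0$.

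For the intertwined side, Lemma \ref{lem3.3} evaluates $M\xi_s^{i,v}(w_n x, I_n) = \vol(\ov N_{n,i})\,v$ on a fixed compact. Decomposing $\Sp_{2n} = P_n \sqcup P_n w_n N_n$ and using the $\wt P_n$-covariance of $M\xi_s^{i,v}$, the relations $w_{r-n,n}^{-1}\bm_n(a) w_{r-n,n} = \bt_n(a)$ and $w_{r-n,n}^{-1} w_n w_{r-n,n} = \tilde w_n$ present the argument of $W_m$ in the form $\bt_n(a)\tilde w_n u_0$ with $a \in \GL_n(F)$ and $u_0 \in E_n^-$. Lemma \ref{lem4.3}(3) discards the contribution with $u_0 \notin U_m \cap E_n^-$; collapsing the remaining unipotent integrations via the Howe-vector Bessel identity yields
$$\Psi(W_m, \phi_m^n, M\xi_s^{i,v}) = c(m,i)\int_{U_{\GL_n}\backslash\GL_n} B_m(\bt_n(a)\tilde w_n, f_{\tilde w_n})\,W_v^*(a)\,|\det(a)|^{-s'}\gamma_{\psi^{-1}}^{-1}(\det(a))\,da$$
for a nonzero constant $c(m,i)$ and an explicit shift $s'$. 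Combined with the vanishing from the previous paragraph and the functional equation, the $\GL_n$-integral above vanishes for every $v \in V_\tau$ and every generic $\tau$ of $\GL_n(F)$. Lemma \ref{lem4.3}(4) verifies the support-modulo-$U_{\GL_n}$ hypothesis of Proposition \ref{prop3.6}, which therefore forces $B_m(\bt_n(a)\tilde w_n, f_{\tilde w_n}) = 0$ identically on $\GL_n(F)$. The main obstacle will be the computation of $\Psi(W_m, \phi_m^n, M\xi_s^{i,v})$ in this paragraph: tracking the Weil-representation factors from $\phi_m^n$ against the Rao-cocycle factors of the intertwined section, and herding the various unipotent nuisance variables $r \in R^{r,n}$, $x \in X_n$, and $u_0 \in E_n^-$ into a clean $\GL_n$-Whittaker pairing via Lemma \ref{lem4.3}(3), which is precisely the partial-Bessel stability statement that replaces the delicate analytic manipulations with an algebraic vanishing.
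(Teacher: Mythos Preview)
Your proposal is correct and follows essentially the same approach as the paper: specialize to the sections $\xi_s^{i,v}$ and the Schwartz function $\phi_m^n$, use Lemma~\ref{lem4.3}(1) (the arguments $j(r(y,x)\bm_n(a))$ lie in $M_r$, not merely in $Q_n$) to kill the non-intertwined side, compute the intertwined side on the big cell $P_n w_n N_n$ via Lemmas~\ref{lem3.3}, \ref{lem3.4} and \ref{lem4.3}(3) to collapse to a $\GL_n$ Whittaker pairing, and conclude with Proposition~\ref{prop3.6} together with Lemma~\ref{lem4.3}(4). One minor slip: the factor $\gamma_{\psi^{-1}}^{-1}(\det a)$ in your displayed integral cancels against the $\gamma_{\psi^{-1}}(\det a)$ coming from $\omega_{\psi^{-1}}(\bm_n(a))\phi_m^n$, so the integrand reduces to $B_m(\bt_n(a)\tilde w_n,f_{\tilde w_n})\,W_v^*(a)\,|\det a|^{-s'}$, exactly as needed for Proposition~\ref{prop3.6}.
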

When $n=1$, the calculations appeared in the following proof was carried out in \cite{Zh4}. In the following proof, the integer $r$ and $n$ are fixed. For simplicity, we will drop $r,n$ from various sub- and sup-script. We will write $j(g)=w_{r-n,n}^{-1}g w_{r-n,n}$, for $g\in G$. Here we recall that $\bt_n(a)=\bm_r(a,1,\dots,1)=j(\bm_r(1,\dots,1,a))=j(\bm_n(a))$, and $\tilde w_n=j(w_n)$.
\begin{proof}
Let $m$ be a positive integer which is large enough such that all of the assertions of Lemma \ref{lem4.3} hold. We have defined a function $\phi_m=\phi_m^n\in \CS(F^n)$ in $\S$3.2. In the following proof, for simplicity, we will write the Weil representation as $\omega(g)\phi_m$ rather than $\omega_{\psi^{-1}}(g)\phi_m$. Note that the Weil representation is smooth, and thus for $i$ large enough, we have $\omega(s(\bar u))\phi_m=\phi_m$ for all $\bar u\in \ov{N}_{n,i}$. Here $s: K_{m_1}\ra \wt{\Sp}_{2n}(F)$ is a splitting of the double cover map $\wt{\Sp}_{2n}(F)\ra \Sp_{2n}(F)$. We also require that $m\ge m_1$.

Let $(\tau,V_\tau)$ be an irreducible generic representation of $\GL_n(F)$. Let $v\in V_\tau$. We consider an integer $i$ such that $i\ge \max\wpair{m, i_0(v), I(N'_{n,m}, v)}$ and such that $ \omega(s(\bar u))\phi_m=\phi_m$ for all $\bar u\in \ov{N}_{n,i}$. See $\S3.2$ for the notation $N'_{n,m}$ and $\S$3.1 for the definition of the notations $i_0(v)$ and $I(N'_{n,m}, v) $. By Lemma \ref{lem3.2}, we have a section $\xi_s^{i,v}\in \tilde V(s,\tau,\psi^{-1})$. 

We will compute the integral $\Psi(W_m^f, \phi_m, \xi_s^{i,v})$. Since $N_nM_n\ov{N}_n\subset \Sp_{2n}$ is open and dense, we will replace $U^n\setminus \Sp_{2n}$ by $U^n\setminus N_nM_n\ov{N}_n\cong U_{\GL_n}\setminus \GL_n\times \ov{N}_n $. Note that for $g=\bn_n(b)\bm_n(a) \bar u\in N_nM_n\ov{N}_n $, the quotient Haar measure on $ U^n\setminus N_nM_n\ov{N}_n\cong U_{\GL_n}\setminus \GL_n\times \ov{N}_n $ can be taken as $dg=|\det(a)|^{-(n+1)}d\bar u da.$
\begin{align*}&\Psi(W_m^f,\phi_m,\xi_s^{i,v})\\
&=\int_{U^n\setminus \Sp_{2n}}\int_{R^{r,n}}\int_{X_n}W_{m}^f (j(rxg)) \omega(g)\phi_m(x)\xi_s^{i,v}(g,I_n)dxdrdg\\
&=\int_{U_{\GL_n}\setminus \GL_n}\int_{\ov{N}_n} \int_{R^{r,n}}\int_{X_n} W_m^f(j(rx \bm_n(a) \bar u))\\ 
& \quad \quad \cdot\omega(\bm_n(a)\bar u ) \phi_m(x) \xi_s^{i,v}(\bm_n(a) \bar u, I_n)|\det(a)|^{-(n+1)}dx dr d\bar u  da.
\end{align*}
For $\bar u\notin \ov{N}_{n,i} $, we have $ \xi_s^{i,v}(\bm_n(a)s( \bar u), I_n)=0$. By assumption on $i$, we have $\omega(s(\bar u) ) \phi_m=\phi_m $ for $\bar u\in \ov{N}_{n,i}$. On the other hand, if we assume $\bar u=\bar \bn_n(b)\in \ov{N}_{n,i}$, we have 
$$j(\bar u)=w_{r-n,n}^{-1}\bar u w_{r-n,n}=\bar \bn_r\begin{pmatrix}0&0\\ b&0 \end{pmatrix}\in \ov{N}_r \cap H^r_m, $$
by the assumption $i\ge m$ and the definition of $\ov{N}_{n,i}$.

Thus by Eq.(\ref{eq2.1}), we have 
$$W_m^f(j(rx \bm_n(a) \bar u)  )=W_m^f(j(rx \bm_n(a) )).$$
From the above discussion, we get 
\begin{align}
\label{eq4.2}&\Psi(W_m^f,\phi_m,\xi_s^{i,v})\\
&=\vol(\ov{N}_{n,i})\int_{U_{\GL_n}\setminus \GL_n}\int_{R^{r,n}}\int_{X_n} W_m^f(j(rx \bm_n(a) )) \nonumber \\ 
& \quad \quad \cdot\omega(\bm_n(a) ) \phi_m(x) \xi_s^{i,v}(\bm_n(a) , I_n)|\det(a)|^{-(n+1)}dx dr   da. \nonumber
\end{align}
We can write  $$ r=\bm_r\begin{pmatrix} I_{r-n-1} &&y\\ &1&0\\ &&I_n \end{pmatrix}, rx=\bm_r\begin{pmatrix} I_{r-n-1} &&y\\ &1&x\\ &&I_n \end{pmatrix},$$ with $y\in \Mat_{(r-n-1)\times n}(F), x\in \Mat_{1\times n}(F).$
By abuse of notation, we will write 
$$r(y,x)=\bm_r\begin{pmatrix} I_{r-n-1} &&y\\ &1&x\\ &&I_n \end{pmatrix}, y\in \Mat_{(r-n-1)\times n}(F), x\in \Mat_{1\times n}(F).$$

From the matrix form, we can check that $$ j\left(r(y,x) \bm_n(a)\right)\in M_r.$$
By Lemma \ref{lem4.3}(1), we get 
\begin{align*}W_m^f\left(j\left( r(y,x)\bm_n(a)\right)\right)
=W_m^{f_0}\left(j\left( r(y,x)\bm_n(a)\right)\right).
\end{align*}
Thus by Eq.(\ref{eq4.2}), we can get 
$$\Psi(W_m^f,\phi_m, \xi_s^{i,v})=\Psi(W_m^{f_0},\phi_m, \xi_s^{i,v}).$$
By the local functional equation and the assumption $\gamma(s,\pi\times \tau,\psi)=\gamma(s,\pi_0\times \tau,\psi)$, we then get 
\begin{equation} \label{eq4.3}\Psi(W_m^f,\phi_m, \tilde \xi_{1-s}^{i,v})=\Psi(W_m^{f_0},\phi_m, \tilde \xi_{1-s}^{i,v}). \end{equation}
We now consider the integral $\Psi(W_m^f,\phi_m, \tilde \xi_{1-s}^{i,v}) $. Since $N_nM_nw_nN_n\subset \Sp_{2n}$ is open and dense, we will replace $U^n\setminus \Sp_{2n}$ by $U^n\setminus N_n M_nw_n N_n=U_{\GL_n}\setminus \GL_n w_n N_n$ in the integral of $\Psi(W_m^f, \phi_m,\tilde \xi_{1-s}^{i,v})$. We then have 
\begin{align*}
&\Psi(W_m^f, \phi_m, \tilde \xi_{1-s}^{i,v})\\
=&\int_{U_{\GL_n}\setminus \GL_n}\int_{N_n} \int_{R^{r,n}}\int_{X_n}W_m^f(j(r(y,x) \bm_n(a)w_n u ))\\
 &\cdot \omega(\bm_n(a)w_n u)\phi_m(x)\tilde \xi_{1-s}^{i,v}(\bm_n(a)w_n u, I_n)|\det(a)|^{-(n+1)}dxdyduda.
\end{align*}
There is a similar expression for $\Psi(W_m^{f_0},\phi_m, \xi_s^{i,v})$

We have 
\begin{align*}
j(r(y,x)\bm_n(a)w_n u )&=j(\bm_n(a) r(ya,xa)w_n u )\\
&=j(\bm_n(a) w_n r'(ya,xa) w_n u)\\
&=\bt_n(a) \tilde w_n j(r'(ya,xa))j(u),
\end{align*}
where $r'(y,x)=w_n^{-1}r(y,x)w_n$.  In matrix form, we have
\begin{align*}
r'(y,x)=\bn_r\begin{pmatrix}y&&\\x&&\\ &J_n{}^t\! x& J_n{}^t\! yJ_{r-n-1}  \end{pmatrix},
\end{align*}
and $$j(r'(y,x))=\bn_r\begin{pmatrix}J_n{}^t\! x& J_n{}^t\! y J_{r-n-1} & \\ &&y\\ &&x \end{pmatrix}.$$
For $u\in N_n$, we write $u=\bn_n(b)$. Then 
$$j(u)=\bn_r\begin{pmatrix}0&b\\ 0&0 \end{pmatrix},$$
and 
$$j(r'(y,x))j(u)=\bn_r\begin{pmatrix}J_n{}^t\! x& J_n{}^t\! y J_{r-n-1} &b \\ &&y\\ &&x \end{pmatrix}. $$
Thus $j(r'(y,x))j(u) \in E_n^-$. In particular, we have $ (j(r(y,x) \bm_n(a)w_n u )\in Q_n\tilde w_n Q_n$. By Lemma \ref{lem4.3}(2) and the above analysis, we have 
\begin{align*}&\quad W_m^f(j(r(y,x) \bm_n(a)w_n u ))-W_m^{f_0}(j(r(y,x) \bm_n(a)w_n u ))\\
&=B_m(j(r(y,x) \bm_n(a)w_n u ),f_{\tilde w_n} )\\
&=B_m(\bt_n(a)\tilde w_n j(r'(ya,xa)) j(\bn_n(b)),f_{\tilde w_n} )
\end{align*}
where we assumed $u=\bn_n(b)$.

From the above analysis and Eq.(\ref{eq4.3}), we get 
\begin{align}
0 =&\int_{U_{\GL_n}\setminus \GL_n}\int_{N_n} \int_{R^{r,n}}\int_{X_n}B_m(\bt_n(a)\tilde w_n j(r'(ya,xa)) j(\bn_n(b)),f_{\tilde w_n})  \label{eq4.4}\\
 &\cdot \omega(\bm_n(a)w_n \bn_n(b))\phi_m(x)\tilde \xi_{1-s}^{i,v}(\bm_n(a)w_n \bn_n(b), I_n)|\det(a)|^{-(n+1)}dydxdbda \nonumber \\
= &\int_{U_{\GL_n}\setminus \GL_n}\int_{N_n} \int_{R^{r,n}}\int_{X_n}B_m(\bt_n(a)\tilde w_n j(r'(ya,xa)) j(\bn_n(b)),f_{\tilde w_n}) \nonumber \\
 &\cdot \gamma_{\psi^{-1}}(\det(a))\omega(w_n \bn_n(b))\phi_m(xa)\tilde \xi_{1-s}^{i,v}(\bm_n(a)w_n \bn_n(b), I_n) |\det(a)|^{-(n+1)+1/2}dydxdbda \nonumber\\
=&\int_{U_{\GL_n}\setminus \GL_n}\int_{N_n} \int_{R^{r,n}}\int_{X_n}B_m(\bt_n(a)\tilde w_n j(r'(y,x)) j(\bn_n(b)),f_{\tilde w_n}) \nonumber \\
 &\cdot \gamma_{\psi^{-1}}(\det(a)) \omega(w_n \bn_n(b))\phi_m(x)\tilde \xi_{1-s}^{i,v}(\bm_n(a)w_n \bn_n(b), I_n) |\det(a)|^{-r-1/2}dydxdbda. \nonumber
\end{align}

Denote 
\begin{align*}D_m=\wpair{(x,y,b)|  j(r'(y,x))j(\bn_n(b))\in E_n^-\cap H_m }.
\end{align*}
By Lemma \ref{lem4.3}(3), if $(x,y,b)\notin D_m$, we have 
$$B_m(\bt_n(a)\tilde w_n j(r'(y,x)) j(\bn_n(b)),f_{\tilde w_n}) =0. $$
If $(x,y,b)\in D_m$, by Eq.(\ref{eq2.1}), we have 
$$ B_m(\bt_n(a)\tilde w_n j(r'(y,x)) j(\bn_n(b)),f_{\tilde w_n})=B_m(\bt_n(a)\tilde w_n ,f_{\tilde w_n}).$$
On the other hand, for $(x,y,b)\in D_m$, by Lemma \ref{lem3.4}, we have 
$$\omega(w_n \bn_n(b))\phi_m(x)=\omega(w_n )\phi_m(x).$$
By Eq.(\ref{eq3.2}) and the assumption on $i$, we have
$$\tilde \xi_{1-s}^{i,v}(\bm_n(a)w_n \bn_n(b), I_n)=\vol(\ov{N}_{n,i}) \gamma_{\psi^{-1}}^{-1}(\det(a))|\det(a)|^{(n+2)/2-s}W^*_v(a) .$$
A simple calculation shows that $\int_{x\in D_m\cap X_n}\omega(w_m)\phi_m(x)dx\ne 0$.
Now Eq.(\ref{eq4.4}) reads
$$0=\int_{U_{\GL_n}\setminus \GL_n}B_m(\bt_n(a)\tilde w_n, f_{\tilde w_n})\tilde W_v^{*}(a)|\det(a)|^{-s+(n+1)/2-r}da.  $$
This is true for any $\tau$ and $v\in V_\tau$.
Then by Proposition \ref{prop3.6} and Lemma \ref{lem4.3}(4), we get $B_m(\bt_n(a)\tilde w_n, f_{\tilde w_n})=0$ for all $a\in \GL_n(F)$.
\end{proof}

\begin{proof}[Proof of Proposition $\ref{prop4.1}$]
Let $w_{\max}=w_{\ell}^{L_n}\tilde w_n $ be as in the proof of Lemma \ref{lem4.3}. Then $M_{w_{\max}}\cong \GL_1^n\times \Sp_{2r-2n}$. Thus the center of $M_{w_{\max}}$ is $$A_{w_{\max}}=\wpair{\bm_r(\diag(a_1,a_2,\dots,a_n,1,\dots,1)), a_i\in F^\times, 1\le i\le n}\times Z.$$
Recall that $Z=\wpair{\pm I_{2r}}$ is the center of $G=\Sp_{2r}(F)$. For any $a\in A_{w_{\max}}$, we can write $a=z\bt_n(a_0)$ with $a_0$ a diagonal element in $\GL_n$. Since $\bt_n(a_0) w_{\ell}^{L_n}$ has the form $\bt_n( b)$ for certain $b\in \GL_n(F)$, we get  
$$B_m(aw_{\max}, f_{\tilde w_n})= B_m(aw_{\ell}^{L_n}\tilde w_n, f_{\tilde w_n})=\omega(z) B_m(\bt_n(a_0)w_{\ell}^{L_n}\tilde w_n, f_{\tilde w_n})=0$$
by Proposition \ref{prop4.4}.

 For any $w\in B(G)$ with $\tilde w_n\le w\le w_{\max}$, we can write $w=w'\tilde w_n$ for a Weyl element $w'$ of the Levi $L_n$, which has a realization of the form $\bt_n(b')$ for certain $b'\in \GL_n(F)$. We have $A_w\subset A_{w_{\max}}$ by Lemma \ref{lem2.6}. A similar consideration as above shows that 
 $$B_m(a w,f_{\tilde w_n})=0, \forall a\in A_{w}.$$
 
Denote 
$$\Omega'_{\tilde w_n}=\bigcup_{w \in B(G), w>w_{\max}\atop d_B(w, w_{\max})=1}\Omega_w.$$

By Theorem \ref{thm2.8}, there exists a function $f'_{\tilde w_n}\in C_c^\infty(\Omega'_{\tilde w_n}, \omega)$ such that 
$$B_m(g,f_{\tilde w_n})=B_m(g, f'_{\tilde w_n}),$$
for $m$ large enough. 
 
By a partition of unity argument, for each $w\in B(G)$ with $w>w_{\max}$ and $d_B(w,w_{\max})=1$, there exists a function $f'_{\tilde w_n, w}$ such that 
$$f'_{\tilde w_n}=\sum_{w\in B(G), w>w_{\max} \atop d_B(w,w_{\max})=1} f'_{\tilde w_n, w}.$$
Thus 
$$B_m(g,f_{\tilde w_n})=\sum_{w\in B(G), w>w_{\max} \atop d_B(w,w_{\max})=1} B_m(g, f'_{\tilde w_n, w}),$$
for $m$ large enough. From the proof Lemma \ref{lem4.3}, we see that 
$$\theta_{w_{\max}}=\wpair{\alpha_i, n+1\le i\le r}.$$
For $w\in B(G), w>w_{\max}$ and $d_B(w,w_{\max})=1$, by Lemma \ref{lem2.6}, we get 
$$\theta_w=\theta_{w_{\max}}-\wpair{\alpha_i},$$
for some $i$ with $n+1\le i\le r$. Recall that 
$$\theta_{\tilde w_i}=\Delta-\wpair{\alpha_i}.$$
Thus we have $ \theta_w\subset \theta_{\tilde w_i}$, and hence $w\ge \tilde w_i$. Thus $\Omega_{w}\subset \Omega_{\tilde w_i}$. By Proposition \ref{prop2.5}, $\Omega_w$ is in fact open in $\Omega_{\tilde w_i}$. Thus by the basic exact sequence Eq.(\ref{eq2.3}), $f_{\tilde w_n,w}\in C_c^\infty(\Omega_w,\omega)\subset C_c^\infty(\Omega_{\tilde w_i}, \omega)$. Now define 
$$f'_{\tilde w_i}=f'_{\tilde w_n,w}+f_{\tilde w_i}\in C_c^\infty(\Omega_{\tilde w_i},\omega). $$
By the above argument and Eq.(\ref{eq4.2}), we get 
$$B_m(g,f)-B_m(g,f_0)=\sum_{i=n+1}^r B_m(g,f'_{\tilde w_i}).$$
This finishes the proof of Proposition \ref{prop4.1}.
\end{proof}

\section{Proof of the main theorem}\label{sec7}
In this section, we finish the proof of Theorem \ref{thm2.10}.

By Proposition \ref{prop4.1}, the condition $\CC(r-1)$ implies that there exists a function $f_{\tilde w_r}\in C_c^\infty(\Omega_{\tilde w_r},\omega)$
\begin{equation}\label{eq5.1}B_m(g,f)-B_m(g,f_0)=B_m(g,f_{\tilde w_r}),\end{equation}
for all $g\in G$ and large enough $m$ depending on $f,f_0$. Note that $\tilde w_r=w_r$.

Recall that, in $\S$3.3, we have constructed a Schwartz function $\phi_k=\phi_{k,r}^r\in \CS(F^r)$ for $k>0$.

\begin{prop}\label{prop5.2}
Let $m$ be a large enough integer and $k$ be an integer such that $k\ge m$. Assume the condition $\CC(r)$, we have 
$$B_m(\bm_r(a)\tilde w_r,f_{\tilde w_n}) (\omega(w_r)\phi_k)(e_ra)=0, $$
for all $a\in \GL_r(F)$.
\end{prop}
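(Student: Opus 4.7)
The plan is to mirror the strategy used in the proof of Proposition \ref{prop4.4}, but to use the $n=r$ form of the local zeta integral from Eq.(\ref{eq1.1}), namely $\Psi(W,\phi,\xi_s)=\int_{U^r\setminus\Sp_{2r}}W(g)\omega_{\psi^{-1}}(g)\phi(e_r)\xi_s(g,I_r)\,dg$. I would fix an irreducible generic representation $(\tau,V_\tau)$ of $\GL_r(F)$ and a vector $v\in V_\tau$; for sufficiently large $i$, the sections $\xi_s^{i,v}\in \tilde V(s,\tau,\psi^{-1})$ constructed in $\S$3.1 are available, and I would test against the Schwartz function $\phi_k$ from $\S$3.3 (with $k\ge m$). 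The strategy is to compute both the pre- and post-side integrals for the pair $(\pi,\tau)$ and for $(\pi_0,\tau)$ on these data, show the pre-sides agree, deduce agreement of the post-sides from the functional equation and $\CC(r)$, and finally apply Proposition \ref{prop3.6} (Jacquet-Shalika) to extract the claimed vanishing.

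For the pre-side I would restrict to the open dense double coset $U^r\setminus N_rM_r\ov N_r\cong U_{\GL_r}\setminus\GL_r\times\ov N_r$. The support condition (\ref{eq3.1}) on $\xi_s^{i,v}$ forces $\ov u\in \ov N_{r,i}$, and for $i\ge m$ this subgroup sits in $H_m$ with $\psi_m$ trivial on it, so Eq.(\ref{eq2.1}) eliminates the $\ov u$-dependence of $W_m^f$. The pre-side integrand is thus a function of $a\in \GL_r(F)$ whose only Whittaker-ingredient is $W_m^f(\bm_r(a))$, and Lemma \ref{lem4.3}(1) gives $W_m^f(\bm_r(a))=W_m^{f_0}(\bm_r(a))$ for large $m$. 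Hence $\Psi(W_m^f,\phi_k,\xi_s^{i,v})=\Psi(W_m^{f_0},\phi_k,\xi_s^{i,v})$, and combining with the local functional equation and $\CC(r)$ yields $\Psi(W_m^f,\phi_k,\tilde\xi_{1-s}^{i,v})=\Psi(W_m^{f_0},\phi_k,\tilde\xi_{1-s}^{i,v})$.

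For the post-side I would use the open dense cell $U^r\setminus N_rM_rw_rN_r\cong U_{\GL_r}\setminus\GL_r w_rN_r$ and write $g=\bm_r(a)w_r\bn_r(b)=\bm_r(a)\tilde w_r\bn_r(b)$ (using $\tilde w_r=w_r$). On the open compact set $\bn_r(b)\in N_r\cap H_k$, the Weil representation formulas together with Lemma \ref{lem3.5}(1) give
\begin{equation*}
\omega_{\psi^{-1}}(\bm_r(a)w_r\bn_r(b))\phi_k(e_r)=\psi_U^{-1}(\bn_r(b))\gamma_{\psi^{-1}}(\det(a))|\det(a)|^{1/2}(\omega_{\psi^{-1}}(w_r)\phi_k)(e_ra),
\end{equation*}
while Eq.(\ref{eq2.1}) gives $W_m^f(\bm_r(a)\tilde w_r\bn_r(b))=\psi_U(\bn_r(b))W_m^f(\bm_r(a)\tilde w_r)$, so the $\psi_U$-characters cancel. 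Substituting Eq.(\ref{eq3.2}) for $\tilde\xi_{1-s}^{i,v}$, subtracting the $f_0$-analogue, and using Eq.(\ref{eq5.1}) to identify $W_m^f(\bm_r(a)\tilde w_r)-W_m^{f_0}(\bm_r(a)\tilde w_r)=B_m(\bm_r(a)\tilde w_r,f_{\tilde w_r})$, the equality of post-sides collapses to
\begin{equation*}
0=C\int_{U_{\GL_r}\setminus\GL_r}B_m(\bm_r(a)\tilde w_r,f_{\tilde w_r})(\omega_{\psi^{-1}}(w_r)\phi_k)(e_ra)W_v^*(a)|\det(a)|^{-s-\kappa}\,da,
\end{equation*}
for a nonzero constant $C$ and an explicit exponent $\kappa$. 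Since this holds for every irreducible generic $\tau$ and every $v\in V_\tau$, and since Lemma \ref{lem4.3}(4) together with the Schwartz property of $\omega_{\psi^{-1}}(w_r)\phi_k$ verifies the compactness-modulo-$U_{\GL_r}$ hypothesis, Proposition \ref{prop3.6} yields the desired vanishing.

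The main obstacle is the bookkeeping: confirming that the open compact set of $b$'s on which all the simultaneous simplifications occur is large enough that the $N_r$-integral contributes only a harmless nonzero constant, and keeping track of the factors from the Rao cocycle, the genuine character $\gamma_{\psi^{-1}}$, and the relative Haar measures between the two Bruhat cells $N_rM_r\ov N_r$ and $N_rM_rw_rN_r$. These calculations are parallel to those in Proposition \ref{prop4.4}, specialized to the $n=r$ case where the integrations over $R^{r,n}$ and $X_n$ collapse into the evaluation $\phi_k(e_r)$ appearing in the zeta integral.
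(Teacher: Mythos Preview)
Your overall strategy is correct and matches the paper's, but there is a real gap in the post-side computation, not merely ``bookkeeping.'' You simplify $W_m^f(\bm_r(a)\tilde w_r\bn_r(b))$ and the Weil-representation factor only on the compact set $\bn_r(b)\in N_r\cap H_k$, then take the difference; you never explain what happens to the $N_r$-integral outside this set. Neither $\tilde\xi_{1-s}^{i,v}$ nor $\omega_{\psi^{-1}}(w_r\bn_r(b))\phi_k(e_ra)$ vanishes for large $b$, and the Howe relation Eq.(\ref{eq2.1}) for $W_m^f$ only applies on $H_m$, so the individual integrands do not simplify globally. The paper resolves this by reversing the order of operations: first subtract and invoke Eq.(\ref{eq5.1}) to write the difference of the two post-side integrals as a single integral with integrand $B_m(\bm_r(a)w_r u,f_{\tilde w_r})\cdot(\text{rest})$, and only then restrict the $u$-integration by appealing to Lemma~\ref{lem4.3}(3) (valid for $n=r$, since $E_r^-=N_r$), which gives $B_m(\bm_r(a)w_r u,f_{\tilde w_r})=0$ whenever $u\notin N_r\cap U_m$. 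This is precisely the ``stability'' step that collapses the $N_r$-integral to a compact domain and is the substantive ingredient you are missing, not a routine measure computation.

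A secondary issue: the correct compact set is $N_r\cap U_m$, not $N_r\cap H_k$. Eq.(\ref{eq2.1}) for $B_m(\cdot,f_{\tilde w_r})$ requires $u\in H_m$; since $k\ge m$ one has $N_r\cap U_m\subset N_r\cap U_k$, so Lemma~\ref{lem3.5}(1) for $\phi_k$ still applies on this smaller set. Once you restrict to $N_r\cap U_m$ via Lemma~\ref{lem4.3}(3), the character $\psi_U(u)$ from the Howe relation and $\psi_U^{-1}(u)$ from Lemma~\ref{lem3.5}(1) cancel, the $u$-integral yields $\vol(N_r\cap U_m)$, and the rest of your argument (substituting Eq.(\ref{eq3.2}), applying Proposition~\ref{prop3.6} with Lemma~\ref{lem4.3}(4)) goes through exactly as you outline.
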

\begin{proof}
The proof is similar to that of Proposition \ref{prop4.4}. In the following proof, we still write the Weil representation $\omega_{\psi^{-1}}$ as $\omega$ for simplicity.

We take the integer $m$ large such that Lemma \ref{lem4.3}(3) and Eq.(\ref{eq5.1}) hold. We then take a positive integer $k$ such that $k\ge m$.  Let $(\tau,V_\tau)$ be an irreducible generic representation of $\GL_r(F)$. Fix a vector $v\in V_\tau$. Let $i$ be a positive integer with $i\ge\max\wpair{m,i_0(v),I(N_{r}\cap U_m,v)}$ and such that $\omega(s(\bar u))\phi_k=\phi_k$ for all $\bar u\in \ov{N}_{r,i}$. We then have a section $\xi_s^{i,v}\in \tilde V(s,\tau,\psi^{-1})$.

As in the proof of Proposition \ref{prop4.1},  we have 
$$\Psi(W_m^f,\phi_k, \xi_s^{i,v})=\Psi(W_m^{f_0},\phi_k,\xi_s^{i,v}).$$
Thus from the assumption on the local gamma factors and the local functional equation, we can get 
\begin{equation}\label{eq5.2}\Psi(W_m^f,\phi_k, \tilde\xi_{1-s}^{i,v})=\Psi(W_m^{f_0},\phi_k,\tilde\xi_{1-s}^{i,v}).\end{equation}
Similar as in the proof of Proposition \ref{prop4.1}, we have
\begin{align*}
&\Psi(W_m^f,\phi_k,\tilde \xi_{1-s}^{i,v})\\
=&\int_{U_{\GL_r}\setminus \GL_r}\int_{N_r}W_m^{f}(\bm_r(a)w_r u)\omega(\bm_r(a)w_r u)\phi_k(e_r)\\
\quad \cdot &\tilde \xi_{1-s}^{i,v}(\bm_r(a)w_ru,I_r)|\det(a)|^{-(r+1)}duda.
\end{align*}
There is a similar expression for $\Psi(W_m^{f_0},\phi_k,\tilde \xi_{1-s}^{i,v})$. By Eq.(\ref{eq5.1}) and Eq.(\ref{eq5.2}), we can get 
\begin{align}
0&\equiv \int_{U_{\GL_r}\setminus \GL_r}\int_{N_r}B_m(\bm_r(a)w_r u, f_{\tilde w_r}) \omega(\bm_r(a)w_r u)\phi_k(e_r) \label{eq5.3}\\
&\quad \cdot \tilde \xi_{1-s}^{i,v}(\bm_r(a)w_r u,I_r)|\det(a)|^{-(r+1)}duda. \nonumber
\end{align}
By Lemma \ref{lem4.3}(3), if $u\notin U_m\cap N_r$, we then get $$B_m(\bm_r(a)w_r u, f_{\tilde w_r})=0. $$
For $u\in U_m\cap N_r$, by Eq.(\ref{eq2.1}), we have 
$$B_m(\bm_r(a)w_r u, f_{\tilde w_r})=\psi_U(u)B_m(\bm_r(a)w_r , f_{\tilde w_r}).$$
By Lemma \ref{lem3.5}(1), for $u\in N_r\cap U_m\subset N_r\cap U_k$, we have 
$$\omega(u)\phi_k=\psi_U^{-1}(u)\phi_k. $$
On the other hand, by Eq.(\ref{eq3.2}) and our assumption on $i$, for $u\in N_r\cap U_m$, we have
\begin{align*}&\omega(\bm_r(a)w_r u)\phi_k(e_r) \tilde \xi_{1-s}^{i,v}(\bm_r(a)w_n u)\\
=&\vol(\ov{N}_{r,i})|\det(a)|^{(r+3)/2-s}\omega(w_r)\phi_k(e_ra)W^*_v(a).
\end{align*}
Now Eq.(\ref{eq5.3}) becomes 
$$\int_{U_{\GL_r}\setminus \GL_r} B_m(\bm_r(a)w_r,f_{\tilde w_r})\omega(w_r)\phi_k(e_ra)W_v^*(a) |\det(a)|^{-s+(-r+1)/2}da=0.$$
The above equation is true for all irreducible generic representations $(\tau,V_\tau)$ and all $v\in V_\tau$. Then by Proposition \ref{prop3.6} and Lemma \ref{lem4.3}(4), we get $$B_m(\bm_r(a)w_r,f_{\tilde w_r})\omega(w_r)\phi_k(e_ra)=0$$ for all $a\in \GL_r(F)$.
\end{proof}

We now can finish the proof of Theorem \ref{thm2.10}.
\begin{proof}[Proof of Theorem $\ref{thm2.10}$]
We are going to show that the condition $\CC(r)$ implies that $B_m(g,f)=B_m(g,f_0)$ for sufficiently large $m$ depending only on $f,f_0$. By Eq.(\ref{eq5.1}), it suffices to show that $B_m(g,f_{\tilde w_r})=0$ for all $g\in G$. By Theorem \ref{thm2.8}, it suffices to show that $B_m(aw,f_{\tilde w_r})=0$ for all $a\in A_w$ and all $w\in B(G)$ with $w\ge \tilde w_r=w_r$.

By Lemma \ref{lem2.6}, for $w\in B(G), w\ge w_r$, we have $\theta_w\subset \theta_{w_r}=\Delta-\wpair{\alpha_r}$. In particular, $\beta:=-w(\alpha_r)>0$. Let $t=\bm_r(\diag(a_1,\dots,a_r))\in A$ and $x\in \fp^{(2\rht(\beta)+1)m}$. We have $\bx_{-\beta}(x)\in U_{-\gamma,m}$ by Lemma \ref{lem2.1}. On the other hand, we have 
$$tw\bx_{-\beta}(x)=t\bx_{\alpha_r}(cx) w=\bx_{\alpha_r}(a_r^2 cx)tw,$$
where $c\in \wpair{\pm 1}$. Thus by Eq.(\ref{eq2.1}), we have 
$$B_m(tw,f_{\tilde w_r})=\psi(a_r^2 cx)B_m(tw,f_{\tilde w_r}). $$
Thus if $B_m(tw,f_{\tilde w_r})\ne 0$, we get $\psi(a_r^2 cx)=1$ for all $x\in \fp^{(2\rht(\beta)+1)m}$ and thus $a_r^2\in \fp^{-(2\rht(\beta)+1)m}$. Equivalently, if $a_r^2\notin \fp^{-(2\rht(\beta)+1)m}$, we have $B_m(tw,f_{\tilde w_r})=0$.

On the other hand, we can write $w=w'w_n$ with $w'\in \bW_{\GL_r}$. Here $\bW_{\GL_r}$ is the Weyl group of $\GL_r$ and any $w'\in \bW_{\GL_r}$ has a representative of the form $\bm_r(a_0)$ with $a_0\in \GL_r(F)$ a permutation matrix. Let $a=\diag(a_1,\dots,a_r) a_0$. Thus we can write $tw=\bm_r(a)w_n$. 

We now assume that $a_r^2\in \fp^{-(2\rht(\beta)+1)m}$. Note that $a_r$ is in the last row of the matrix $a$. By Lemma \ref{lem3.5}(2), we can take $k$ large such that $\omega(w_r)\phi_k(e_ra)\ne 0$. Then by Proposition \ref{prop5.2}, we get 
$$B_m(tw,f_{\tilde w_r})=B_m(\bm_r(a)w_r, f_{\tilde w_r})=0.$$
This finishes the proof of Theorem \ref{thm2.10}. 
\end{proof}
Note that, throughout the above proof, we fixed an unramified character $\psi$. If $\pi$ and $\pi_0$ are  generic with respect to $\psi_U$ for a ramified character $\psi$, we only need to modify the definition of Howe vectors and similar proofs go through.

\section{Local converse theorem for unitary groups}\label{sec8}

\subsection{The local converse theorem for $\RU(r,r)$}

Let $E/F$ be a quadratic extension of local fields and let $\RU(r,r)$ be the unitary group of rank defined by the skew-Hermitian form $\begin{pmatrix}&J_r\\ -J_r& \end{pmatrix}$. In \cite{BAS}, global and local zeta integrals for $\RU(r,r)\times \GL_n(E)$ for $1\le n\le r$ were studied. The local integrals are quite similar to that of the symplectic groups which were reviewed in Section \ref{sec3}. We give a very brief review of that. Let $\mu$ be a fixed character of $E^\times$ such that $\mu|_{F^\times}$ is the class field theory character. The skew Hermitian form defines an embedding $\RU(r,r)\incl \Sp_{4r}(F)$.  The character $\mu$ defines a splitting of the double cover $\wt{\Sp}_{4r}(F)\ra \Sp_{4r}(F)$ over $\RU(r,r)$, i.e., defines an embedding $\RU(r,r)\ra \wt{\Sp}_{4r}(F)$. Thus for a nontrivial additive character $\psi$ of $F$, one has a Weil representation $\omega_{\mu,\psi}$ of $\RU(r,r)$ on $\CS(E^r)$.

Note that the Levi subgroup of the Siegel parabolic subgroup $P_r$ of $\RU(r,r)$ is isomorphic to $\GL_r(E)$. Given a generic irreducible representation $\tau$ of $\GL_r(E)$, we can consider the induced representation $I(s,\tau)=\Ind_{P_r}^{\RU(r,r)}(\tau\otimes |\det|_E^{s-1/2})$. Fix a Whittaker functional $\lambda$ of $\tau$. For $f_s\in I(s,\tau)$, we can consider the function $\xi_s(g,a)=\lambda(\tau(a)f_s(g))$ on $\RU(r,r)\times \GL_r(E)$. Let $V(s,\tau)$ be the space of $\xi_s$.

Let $U=U^r$ be the upper triangular unipotent subgroup of $\RU(r,r)$ and let $\psi_U$ be a generic character of $U$. Let $\pi$ be an irreducible $\psi_U$-generic representation of $\RU(r,r)$ and $\tau$ be a generic irreducible smooth representation of $\GL_n(E)$ with $1\le n\le r$. For $W\in \CW(\pi,\psi_U)$, $\xi_s\in V(s,\tau)$ and $\phi\in \CS(E^r)$, the Shimura type integral for $\pi\times \tau$ is defined by 
\begin{align}
&\Psi(W,\phi,\xi_s)\\
=&\left\{\begin{array}{lll}\int_{U^n\setminus \RU(n,n)}\int_{R^{r,n}}\int_{X_n} W(w_{r-n,n}^{-1}(rxg)w_{r-n,n})\omega_{\mu,\psi^{-1}}(g)\phi(x)\xi_s(g,I_n)dxdrdg, & n<r,\\
                                                              \int_{U^r\setminus \RU(r,r)} W(g)\omega_{\mu,\psi^{-1}}(g)\phi(e_r)\xi_s(g,I_r)dg, & n=r.        \end{array}\right. \nonumber\end{align}

Here $R^{r,n}, X_n, w_{r-n,n}$ can be defined similarly as in the symplectic group case.

Following the uniqueness of Fourier-Jacobi model in the unitary group case \cite{GaGP,Su}, one can define local gamma factors via local functional equations: there exists a meromorphic function $\gamma(s,\pi\times (\tau \mu),\psi)$ such that 
$$\Psi(W,\phi,M(s,\tau)\xi_s)=\gamma(s,\pi\times (\tau \mu),\psi )\Psi(W,\phi,\xi_s),$$
for all $W\in \CW(\pi,\psi_U),\phi\in \CS(E^n),\xi_s\in V(s,\tau)$. Here $M(s,\tau)$ is the standard intertwining operator. 

The same proof as in the symplectic case will give the local converse theorem in the unitary group case. We just record the theorem here. 
\begin{thm}
Let $\pi,\pi_0$ be two $\psi_U$-generic irreducible smooth representations of $\RU(r,r)$ with the same central character. If $\gamma(s,\pi\times(\tau \mu),\psi)=\gamma(s,\pi_0\times (\tau\mu) ,\psi)$ for all irreducible generic representations of $\GL_n(E)$ with $1\le n\le r$, then $\pi\cong \pi_0$.
\end{thm}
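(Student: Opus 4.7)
The plan is to run the same argument as for $\Sp_{2r}(F)$, mutatis mutandis, throughout Sections \ref{sec4}--\ref{sec7}, with the group $\Sp_{2r}(F)$ replaced by $\RU(r,r)$, the field $F$ replaced by $E$ in the $\GL$ factors, and the Weil representation $\omega_{\psi^{-1}}$ of $\wt{\Sp}_{2n}(F)$ replaced by $\omega_{\mu,\psi^{-1}}$ of $\RU(n,n)$ (which is an honest representation of $\RU(n,n)$ since $\mu$ gives a splitting of $\wt{\Sp}_{4n}(F)\to \Sp_{4n}(F)$ over $\RU(n,n)$). In particular there is no metaplectic double cover to worry about, which makes several steps actually cleaner than in the symplectic case. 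Fix $\pi,\pi_0$ as in the statement, fix $f\in\CM(\pi),f_0\in\CM(\pi_0)$ with $W^f(1)=W^{f_0}(1)=1$, and form the partial Bessel functions $B_m(g,f),B_m(g,f_0)$ exactly as in $\S\ref{sec4}$, using congruence subgroups $K_m^{\RU(r,r)}$ and an analogous element $e_m$ in the split torus of $\RU(r,r)$. Lemma \ref{lem2.1} and Lemma \ref{lemma22} carry over verbatim, and the Bruhat-theoretic material of $\S\ref{sec4}$ (Proposition \ref{prop2.5}, Lemma \ref{lem2.6}, Lemma \ref{lem2.7}, Theorem \ref{thm2.8}, Corollary \ref{cor2.9}) holds for any connected reductive group since the proofs in \cite{CST17} are general; so we again obtain
\[
B_m(g,f)-B_m(g,f_0)=\sum_{i=1}^{r}B_m(g,f_{\tilde w_i}),
\]
with $\tilde w_i$ the analogues in $\RU(r,r)$ of the Weyl elements $\tilde w_n^r$ in $\Sp_{2r}$, for $m$ large.

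The inductive step (the analogue of Proposition \ref{prop4.1}) proceeds as in $\S\ref{sec6}$. I first construct sections $\xi_s^{i,v}\in V(s,\tau)$ in the (non-metaplectic) induced representation of $\RU(n,n)$ by the same Bruhat-cell recipe as in Lemma \ref{lem3.2}, except that the factors $\gamma_{\psi^{-1}}^{-1}(\det(a))$ and the cocycle $(\,,\,)_F$ disappear; only $\mu$ intervenes through $\omega_{\mu,\psi}$ on the Weil side. Lemma \ref{lem3.1} and Lemma \ref{lem3.3} then hold with the same proofs (now working over $E$ on the Levi). The Schwartz functions $\phi_{m,r}^n\in\CS(E^n)$ are defined by the same formula using the ring of integers of $E$, and Lemma \ref{lem3.4} and Lemma \ref{lem3.5} carry over by identical computations for $\omega_{\mu,\psi^{-1}}$ using the standard formulas for the unitary Weil representation. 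Using all this, the analogue of Lemma \ref{lem4.3} (in particular the key ``stability'' assertion (3)) goes through verbatim: the Bruhat-interval computation identifying the top element $w_{\max}=w_\ell^{L_n}\tilde w_n$ in $Q_n\tilde w_n Q_n$, the check that $Q_n\tilde w_n Q_n\cap \Omega_{\tilde w_i}=\emptyset$ for $i\ge n+1$, and the compact support control (4) use only root data and do not feel the difference between $\Sp_{2r}$ and $\RU(r,r)$ (both have the same relative root system $C_r$). The main unfolding calculation of Proposition \ref{prop4.4} then yields, via Proposition \ref{prop3.6} applied to $\GL_n(E)$,
\[
B_m(\bt_n(a)\tilde w_n, f_{\tilde w_n})=0,\qquad a\in \GL_n(E),
\]
assuming $\CC(n)$; the induction closes exactly as in the proof of Proposition \ref{prop4.1}.

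Under $\CC(r-1)$ we are left with a single remaining contribution $B_m(g,f_{\tilde w_r})$, and we repeat the argument of $\S\ref{sec7}$: run the unfolding with $\phi_k=\phi_{k,r}^r\in\CS(E^r)$ and the section $\xi_s^{i,v}$, use the equality of gamma factors for $\GL_r(E)$-twists (this is where $\CC(r)$ and the twisting character $\mu$ enter, since the twist is by $\tau\mu$), and apply Proposition \ref{prop3.6} and the support/compactness statement of Lemma \ref{lem4.3}(4) to conclude
\[
B_m(\bm_r(a) w_r, f_{\tilde w_r})\,\omega_{\mu,\psi^{-1}}(w_r)\phi_k(e_r a)=0,\qquad a\in \GL_r(E).
\]
A final root-space argument identical to the proof of Theorem \ref{thm2.10} -- control of $|a_r|$ from $U_{-\beta,m}$ invariance and non-vanishing of $\omega_{\mu,\psi^{-1}}(w_r)\phi_k(e_r a)$ for $k$ large, via the unitary analogue of Lemma \ref{lem3.5}(2) -- forces $B_m(g,f_{\tilde w_r})=0$ for all $g\in \RU(r,r)$. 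By irreducibility and uniqueness of the Whittaker model, $\pi\cong\pi_0$.

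The only substantive things to verify are therefore (i) that the unitary Weil representation formulas make the analogues of Lemma \ref{lem3.4} and Lemma \ref{lem3.5} hold with the obvious integral-ring replacements, and (ii) that no constants depending on $\mu$ obstruct the final functional-equation comparison. Neither is a real obstacle: (i) is a direct verification using the standard Schr\"odinger model with the $\mu$-splitting of \cite{BAS}, and (ii) is automatic because both sides of the local functional equation for $\pi$ and $\pi_0$ are twisted by the \emph{same} character, so the $\mu$-dependent factors cancel. The most conceptually delicate point -- and it is the same as in the symplectic case -- is the ``stability'' assertion Lemma \ref{lem4.3}(3) for $f_{\tilde w_n}$; this is precisely where the Cogdell--Shahidi--Tsai machinery of partial Bessel functions is indispensable, but since that machinery is group-agnostic, no new difficulty arises for $\RU(r,r)$.
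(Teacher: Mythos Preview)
Your proposal is correct and follows precisely the approach the paper itself indicates: the paper does not give a separate proof but simply states that ``the same proof as in the symplectic case will give the local converse theorem in the unitary group case,'' and your outline spells out exactly how each ingredient of Sections~\ref{sec4}--\ref{sec7} transports to $\RU(r,r)$, correctly noting that the absence of the metaplectic cover actually simplifies matters. One small point: your argument (like the paper's) uses $f\in\CM(\pi)\subset C_c^\infty(G,\omega)$, which requires $\pi,\pi_0$ to be supercuspidal; the word ``smooth'' in the stated theorem should be read as ``supercuspidal'' in line with Theorem~\ref{thm2.10}.
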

Some small ranked case of the above theorem has been worked out in \cite{Zh1,Zh2}. Recently, K. Morimoto proved the above theorem using descent theory, see \cite[Theorem 9.4]{M}. It should be noted that the local gamma factors Morimoto used are arising from Langlands-Shahidi method. It is still not known that the local gamma factors we used are essentially the same with Langlands-Shahidi gamma factors, although it is expected to be true like the $\Sp_{2n}$ case.

\subsection{Unitary group at split places} 
In this final subsection, we explain how our method might give a new proof of the Jacquet's local converse conjecture for $\GL_{2n}$.

Let $E/F$ be a quadratic extension of number fields and let $\RU(r,r)$ be the unitary group associated with $E/F$ defined by the skew-Hermitian form as in last subsection. In \cite{BAS}, the authors considered a global zeta integral associated with a generic cusp automorphic representation $\pi$ of $\RU(r,r)(\BA_F)$, a generic cuspidal automorphic representation $\tau$ of $\GL_n(\BA_E)$ and a Bruhat-Schwartz function on the space of $\BA_E^n$. This integral is Eulerian, represent the local L-function of $\RU(r,r)(F_v)\times \GL_n(E_v)$ at unramified places, and give the local zeta integral considered in last subsection at inert places. If $v$ is a place of $F$ which splits over $E$, then $E_v=F_v^2$, $\RU(r,r)(F_v)=\GL_{2r}(F_v)$ and $\GL_n(E_v)=\GL_n(F_v)\oplus \GL_n(F_v)$. At such a place $v$, the representation $\tau_v$ is a pair $(\tau_{1,v},\tau_{2,v})$ of generic representations of $\GL_n(F_v)$.  Thus at a split place $v$, the local functional equation of the corresponding local zeta integral of \cite{BAS} gives a local gamma factor $\gamma(s, \pi_v,\tau_{1,v},\tau_{2,v},\psi_v)$. At the unramified places, one has $\gamma(s,\pi_v,\tau_{1,v},\tau_{2,v},\psi)=\gamma(s,\pi_v\times \tau_{1,v},\psi_v)\gamma(s,\tilde\pi_v\times \tau_{2,v},\psi)$ up to a normalizing factor, where $\tilde \pi_v$ is the contragredient representation of $\pi_v$, and $\gamma(s,\pi_v\times \tau_{1,v},\psi_v)$ is the standard local gamma factor for $\GL_{2r}\times \GL_n$. One would expect this is true in general. 

Now assume $F$ is a $p$-adic field and $\pi,\pi_0$ are two irreducible generic supercuspidal representation of $\GL_{2r}(F)$ with the same central character. Our method of the proof of the local converse theorem should also give a local converse theorem of $\GL_{2r}$ using the local gamma factors $\gamma(s,\pi,\tau_1,\tau_2,\psi) $, i.e., if $\gamma(s,\pi,\tau_1,\tau_2,\psi)=\gamma(s,\pi_0,\tau_1,\tau_2,\psi)$ for all pairs $(\tau_1,\tau_2)$ of irreducible generic representations of $\GL_n(F)$ with $1\le n\le r$, then we have $\pi\cong \pi_0$. This would give a different proof of Jacquet's local converse conjecture for $\GL_{2n}$ modulo the expected property $ \gamma(s,\pi,\tau_1,\tau_2,\psi)=\gamma(s,\pi\times \tau_1,\psi)\gamma(s,\tilde \pi\times \tau_2,\psi)$. To obtain a proof of the Jacquet's conjecture for $\GL_{2r+1}$, one should consider local gamma factors of the local zeta integrals at the split places of the unitary group $\RU_{2r+1}$ of \cite{BAS}. The author is working on these projects. This is in fact the original motivation of author's work on the local converse theorem for unitary group.

\end{document}